   \DeclareSymbolFont{symbolsC}{U}{txsyc}{m}{n}
   \DeclareMathSymbol{\strictif}{\mathrel}{symbolsC}{74}
   \DeclareMathSymbol{\boxright}{\mathrel}{symbolsC}{128}
\newtheorem{df}{Definition}[section]  
\newtheorem{teo}[df]{Theorem}
\newtheorem{coro}[df]{Corollary}
\newtheorem{lm}[df]{Lemma}
\newtheorem{prop}[df]{Proposition}
\newtheorem{exm}[df]{Example}
\newtheorem{prp}[df]{Proposition}
\newtheorem{fct}[df]{Fact}
\renewenvironment{proof}{\paragraph{Proof.} }{\hfill $\square$ \\}
\newenvironment{lateproof}[1] {\paragraph{{\rm\textbf{Proof of {#1}\quad}}}}{\hfill$\square$ \\}
\newcolumntype{C}[1]{>{\centering\arraybackslash}p{#1}}
\newcolumntype{L}[1]{>{\arraybackslash}p{#1}}
\newcommand{\dep}[2]{=\hspace{-3pt}({#1};{#2})}
\newcommand{\depc}[1]{=\hspace{-3pt}({#1})}
\newcommand{\con}[1]{=\hspace{-3pt}({#1})}
\newcommand{\ded}{\vdash_\sigma\xspace}
\newcommand{\cf}{\boxright}
\newcommand{\dblsetminus}{\mathbin{{\setminus}\mspace{-5mu}{\setminus}}}
\newcommand{\COV}{\ensuremath{\mathcal{CO}_{\footnotesize\mathsmaller{\dblsetminus\hspace{-0.23ex}/}}[\sigma]\xspace}}
\newcommand{\COv}{\ensuremath{\mathcal{CO}_{\footnotesize{\setminus}\mspace{-5mu}{\setminus}\hspace{-0.23ex}/}\xspace}}
\newcommand{\COd}{\ensuremath{\mathcal{COD}\xspace}}
\newcommand{\COD}{\ensuremath{\mathcal{COD}[\sigma]\xspace}}
\newcommand{\CO}{\ensuremath{\mathcal{CO}[\sigma]\xspace}}
\newcommand{\Co}{\ensuremath{\mathcal{CO}\xspace}}
\newcommand{\F}{\mathcal{F}}
\newcommand{\G}{\mathcal{G}}
\newcommand{\I}{\mathcal{I}}
\newcommand{\K}{\mathcal{K}}
\newcommand{\End}{\mathrm{En}}
\newcommand{\Exo}{\mathrm{Ex}}
\newcommand{\Con}{\mathrm{Cn}}
\newcommand{\Dom}{\mathrm{Dom}}
\newcommand{\Ran}{\mathrm{Ran}}
\newcommand{\ASS}{\ensuremath{\mathbb{A}_\sigma}}
\newcommand{\FUN}{\ensuremath{\mathbb{F}_\sigma}}
\newcommand{\FUNr}{\ensuremath{\mathbb{F}^0_\sigma}}
\newcommand{\SEM}{\ensuremath{\mathbb{S}\mathbbm{em}_\sigma}}
\newcommand{\GCT}{\ensuremath{\mathbb{G}_\sigma}}
\newcommand{\CT}{\ensuremath{\mathbb{C}_\sigma}}
\newcommand{\SET}[1]{\mathbf{#1}}
\newcommand{\vvee}{\raisebox{1pt}{\ensuremath{\mathop{\,\mathsmaller{\dblsetminus\hspace{-0.23ex}/}\,}}}}
\newcommand{\bigvvee}{\ensuremath{\mathop{\mathlarger{\mathlarger{\mathbin{{\setminus}\mspace{-5mu}{\setminus}}\hspace{-0.33ex}/}}}}\xspace}
\newcommand{\todoy}[1]{}
\newcommand{\todob}[1]{}
\newcommand{\corrb}[1]{#1}
\theoremstyle{definition}
\theoremstyle{remark}
\journal{
}
\begin{document}

\begin{frontmatter}

 \title{Characterizing counterfactuals and dependencies over causal and generalized causal teams}

  \author{Fausto Barbero
  }
  \address{Department of Philosophy, History and Art Studies,  University of Helsinki\\
    PL 24 (Unioninkatu 40),
    00014 University of Helsinki, Finland\\
    fausto.barbero@helsinki.fi\\
    }%
  \author{Fan Yang}
  \address{Department of Mathematics and Statistics, University of Helsinki\\
    PL 68 (Pietari Kalmin katu 5),
    00014 University of Helsinki, Finland\\
    fan.yang.c@gmail.com\\
    }


\begin{abstract}
 We analyze the  causal-observational languages that were introduced in Barbero and Sandu (2018), which allow discussing interventionist counterfactuals and functional dependencies in a unified framework.
In particular, we systematically investigate the expressive power of these languages in causal team semantics, and we provide complete natural deduction calculi for each language. Furthermore, we introduce a generalized semantics which allows representing uncertainty about the causal laws, and analyze the expressive power and proof theory of the causal-observational languages over this enriched semantics. 
\end{abstract}


\begin{keyword}
Interventionist counterfactuals \sep causal teams \sep dependence logic \sep team semantics

\MSC[2010] 03B60 \sep 62D20
\end{keyword}

\end{frontmatter}



\section{Introduction}

Counterfactual conditionals express the modality of \emph{irreality}: they describe what \emph{would} or \emph{might} be the case in circumstances that diverge from the actual state of affairs. Pinning down the exact meaning and logic of counterfactual statements has been the subject of a large literature (see e.g. \cite{Sta2019} for a survey). We are interested here in a special case: the \emph{interventionist} counterfactuals, which emerged from the literature on causal inference (\cite{SpiGlySch1993,Pea2000,Hit2001,Woo2003}). Under this reading, a conditional $\SET X=\SET x\cf\psi$ expresses that $\psi$ would hold if we were to intervene on the given system, by subtracting the variables $\SET X$ to their current causal mechanisms and forcing them to take the values $\SET x$. %

The \emph{logic} of interventionist counterfactuals has been mainly studied in the semantic context of \emph{deterministic causal models} (\cite{GalPea1998,Hal2000,Bri2012,Zha2013}), which consist of an assignment of values to variables together with a system of \emph{structural equations} that describe the causal connections among variables. 
In \cite{BarSan2018}, causal models were generalized to \emph{causal teams}, in the spirit of \emph{team semantics} (\cite{Hod1997,Vaa2007}), by considering a \emph{set} of assignments (a ``{\em team}'')  instead of a single assignment. This 
opens the possibility of describing e.g. \emph{uncertainty}, \emph{observations}, and \emph{dependencies} (\cite{Vaa2007,Eng2012,GraVaa2013,Gal2012,Vaa2008,
YanVaa2016,GraHoe2018,BarSan2018,BarSan2019}).
In this paper we also consider a further generalization of causal team semantics. While causal teams only allow describing uncertainty about the current state of the variables (the assignment), their generalized version also encode uncertainty about the causal laws.


One of the main reasons for introducing causal teams was the possibility of comparing,  within a unified semantic framework, the logic of dependencies of causal nature (those definable in terms of interventionist counterfactuals) against that of contingent dependencies (those that occur e.g. in sets of empirical data, and which have been thoroughly studied in the literature on team semantics).
 In the context of team semantics the most commonly studied type of contingent dependency  is functional determinacy, which is represented by the so-called \emph{dependence atoms} $\dep{\SET X}{Y}$, whose intended meaning is that the values of the variables in $\SET X$ determine the value of $Y$. More precisely, this means that assignments in the team 
 which agree over $\SET X$ also agree over $Y$. On the causal side, there is a large variety of notions of causal dependence; see for example \cite{Woo2003} for a taxonomy of the most common ones. A notable example is the notion of \emph{direct cause}. A variable $X$ is a direct cause of another variable  $Y$ if there is a way of manipulating $X$, while holding fixed all variables distinct from $Y$, so that the value of $Y$ will change. This can be made precise and expressed as a boolean combination of counterfactuals: 
\[
\bigvee_{\SET z,x\neq x',y \neq y'} \big( (\SET Z = \SET z \land X=x)\cf Y=y \land  (\SET Z = \SET z \land X=x')\cf Y=y' \big)
\]
where $\SET Z$ is the set of all variables distinct from $X$ and $Y$, and the disjunction ranges over the possible values of $\SET z,x,x',y,y'$ (with $x\neq x'$, $y\neq y'$).
\cite{BarSan2019} provided a complete axiomatization for a language $\Co$ which describes causal dependencies (but no  contingent dependencies)\footnote{More precisely, what was axiomatized was a class of languages $\CO$, each determined by a \emph{signature} $\sigma$. The signatures are as in \cite{Bri2012}.}; the resulting logic is a fragment of the system given by Briggs in \cite{Bri2012}. More generally, \cite{BarSan2018} and \cite{BarSan2019} give anecdotal evidence of the interactions between the two types of dependence, but offer no complete axiomatizations for languages that also involve contingent dependencies. 
In this paper we fill this gap in the literature by providing complete deduction systems (in natural deduction style) for the languages $\COd$ and $\COv$ (from \cite{BarSan2018}), which 
enrich the basic counterfactual language, respectively, with atoms of functional dependence $\dep{\SET X}{Y}$, 
or with the global disjunction $\vvee$ (in terms of which functional dependence is definable).  We also give semantical characterizations, for $\COd$, $\COv$ and the  basic counterfactual language $\Co$, in terms of definability of classes of causal teams.

All these results will also be extended to the generalized causal team semantics. We point out that the methods developed in the literature (\cite{CiaRoe2011,YanVaa2016}) for proving completeness results in team semantics can be smoothly adapted to this generalized semantics, whereas these methods do not directly apply to the basic causal team semantics. The technical hindrance here is the fact that it is not always possible to define reasonably the \emph{union} of two causal teams; this leads to the failure of the disjunction property for $\vvee$, which is a crucial element for the methods of \cite{CiaRoe2011,YanVaa2016}. In order to obtain, nonetheless, completeness results over causal team semantics, our strategy is to add to the systems for the generalized semantics further rules or axioms that characterize the property of being a causal team (i.e. of encoding \emph{certainty} about the causal connections), or more precisely, the property of being a \emph{uniform team} (as the languages we consider cannot tell apart causal teams from uniform teams).




The paper is organized as follows.
Section \ref{SECSYNSEM} introduces the formal languages and two versions of causal team semantics. Section \ref{SECCHARFUN} deepens the discussion of the functions which describe causal mechanisms, addressing issues of definability and the treatment of dummy arguments. 
Section \ref{SECCODED} characterizes semantically the language $\Co$ and reformulates in natural deduction form the $\Co$ calculi that come from \cite{BarSan2019}.  
An interesting simple consequence of the semantical analysis is a proof that the counterfactual is not definable in terms of the remaining operators.  
Section \ref{SECCOMPLETE} gives semantical characterizations for $\COd$ and $\COv$, and complete natural deduction calculi for both types of semantics. We conclude the paper in Section \ref{sec:conclusion}. An early version of this paper has already appeared as \cite{BarYan2020} in conference proceedings. 


		


\section{Syntax and semantics} \label{SECSYNSEM}

\subsection{Formal languages}\label{SECLANG}
In this paper, we  consider three classes of causal languages, which were originally introduced in \cite{BarSan2018}. 
 Each of the 
 causal languages is parametrized by a \textbf{signature} $\sigma$, which 
 is a pair $\sigma=(\Dom,\Ran)$ with $\Dom$ being a nonempty finite set of \textbf{variables}, and $\Ran$  a function that associates to each variable $X\in \Dom$ a nonempty finite set $\Ran(X)$ (called the \textbf{range} of $X$) of \textbf{constant symbols} or \textbf{values} $x$. 
 \footnote{Note that we do not encode a distinction between exogenous and endogenous variables into the signatures, as done in \cite{Hal2000}. Instead, we follow the style of Briggs \cite{Bri2012}. Doing so will result in more general completeness results.}
Throughout the paper we use the Greek letter $\sigma$ to denote an arbitrary signature. 


A boldface capital letter $\SET X$ stands for a sequence $\langle X_1,\dots,X_n\rangle$ of variables; similarly, a boldface lower case letter $\SET x$ stands for a sequence $\langle x_1,\dots,x_n\rangle$ of values.  We will sometimes abuse notation and treat $\SET X$ and $\SET x$ also as sets. We abbreviate the conjunction of equalities 
$X_1=x_1 \land \dots \land X_n=x_n$ as $\SET X = \SET x$, where we always assume that $x_i\in Ran(X_i)$, and also call $\SET X = \SET x$ an equality 
(over $\sigma$). We write $\Ran(\SET X)$ for $\Ran(X_1)\times \cdots\times \Ran(X_n)$. 

Given a signature $\sigma=(\Dom,\Ran)$, formulas of our basic languages $\CO$ are formed according to the grammar:
$$\alpha::= X=x \mid   \neg \alpha   \mid  \alpha\land\alpha   \mid  \alpha\lor\alpha   \mid  
 \SET X=\SET x\cf\alpha$$
where $\SET X \cup \{X\}\subseteq \Dom$, $x\in \Ran(X)$, $\SET x \in \Ran(\SET X)$.\footnote{We are identifying syntactical variables and values with their semantical counterpart, following the conventions of the literature on interventionist counterfactuals, e.g. \cite{GalPea1998,Hal2000,Bri2012,Zha2013}. In this convention, distinct symbols (e.g. $x,x'$) denote distinct objects. Obviously, syntax and semantics could in principle be also kept completely separated; see \cite{BarSan2019} for details.} 
 The connective $\cf$ is used to form \emph{interventionist counterfactuals}, a peculiar type of conditional formula. Following the literature on team semantics, we also call the disjunction $\lor$ \emph{tensor disjunction}. 
Throughout the paper, we 
reserve the first letters of the Greek alphabet, $\alpha$ and $\beta$ for $\CO$ formulas.

We also consider two extensions of $\CO$, obtained by adding the \emph{global disjunction} $\vvee$, 
or dependence atoms $\dep{\SET X}{Y}$: 
\begin{itemize}
\item[-] $ \COV :~~ \varphi::= X=x \mid   \neg \alpha   \mid  \varphi\land\varphi   \mid \varphi\lor\varphi \mid \varphi\vvee\varphi  \mid 
\SET X=\SET x\cf\varphi $

\item[-] $\COD :~~ \varphi::= X=x \mid \ \dep{\SET X}{Y} \mid  \neg \alpha   \mid \varphi\land\varphi \mid \varphi\lor\varphi   \mid  
 \SET X=\SET x\cf\varphi$

\end{itemize}
where $\SET X \cup \{X,Y\}\subseteq \Dom$, $x\in \Ran(X)$, $\SET x \in \Ran(\SET X)$,  $\alpha\in\CO$.   According to our syntax, negation is allowed to occur only in front of $\CO$-formulas $\alpha$.\footnote{\corrb{Following the tradition in the field of logics of (in)dependence, we do not allow the negation to occur in front of formulas with occurrences of $\vvee$ or of dependence atoms. The main reason is that, if we apply to such formulas the usual semantic clause for negation, we obtain many trivializing results such as $\neg \dep{\SET X}{Y} \equiv \bot$ and $\neg (\psi \vvee \chi) \equiv \neg (\psi \lor \chi)$\todoy{I think this second one is not that problematic, perhaps...}.} 
} This syntax for negation is more liberal than that in \cite{BarSan2018}, where formulas are assumed to be always in negation normal form. 

%



\subsection{Team semantics over causal teams} \label{SUBSCT}

We study two variants of team semantics for the three causal languages. In this section, we review (with some modifications) the causal team semantics from \cite{BarSan2019}. 




An \textbf{assignment} over  a signature $\sigma=(\Dom,\Ran)$ 
is a map
$s:\Dom\rightarrow\bigcup_{X\in \Dom}\Ran(X)$ such that $s(X)\in \Ran(X)$
for every $X\in \Dom$. 
Denote by $\ASS$ the set of all assignments over $\sigma$. 
A (non-causal) \textbf{team} $T^-$ over $\sigma$ 
is a set of assignments over $\sigma$, i.e., $T^-\subseteq \ASS$. The reason for our choice of the notation $T^-$ instead of $T$ will become clear later in this section. 


\begin{exm}\label{exm:team}
Let $\Dom = \{U,X,Y,Z\}$ be a set of variables, and let 
\[\Ran(U)=\Ran(X)=\{0,1\}, ~\Ran(Y)=\{1,2\}, \text{ and }~\Ran(Z)=\{2,3,4,5,6\}.\] 
Consider  the team $T^-=\{s_1,s_2\}$ over the signature $\sigma = (\Dom,\Ran)$, where 
\[s_1= \{(U,0),(X,0),(Y,1),(Z,2)\}\text{ and }s_2= \{(U,1),(X,1),(Y,2),(Z,6)\}.\]
The team $T^-$ clearly respects the ranges of the variables, as, e.g., $s_2(Z)=6\in \Ran(Z)$. We can  represent the team $T^-$ as a table:
\begin{center}
\begin{tabular}{|c|c|c|c|}
\hline
 \multicolumn{4}{|l|}{U \ \ X \ \ \ Y \ \ \ Z} \\
\hline
 $0$ & $0$ & $1$ & $2$ \\
\hline
 $1$ & $1$ & $2$ & $6$ \\
\hline
\end{tabular}
\end{center}
The first row of the table represents the assignment $s_1$ and the second row represents $s_2$ in the obvious manner. For instance, the table cell with the number ``6'' represents the fact that $s_2(Z)=6$, and so on.
%
%
%
\end{exm}


A \textbf{system of functions} $\F$ over $\sigma$ 
is a function  that assigns to each variable $V$ in a domain $\End(\F)\subseteq \Dom$ a set $PA^{\F}_V\subseteq \Dom\setminus\{V\}$ of  \textbf{parents} of $V$, and a function  $\F_V: \Ran(PA^{\F}_V)\rightarrow \Ran(V)$.\footnote{We  identify the set $PA^{\F}_V$ with a sequence, in a fixed lexicographical ordering.} Variables in the set $\End(\F)$ are called \textbf{endogenous variables}  of $\F$, and variables in $\Exo(\F)=\Dom\setminus \End(\F)$ are called \textbf{exogenous variables} of $\F$. 

Denote by $\FUN$ the set of all systems of functions over $\sigma$, which is clearly finite. 
Any system of functions $\F\in \FUN$ can be naturally associated with a (directed) graph $G_{\F}=(\Dom,E_{\F})$, defined as 
$$(X,Y)\in E_\F\text{ iff }X\in PA_Y^\F.$$
A \textbf{cycle} is a subset of $E_\F$ of the form $\{(X_1,X_2),(X_2,X_3),\dots,(X_n,X_1)\}$, where $X_1,\dots X_n$ ($n\geq 1$) are distinct variables. 
We say that  $\F$ is \textbf{recursive} if $G_\F$ is acyclic, i.e., 
no subset of $E_{\F}$ is a cycle. 

\begin{exm}\label{exm:system_of_functions}
Consider the system of functions $\F$ (over the signature $\sigma$ from Example \ref{exm:team})  
described by the following graph $G_\F$ and system of equations: 
\begin{center}
\begin{tabular}{cccc}
 \multicolumn{4}{l}{ } \\ 
 \multicolumn{4}{l}{
U\tikzmark{FROMUa}  \, \tikzmark{TOXa}X\tikzmark{FROMXa} \,  \tikzmark{TOYa}Y\tikzmark{FROMYa}   \, \   \tikzmark{TOZa}Z} \\
\\
\end{tabular}
 \begin{tikzpicture}[overlay, remember picture, yshift=.25\baselineskip, shorten >=.5pt, shorten <=.5pt]
	\draw [->] ([yshift=3pt]{pic cs:FROMUa})  [line width=0.2mm] to ([yshift=3pt]{pic cs:TOXa});
	\draw [->] ([yshift=3pt]{pic cs:FROMXa})  [line width=0.2mm] to ([yshift=3pt]{pic cs:TOYa});
	\draw [->] ([yshift=3pt]{pic cs:FROMYa})  [line width=0.2mm] to ([yshift=3pt]{pic cs:TOZa});
  
	\draw ([yshift=7pt]{pic cs:FROMUa})  edge[line width=0.2mm, out=30,in=160,->] ([yshift=9pt]{pic cs:TOZa});
	\draw ([yshift=8pt]{pic cs:FROMXa})  edge[line width=0.2mm, out=20,in=160,->] ([yshift=6pt]{pic cs:TOZa});
  \end{tikzpicture}
	\hspace{30pt} 
	$\left\{
	\begin{array}{lcl}
	\F_X(U) & = & U \\
	\F_Y(X) & = & X+1 \\
	\F_Z(X,Y,U) & = & 2Y+X+U \\
	\end{array}
	\right.$
\end{center}
The system of equations above describes the ``laws'' that generate the values of the variables $X,Y$ and $Z$ in terms of the values of  other variables. For instance, in the graph $G_{\F}$ the arrow from $U$ to $Z$ represents an edge $(U,Z)$. The equations are consistent with the graph $G_{\F}$. 
Since $G_\F$ contains no cycles, $\F$ is recursive. The variable $U$ with no incoming arrows is the only exogenous variable; the other variables are endogenous. That is, $\Exo(\F) = \{U\}$, and $\End(\F)= \{X,Y,Z\}$. The parents of each endogenous variable can also be easily read off from the above graph and equations: $PA^\F_X =\{U\}$, $PA^\F_Y =\{X\}$ and $PA^\F_Z =\{X,Y,U\}$. 
%
\end{exm}

We say that an assignment $s\in \ASS$ is \textbf{compatible} with a system  of functions $\mathcal F\in \FUN$ if for all endogenous variables $V\in \End(\F)$, 
$$s(V)=\F_V(s(PA_V^{\F})).$$





\begin{df}
A \textbf{causal team} over a signature $\sigma$  is a pair $T = (T^-,\F)$ consisting of \begin{itemize}
\item[-] a team $T^-$ over $\sigma$, called the \textbf{team component} of $T$, and
\item[-] a system of functions $\F$ over $\sigma$, called the \textbf{function component} of $T$,
\end{itemize}
where all assignments $s\in T^-$ are compatible with the function component $\F$.
\end{df}

\noindent In particular, a pair $T=(\emptyset,\F)$ with the team component being the empty set $\emptyset$ is also a causal team. For simplicity, over the same signature $\sigma$, we identify all teams with the empty team component and different function components, namely we stipulate $\emptyset:=(\emptyset,\F)=(\emptyset,\G)$ for any function components $\F,\G$ over $\sigma$. We call the causal team $\emptyset$ the {\em empty causal team} (over $\sigma$). 

The graph of a causal team $T$ is defined as the graph associated to its function component; we denote it as $G_T$.
 We call $T$ \textbf{recursive} if  $G_{T}$ is acyclic. We stipulate that the empty causal team is recursive.  Throughout this paper,  we will only consider recursive causal teams.
The reasons behind this choice 
are many. First of all, the recursive case \corrb{might be} of greater philosophical importance, as it is \corrb{sometimes claimed} 
that only the recursive systems of functions admit a causal interpretation (see e.g. \cite{StrWol1960} \corrb{for a discussion}). Secondly, the logic of counterfactuals in the general case is much more complex than in the recursive case (see \cite{Hal2000}) and it requires 
a separate treatment.\footnote{In particular, it naturally involves the treatment of \emph{might}-counterfactuals, which are not downward closed formulas in the sense explained in Theorem \ref{TEOGENDW}. 
They are thus out of the reach of the methods used in the present paper.}  


Intuitively, a causal team $T$  may be seen as representing an assumption concerning the causal relationships among the variables in $\Dom$ (as encoded in $\F$) together with a range of hypotheses concerning the actual state of the system (as encoded in $T^-$). 

\begin{exm}\label{EXCT}
We now combine the (non-causal) team  $T^-$ from Example \ref{exm:team} and the system $\F$ of functions from Example \ref{exm:system_of_functions} into a causal team $T=(T^-,\F)$, as illustrated in the following diagram:
\begin{center}
$T^-$: \begin{tabular}{|c|c|c|c|}
\hline
 \multicolumn{4}{|l|}{ } \\ 
 \multicolumn{4}{|l|}{
U\tikzmark{FROMU}  \, \tikzmark{TOX}X\tikzmark{FROMX} \,  \tikzmark{TOY}Y\tikzmark{FROMY}   \, \   \tikzmark{TOZ}Z} \\
\hline
 $0$ & $0$ & $1$ & $2$ \\
\hline
 $1$ & $1$ & $2$ & $6$ \\
\hline
\end{tabular}
 \begin{tikzpicture}[overlay, remember picture, yshift=.25\baselineskip, shorten >=.5pt, shorten <=.5pt]
	\draw [->] ([yshift=3pt]{pic cs:FROMU})  [line width=0.2mm] to ([yshift=3pt]{pic cs:TOX});
	\draw [->] ([yshift=3pt]{pic cs:FROMX})  [line width=0.2mm] to ([yshift=3pt]{pic cs:TOY});
	\draw [->] ([yshift=3pt]{pic cs:FROMY})  [line width=0.2mm] to ([yshift=3pt]{pic cs:TOZ});
  
	\draw ([yshift=7pt]{pic cs:FROMU})  edge[line width=0.2mm, out=30,in=160,->] ([yshift=9pt]{pic cs:TOZ});
	\draw ([yshift=8pt]{pic cs:FROMX})  edge[line width=0.2mm, out=20,in=160,->] ([yshift=6pt]{pic cs:TOZ});
  \end{tikzpicture}
	\hspace{30pt} 
	$\left\{
	\begin{array}{lcl}
	\F_X(U) & = & U \\
	\F_Y(X) & = & X+1 \\
	\F_Z(X,Y,U) & = & 2Y+X+U \\
	\end{array}
	\right.$
\end{center}
It is easy to verify that all assignments in the team component $T^-$ are compatible with the function component $\F$, e.g., \[s_2(Z) = 6 = 2\cdot 2 + 1 + 1 = 2\cdot s(Y) + s(X) + s(U) = \F_Z(s(X),s(Y),s(U)).\]


\noindent The arrows in the upper part of the table represent the graph $G_{T}$ of the causal team $T$, which is defined as $G_{\F}$. As demonstrated in Example \ref{exm:system_of_functions}, the graph $G_{T}$ contains no cycles, thus the causal team $T$ is recursive.
%
\end{exm}


\begin{df} \label{DEFCSUBT}
Let $S=(S^-,\G)$ and $T=(T^-,\F)$ be causal teams over the same signature $\sigma$. We call $S$ a \textbf{causal subteam} of $T$, denoted as $S\subseteq T$, if $S^-\subseteq T^-$ 
and  $\G = \F$. We stipulate that the empty causal team $\emptyset$ over $\sigma$ is a causal subteam of any causal team $T$ over $\sigma$, i.e., $\emptyset\subseteq T$.

\end{df}

An equality  $\SET X = \SET x$ (i.e., $X_1=x_1\wedge\dots\wedge X_n=x_n)$ is said to be \textbf{inconsistent} if it contains two conjuncts  $X=x$ and $X=x'$ with distinct values $x,x'$; otherwise it is said to be \textbf{consistent}. For a sequence $\mathbf{X}=\langle X_1,\dots,X_n\rangle$ of variables and an assignment $s$, we write $s(\mathbf{X})$ for $\langle s(X_1),\dots,s(X_n)\rangle$.

\begin{df}[Intervention]\label{intervention_ct_df}
Let $T=(T^-,\mathcal F)$ be a causal team over some signature $\sigma=(\Dom,\Ran)$. Let $\SET X=\SET x\,(=X_1=x_1\wedge\dots\wedge X_n=x_n)$ be a consistent equality over $\sigma$. The \textbf{intervention} $do(\SET X= \SET x)$ on $T$ is the procedure that generates a new causal team $T_{\SET X = \SET x}=(T_{\SET X = \SET x}^-,\mathcal F_{\SET X=\SET x})$ over $\sigma$ defined as follows:
\begin{itemize}
\item[-] $\F_{{\SET X = \SET x}}$ is the restriction of $\F$ to $\End(\F)\setminus \SET X$, that is, the system of functions that assigns to each variable $V\in\End(\F)\setminus \SET X$ the set of parents $PA^\F_V$ and the function $\F_V$ as prescribed by $\F$;  

\item[-] $T_{\SET X=\SET x}^-=\{s^\F_{\SET X=\SET x}\mid s\in T^-\}$, where each $s^\F_{\SET X=\SET x}$ 
is an assignment compatible with $\mathcal F_{{\SET X = \SET x}}$ defined (recursively) as 
\begin{center}\(s^\F_{\SET X=\SET x}(V)=\begin{cases}
x_i&\text{ if }V=X_i\text{ for some }1\leq i\leq n,\\
s(V)&\text{ if } V\in \Exo(\F)\setminus \SET X, \\ 
\F_V(s^\F_{\SET X=\SET x}(PA_V^{\F}))&\text{ if }V\in \End(\F)\setminus \SET X.
\end{cases}\)\end{center}
\end{itemize}

\noindent Every $s^\F_{\SET X=\SET x}$ defined above represents the result of the \textbf{intervention on the single assignment} $s$ with respect to $\F$. In the following, when the $\F$ is clear from the context, we sometimes simply write $s_{\SET X=\SET x}$.

We stipulate that the intervention $do(\SET X= \SET x)$ on the empty causal team $\emptyset$ generates the empty causal team itself, i.e., $\emptyset_{\SET X= \SET x}=\emptyset$.
\end{df}

\begin{exm} \label{EXCTINT}
Recall the recursive causal team $T$ in Example \ref{EXCT}. 
By applying the intervention $do(X=1)$ to $T$, we obtain a new causal team $T_{X=1}=(T_{X=1}^-,\F_{X=1})$  as follows. The function component $\F_{X = 1}$ is determined by the following system of 
equations:
\begin{center}
$\left\{
	\begin{array}{lcl}
		(\F_{X=1})_Y(X) & = & X+1 \\
	(\F_{X=1})_Z(X,Y) & = & 2Y+X+U \\
	\end{array}
	\right.$
\end{center}
The endogenous variable $X$ of the original team $T$ becomes  exogenous in the new team  $T_{X=1}$, and the equation $\F_X(U) = U$ for $X$ is now removed.
The new team component $T_{X=1}^-$ is obtained by the rewriting procedure illustrated below:
\begin{center}
 \begin{tabular}{|c|c|c|c|}
\hline
 \multicolumn{4}{|l|}{ } \\
 \multicolumn{4}{|l|}{
 \ U\tikzmark{FROMU1}  \, \ \ \tikzmark{TOX1}X\tikzmark{FROMX1} \, \ \ \tikzmark{TOY1}Y\tikzmark{FROMY1}   \, \ \ \  \tikzmark{TOZ1}Z} \\
\hline
 \ $0$ \ & \ $\mathbf{1}$ \ & ... & ... \\
\hline
 $1$ & $\mathbf{1}$ & ... & ... \\
\hline
\end{tabular}
 \begin{tikzpicture}[overlay, remember picture, yshift=.25\baselineskip, shorten >=.5pt, shorten <=.5pt]
 
	\draw [->] ([yshift=3pt]{pic cs:FROMX1})  [line width=0.2mm] to ([yshift=3pt]{pic cs:TOY1});
	\draw [->] ([yshift=3pt]{pic cs:FROMY1})  [line width=0.2mm] to ([yshift=3pt]{pic cs:TOZ1});

		\draw ([yshift=7pt]{pic cs:FROMU1})  edge[line width=0.2mm, out=20,in=160,->] ([yshift=9pt]{pic cs:TOZ1});
	\draw ([yshift=8pt]{pic cs:FROMX1})  edge[line width=0.2mm, out=20,in=165,->] ([yshift=6pt]{pic cs:TOZ1});
  \end{tikzpicture}
	\hspace{3pt} 
	$\rightsquigarrow$
	\hspace{3pt}
\begin{tabular}{|c|c|c|c|}
\hline
 \multicolumn{4}{|l|}{ } \\
 \multicolumn{4}{|l|}{
 \ U\tikzmark{FROMU2}  \, \ \ \tikzmark{TOX2}X\tikzmark{FROMX2} \, \ \ \tikzmark{TOY2}Y\tikzmark{FROMY2}   \, \ \ \  \tikzmark{TOZ2}Z} \\
\hline
 \ $0$ \ & \ $1$ \ & \ $\mathbf{2}$ \ & ... \\
\hline
 $1$ & $1$ & $\mathbf{2}$ & ... \\
\hline
\end{tabular}
 \begin{tikzpicture}[overlay, remember picture, yshift=.25\baselineskip, shorten >=.5pt, shorten <=.5pt]
 
	\draw [->] ([yshift=3pt]{pic cs:FROMX2})  [line width=0.2mm] to ([yshift=3pt]{pic cs:TOY2});
	\draw [->] ([yshift=3pt]{pic cs:FROMY2})  [line width=0.2mm] to ([yshift=3pt]{pic cs:TOZ2});

	\draw ([yshift=7pt]{pic cs:FROMU2})  edge[line width=0.2mm, out=20,in=160,->] ([yshift=9pt]{pic cs:TOZ2});
	\draw ([yshift=8pt]{pic cs:FROMX2})  edge[line width=0.2mm, out=20,in=165,->] ([yshift=6pt]{pic cs:TOZ2});

  \end{tikzpicture}
	\hspace{3pt} 
	$\rightsquigarrow$
	\hspace{3pt}
\begin{tabular}{|c|c|c|c|}
\hline
 \multicolumn{4}{|l|}{ } \\
 \multicolumn{4}{|l|}{
  U\tikzmark{FROMU3}  \,   \tikzmark{TOX3}X\tikzmark{FROMX3} \,   \tikzmark{TOY3}Y\tikzmark{FROMY3}   \,  \   \tikzmark{TOZ3}Z} \\
\hline
  $0$  &  $1$  &  $2$ & $\mathbf{5}$ \\
\hline
 $1$ & $1$ & $2$ & $\mathbf{6}$ \\
\hline
\end{tabular}
 \begin{tikzpicture}[overlay, remember picture, yshift=.25\baselineskip, shorten >=.5pt, shorten <=.5pt]
 
	\draw [->] ([yshift=3pt]{pic cs:FROMX3})  [line width=0.2mm] to ([yshift=3pt]{pic cs:TOY3});
	\draw [->] ([yshift=3pt]{pic cs:FROMY3})  [line width=0.2mm] to ([yshift=3pt]{pic cs:TOZ3});

	\draw ([yshift=7pt]{pic cs:FROMU3})  edge[line width=0.2mm, out=25,in=160,->] ([yshift=9pt]{pic cs:TOZ3});
	\draw ([yshift=8pt]{pic cs:FROMX3})  edge[line width=0.2mm, out=20,in=160,->] ([yshift=6pt]{pic cs:TOZ3});

  \end{tikzpicture}
\end{center}
In the first step,  rewrite the $X$-column with value $1$. Then, update (recursively) the other columns using the functions from $\F_{{X = 1}}$. In this step, only the columns that correspond to ``descendants'' of $X$ will be modified, and the order in which these columns should be updated is completely determined by the (acyclic) graph $G_{T_{X=1}}$ of $T_{X=1}$. Since the variable $X$ becomes exogenous after the intervention, all arrows pointing to $X$ have to be removed; in this case, the arrow from $U$ to $X$. 
We refer the reader to \cite{BarSan2019} for more details and a proof of termination for this rewriting procedure. 
\end{exm}

\begin{df}\label{ct_semantics_df}
Let $\varphi$ be a formula in the language $\COV$ or  $\COD$, and $T=(T^-,\mathcal F)$ a causal team over $\sigma$. We define the satisfaction relation $T\models^c\varphi$ (or simply $T\models\varphi$) over causal teams inductively as follows: 
\begin{itemize}
\item[-] $T\models X=x$ ~~iff~~ for all $s\in T^-$, $s(X)=x$.\footnote{Note once more that  the symbol $x$ is used as both  a syntactical and a semantical object.}
\item[-] $T\models\, \dep{\SET X}{Y}$ ~~iff~~ for all $s,s'\in T^-$, $s(\SET X)=s'(\SET X)$ implies $s(Y)=s'(Y)$.
\item[-] $T\models \neg\alpha$ ~~iff~~ for all $s\in T^-$, $(\{s\},\mathcal F)\not \models \alpha$.
\item[-] $T\models \varphi\land \psi$ ~~iff~~  $T\models \varphi$ and $T\models \psi$.
\item[-] $T\models \varphi\lor \psi$ ~~iff~~ there are two causal subteams
$T_1,T_2\subseteq T$ such that $T_1^-\cup T_2^- = T^-$, $T_1\models \varphi$ and $T_2\models \psi$. 
\item[-] $T\models\varphi\vvee\psi$ ~~iff~~ $T\models \varphi$ or $T\models\psi$.
\item[-] $T\models \SET X = \SET x \cf \varphi$ ~~iff~~ $\SET X = \SET x$ is inconsistent or $T_{\SET X=\SET x}\models\varphi$.
\end{itemize}
\end{df}


As usual, define $\bot:= X=x\wedge X\neq x$, where $X\neq x$ is short for $\neg (X=x)$. Clearly, $T\models\bot$ iff $T=\emptyset$.

We write $T\models \Gamma$ and say that ``$T$ satisfies $\Gamma$'' if $T\models \varphi$ for each $\varphi\in\Gamma$. As usual, we can define semantic entailment in terms of satisfaction. For any fixed signature $\sigma$, we write that $\Gamma\models^c\varphi$ (or simply $\Gamma\models\varphi$ whenever there is no risk of ambiguity) if every (recursive) causal team $T$ of signature $\sigma$ that satisfies $\Gamma$ also satisfies $\varphi$. We say that $\varphi$ and $\psi$ are equivalent, denoted as $\varphi\equiv^c\psi$ (or simply $\varphi\equiv\psi$) if $\varphi\models^c\psi$ and $\psi\models^c\varphi$. 


We write a dependence atom $\dep{}{Y}$ with an empty first component  as $\con{Y}$. Clearly, the semantic clause for $\con{Y}$ reduces to:
\begin{itemize}
\item[-] $T\models\, \con{Y}$ ~~iff~~  for all $s,s'\in T^-$, $s(Y)=s'(Y)$.
\end{itemize}
Intuitively, the atom $\con{Y}$ states that $Y$ has a constant value in the team and is thus called a {\em constancy atom}. 
In the context of finite signatures -- i.e. allowing only finite domains and ranges, as is done throughout this paper  -- the dependence atoms are definable in terms of the constancy atoms, because of the following equivalence:
\begin{equation}\label{dep_atm_dfnable_vvee}
\dep{\SET X}{Y}\equiv\bigvee_{\SET x\in \Ran(\SET X)}(\SET X=\SET x\,\wedge \depc{Y}).
\end{equation}
Furthermore, in $\COV$, the finitude of $\sigma$ makes the constancy atoms definable:

\begin{equation}\label{con_atm_dfnable_vvee}
\displaystyle\depc{Y}\equiv \bigvvee_{y\in \Ran(Y)}Y=y.
\end{equation}
Thus, by (\ref{dep_atm_dfnable_vvee}) and (\ref{con_atm_dfnable_vvee}), the dependence atoms themselves are definable in $\COV$.

Recall that counterfactuals $\SET X=\SET x\cf \varphi$ express that $\varphi$ holds after an intervention. In \cite{BarSan2018,BarSan2019}, another conditional operator $\supset $ called the \emph{selective implication} also plays an important role. The selective implication $\alpha\supset\varphi$ (always with  a $\Co$-formula $\alpha$ in the antecedent) expresses that $\varphi$ holds after observing a fact described by $\alpha$. Its semantics is defined as: 
\begin{itemize}
    \item[-] $T \models \alpha\supset \varphi  \iff T^\alpha \models \varphi$, where $T^\alpha$ is the (unique) causal subteam of $T=(T^-,\F)$ with team component $(T^\alpha)^-=\{s \in T^- \mid (\{s\},\F)\models \alpha\}$.
\end{itemize}
It is easy to verify that $\alpha\supset\varphi\equiv\neg \alpha \lor \varphi$ in the languages considered in this paper.\footnote{The equivalence holds due to the fact that the languages are \emph{downward closed} (see Theorem \ref{TEOGENDW}). In the absence of downward closure, $\supset$ is still definable by the more general equivalence $\alpha\supset\varphi \equiv \neg\alpha \lor (\alpha \land \varphi)$.} We thus treat the selective implication $\alpha\supset\varphi$ as a shorthand in our logics.
The selective implication generalizes material implication in the sense that it behaves in the same way on \emph{singleton} causal teams; i.e., 
$$
(\{s\},\F)\models \alpha\supset \varphi \iff (\{s\},\F)\not\models \alpha \text{ or } (\{s\},\F)\models \varphi.
$$

\begin{exm}
Consider the causal team $T$ and the intervention $do(X=1)$ from Examples \ref{EXCT} and \ref{EXCTINT}. Clearly, $T_{X=1}\models Y=2$, and thus $T\models X=1 \cf Y=2$. 
We also have that $T\models\dep{Y}{Z}$, while $T_{X=1}\not\models\dep{Y}{Z}$ (contingent dependencies are not in general preserved by interventions). Observe that $T\models Y\neq 2 \lor Y=2$, while $T\not\models Y\neq 2 \vvee Y=2$.
The team $T^{X=1}$ consists only of the  assignment in the second row of the table for $T$, on which $Y=2$ holds; so we have that $T\models X=1 \supset Y=2$. 

\end{exm}

\subsection{Team semantics over generalized causal teams} \label{SUBSGCT}

Causal team semantics was developed in order to support the introduction of counterfactuals and observations within the framework of team semantics. 
When we look at one of the main interpretations of teams -- as a representation of \emph{uncertainty} among possible states of affairs -- we see an asymmetry: causal teams encode uncertainty about the state of the variables, and certainty about the causal laws. Yet, in all sciences the knowledge of the laws of causation may be as uncertain -- if not more uncertain -- than the knowledge of factual matters. \cite{BarSan2018} proposed to model uncertainty about causal laws by using the more general \emph{partially defined causal teams}. In this section we introduce a more general and at the same time simpler approach: the \emph{generalized causal teams}.


Given a signature $\sigma$, we write 
$$
\SEM:=\{(s,\mathcal F)\in \ASS\times \FUN\mid s\text{ is compatible with }\mathcal F\}.
$$
The pairs $(s,\mathcal F)\in \SEM$ can be easily identified with the \emph{deterministic causal models} (also known as \emph{deterministic structural equation models}) that are considered in the literature on causal inference (\cite{SpiGlySch1993},\cite{Pea2000}, 
etc.).
Even though they are distinct mathematical objects, it is natural to identify a causal team $T=(T^-,\mathcal F)$ with the set
\[
T^g=\{(s,\mathcal F)\mid s\in T^-\}
\]
of deterministic causal models with a common 
function component $\mathcal F$. 
We now introduce the notion of {\em generalized causal team}, in which the function component $\mathcal F$ does not have to be constant throughout the team.

\begin{df} \label{DEFSGCT}
A \textbf{generalized causal team} $T$ over a signature $\sigma$ is a set of pairs $(s,\mathcal F)\in \SEM$, that is, $T\subseteq \SEM$.
We call the set 
\[T^- := \{s \mid (s,\mathcal F) \in T \text{ for some } \mathcal F\}\]  
the \textbf{team component} of $T$.
%
%
A \textbf{causal subteam} of $T$ is a subset $S$ of $T$, denoted as $S\subseteq T$. 
In particular, the empty set $\emptyset$ is a generalized causal team, and $\emptyset\subseteq T$ for all generalized causal teams $T$.


The \textbf{union} $S\cup T$ of two generalized causal teams $S,T$ is their set-theoretic union. 
\end{df}

 Intuitively,  a generalized causal team encodes uncertainty about  which causal model governs the variables in $\Dom$ - i.e., uncertainty both on the values of the variables and on the laws that determine them. Distinct elements $(s,\mathcal F), (t,\mathcal G)$ of the same generalized causal team may also disagree on what is the set of endogenous variables, or on whether the system is recursive or not. 
 A generalized causal team is said to be \textbf{recursive} if, for each  pair $(s,\mathcal F)$ in the team, the associated graph $G_\F$ is recursive. In this paper we  only consider recursive generalized causal teams. 

\begin{exm}\label{EXGCT}
Let $\sigma=(\Dom,\Ran)$ be a signature with variables $\Dom=\{X,Y,Z\}$ and ranges  $\Ran(X)=\{1,2\}$, $\Ran(Y)=\{2,3\}$ and $\Ran(Z)=\{3,4,5\}$. 
Let $\F,\G\in \FUN$ be two (recursive) systems of functions described by the following two systems of equations:
\begin{center}
	$\left\{
	\begin{array}{lcl}
	\F_Z(X)  =  2X \\
	\end{array}
	\right.$
	\quad\quad \quad $\left\{
	\begin{array}{lcl}
	\G_Z(X,Y) = X+Y \\
	\end{array}
	\right.$
\end{center}
Consider two assignments 
\(s = \{ (X,2),(Y,2),(Z,4) \}\text{ and }t = \{ (X,1),(Y,3),(Z,4) \}\)
over $\sigma$. Now, the generalized causal team $T = \{ (s,\F), (s,\G),(t,\G)\}$ over $\sigma$ can be represented as the following table:
\begin{center}
\begin{tabular}{|c|c|c|c|}
\hline
\multicolumn{4}{|l|}{$X$  ~ $Y$ ~~ $Z$} \\
\hline
2 & 2 & 4 & $\F$ \\
\hline
2 & 2 & 4 & $\G$ \\
\hline
1 & 3 & 4 & $\G$ \\
\hline
\end{tabular}
\end{center}
The first element $(s,\F)$ differs from the second one $(s,\G)$ in its function component (as $\F\neq\G$). Since both  $\F$ and $\G$ are recursive, the generalized causal team $T$ is recursive.
%
%
%
%
\end{exm}



The above is a toy example that illustrates the content of a generalized causal team and its graphical presentation. Let us now discuss some possible applications of generalized causal teams as representation tools. Consider a situation in which we have observational data about some phenomenon. It is very common to organize the data in the form of a database or a table. When we are uncertain among alternative causal explanations $\F_1,\dots,\F_n$ that might have produced these data, we may represent the situation by a generalized causal team that contains causal models $(s,\F_1),\dots,(s,\F_n)$ for each distinct entry $s$ of the original database.\footnote{If one wants to make considerations of probabilistic nature, it might be necessary to allow multiple copies of the same causal model to occur in the model. We are not pursuing the probabilistic view in this paper, but see \cite{BarSan2018} for some ideas in this direction.} 
In the worst case, we may be completely clueless about the causal explanations, and thus our generalized causal team will include a causal model $(s,\F)$ for each system of functions $\F$ compatible with \emph{all} entries from the original database. More commonly, we are not completely ignorant about the causal explanations. Let us consider three possible types of situations of this kind.

In the first case, we know what causes what, and what does not -- in other words, we are certain about what the causal graph $G$ is -- but we have no quantitative description of the laws. 
This can be represented in a generalized causal team by including  copies of the entries for each system of functions that agrees with the causal dependencies encoded in $G$.\footnote{In a discussion with the first author, Johan van Benthem suggested the possibility of using teams to represent uncertainty about the correct causal graph. With a little extra toil, also this can be represented by a generalized causal team.} In the second case, the data arises from the observation of different individuals that may be of different types, and are thus bound to react differently to a possible intervention. 
Our uncertainty about these types and how the individual will be affected by  interventions may be represented by associating multiple function components to an individual. In other words, we can use the generalized causal teams to represent the \emph{dispositions} of objects, such as the property of being flammable (in some circumstances) or not. In the third case, we may use  generalized causal teams to represent our \emph{partial} knowledge about a causal law. This corresponds to the scenarios in which we can predict reasonably the behaviour of a physical system only within certain ranges for the values of the variables. For example, we know that if we heat the gas in a balloon, its volume will increase linearly (by Charles' law), but not if we raise its temperature by $5000 ^{\circ}$C (the balloon will melt at this temperature). 
The generalized causal team for this situation will include every system of (total) functions that agree with the known partial law within the ``safe'' range of values.\footnote{\cite{BarSan2018} proposed a different solution to this representation problem, the \emph{partially defined causal teams}. This framework can then be identified with a very special case of generalized causal team semantics. On the other hand, the kind of representation proposed in \cite{BarSan2018} might be more efficient for this specialized task.} 

Let us now come back to the formal theory. We have already mentioned that a causal team $T=(T^-,\F)$ can be identified with the generalized causal team $T^g=\{(s,\F)\mid s\in T^-\}$, which has a constant function component  in all its elements. 
Conversely, if $T$ is a nonempty 
generalized causal team in which all elements  have the same function component $\mathcal F$, i.e., $T=\{(s,\mathcal F)\mid s\in T^-\}$, 
we can  
naturally identify $T$ with the causal team 
\begin{center}\(T^c=(T^-,\mathcal F).\)\end{center}
We stipulate that the empty generalized causal team $\emptyset$ corresponds to the empty causal team $\emptyset$, i.e., $\emptyset^c=\emptyset$.
In particular, a singleton generalized causal team $\{(s,\mathcal F)\}$ corresponds to  a singleton causal team $(\{s\},\F)$. Applying a (consistent) intervention $do(\SET X = \SET x)$ on the causal team $(\{s\},\mathcal F)$ generates a causal team $(\{s^\F_{\SET X = \SET x}\},\mathcal F_{\SET X = \SET x})$ as specified in Definition \ref{intervention_ct_df}.
We can then define the result of the intervention $do(\SET X = \SET x)$ on the \emph{generalized} causal team $\{(s,\mathcal F)\}$ to be the generalized causal team $(\{s^\F_{\SET X = \SET x}\},\mathcal F_{\SET X = \SET x})^g=\{(s^\F_{\SET X = \SET x},\mathcal F_{\SET X = \SET x})\}$.
Interventions on arbitrary generalized causal teams are defined as follows. 


\begin{df}[Intervention over generalized causal teams]
Let $T$ be a (recursive) generalized causal team, and $\SET X = \SET x$  a consistent equality over $\sigma$. The intervention $do(\SET X = \SET x)$ on $T$ generates the generalized causal team 
\[T_{\SET X = \SET x} :=\{(s^\F_{\SET X = \SET x},\F_{\SET X = \SET x})\mid (s,\F)\in T\}.\]
\end{df}

\begin{exm}
Consider  the generalized causal team  $T$ from Example \ref{EXGCT}. The intervention $do(Y=1)$ is computed analogously as it would be done on a causal team, except that in this case the entries in each row must be updated according to the 
function component corresponding to the row. The team $T_{\SET X = \SET x}$ after the intervention is as follows:
\begin{center}
\begin{tabular}{|c|c|c|c|}
\hline
\multicolumn{4}{|l|}{X  \, \ Y \ \ \ Z} \\
\hline
2 & \textbf{1} & 4 & $\F_{Y=1}$ \\
\hline
2 & \textbf{1} & \textbf{3} & $\G_{Y=1}$ \\
\hline
1 & \textbf{1} & \textbf{2} & $\G_{Y=1}$ \\
\hline
\end{tabular}
\end{center}

\noindent The function components are $\F_{Y=1}=\F$ and $\G_{Y=1}=\G$, since $Y$ is exogenous for both $\F$ and $\G$. Since $Y$ is not a parent of any endogenous variable of $\F$,
the values of other variables in the first row  remain unchanged after the intervention.

\end{exm}

%

\begin{df}
Let $\varphi$ be a formula of the language $\COV$ or  $\COD$, and let $T$ be a generalized causal team over $\sigma$. The satisfaction relation $T\models^g\varphi$ (or simply $T\models\varphi$) over generalized causal teams is defined in the same way as in Definition \ref{ct_semantics_df}, except for slight differences in the following clause:
\begin{itemize}
\item[-] $T\models^g \neg\alpha$ ~~ iff ~~ for all $(s,\mathcal F)\in T$, $\{(s,\mathcal F)\}\not \models \alpha$.
\end{itemize}
\end{df}

For any fixed $\sigma$, we also write $\Gamma\models^g\varphi$ (or simply $\Gamma\models\varphi$) if all (recursive) generalized causal teams of signature $\sigma$ that satisfy $\Gamma$ also satisfy $\varphi$; and $\varphi \equiv^g \psi$ (or simply $\varphi \equiv \psi$) if $\varphi \models^g\psi$ and $\psi\models^g\varphi$. 

In the next theorem we list some closure properties for our logics over both causal teams and generalized causal teams. The proof  is left to the reader, or  see \cite{BarSan2019} for the causal team case.


\begin{teo} \label{TEOGENDW}  
Let $T,S$ be (generalized) causal teams over some signature $\sigma$, and let $\varphi$ be a $\COV$-formula or $\COD$-formula.
\begin{description}
\item[Empty team property] $\emptyset\models\varphi$. 
\item[Downward closure] If $T\models\varphi$ and $S\subseteq T$, then $S\models\varphi$.
\end{description}

\noindent In addition, $\CO$-formulas $\alpha$ are closed under unions:
\begin{description}
\item[Union closure] If $T\models\alpha$ and $S\models\alpha$, then $T\cup S\models\alpha$, whenever $T\cup S$ is defined.\footnote{The unions of generalized causal teams are always defined, as in definition \ref{DEFSGCT}. The union of two causal teams with distinct function components, instead, will not generally produce a causal team. See section \ref{SECCOEXPR} for a definition of the union of (similar) causal teams.}
\end{description}
\noindent The empty team property, downward closure and union closure together are equivalent to the flatness property, which $\CO$-formulas satisfy: 
\begin{description}
\item[Flatness] 

\noindent\(T\models \alpha \iff  
(\{s\},\F)\models^c\alpha \text{ for all }s
\) in $T=(T^-,\F)$ 

\(\hspace{100pt}(\text{resp. } \{(s,\F)\}\models^g\alpha \text{ for all }(s,\F)\in T).\)
\end{description}
\end{teo}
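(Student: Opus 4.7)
The plan is to prove all four closure properties by a simultaneous induction on the complexity of the formula, with the heart of the argument being a compatibility lemma between the intervention operation and the subteam/union operations.

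First, I would establish the following easy compatibility observations, which will handle the counterfactual clause uniformly in both semantics: (i) for any consistent $\SET X = \SET x$, if $S \subseteq T$ then $S_{\SET X = \SET x} \subseteq T_{\SET X = \SET x}$, and (ii) when the union $T \cup S$ is defined, $(T \cup S)_{\SET X = \SET x} = T_{\SET X = \SET x} \cup S_{\SET X = \SET x}$. In the generalized setting these are immediate from Definition \ref{DEFSGCT} and the pointwise definition of intervention; in the causal-team setting one uses the fact that the intervention rewrites each assignment independently via the same (restricted) function component $\F_{\SET X = \SET x}$.

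With these tools, the inductive proof is essentially routine. For the \textbf{empty team property}, each base case holds vacuously ($X=x$, $\dep{\SET X}{Y}$, and negations thereof involve universal quantification over the empty team component), the conjunction case is trivial, both disjunctions are handled by splitting $\emptyset = \emptyset \cup \emptyset$, and the counterfactual uses $\emptyset_{\SET X = \SET x} = \emptyset$ and the induction hypothesis. For \textbf{downward closure}, atomic and negated-atomic cases hold because their defining conditions are universally quantified; conjunction and global disjunction are immediate; for tensor disjunction $\varphi \lor \psi$ with witness split $T = T_1 \cup T_2$ and any $S \subseteq T$, one takes $S_i$ to be the causal subteam of $S$ obtained by restricting the team component to $S^- \cap T_i^-$ (same function component), so that $S_i \subseteq T_i$ and the induction hypothesis applies; for the counterfactual, apply observation (i) and the induction hypothesis.

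For \textbf{union closure} of $\CO$-formulas $\alpha$, I restrict to signatures where $T \cup S$ is defined (in the causal case this means $T$ and $S$ share a function component). Atoms $X=x$ and negated atoms $\neg(X=x)$ are directly verified. Conjunction uses the induction hypothesis on both conjuncts. Tensor disjunction: given witness splits $T = T_1 \cup T_2$ with $T_i \models \alpha_i$ and $S = S_1 \cup S_2$ with $S_i \models \alpha_i$, the split $T \cup S = (T_1 \cup S_1) \cup (T_2 \cup S_2)$ together with the induction hypothesis gives $T \cup S \models \alpha_1 \lor \alpha_2$. The counterfactual case uses observation (ii) and the induction hypothesis. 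Finally, \textbf{flatness} for $\CO$ is a direct consequence: one direction is downward closure applied to singletons, the other direction iterates union closure finitely many times (using the empty team property as the base of the union) across the assignments of $T^-$.

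The main obstacle I anticipate is the tensor-disjunction case under downward closure in the causal-team setting: one must verify that the newly defined $S_1, S_2$ are genuine causal subteams (i.e., compatible with the ambient function component), which is where it becomes essential that subteams by Definition \ref{DEFCSUBT} inherit the function component. Everything else reduces to bookkeeping once the compatibility observations (i) and (ii) are in place.
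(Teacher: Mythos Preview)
Your proposal is essentially correct and follows the standard inductive approach that the paper has in mind; indeed, the paper does not spell out a proof at all but simply says ``The proof is left to the reader, or see \cite{BarSan2019} for the causal team case.'' So there is nothing substantive to compare against, and your outline is exactly the kind of routine verification intended.

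Two small points of precision are worth flagging. First, where you write ``negated atoms $\neg(X=x)$'' (in both the downward-closure and union-closure paragraphs), this should be the general case $\neg\alpha$ for arbitrary $\alpha\in\CO$, since the grammar of $\CO$ allows negation in front of any $\CO$-formula. Your reasoning still goes through unchanged, because the semantic clause for $\neg\alpha$ is a universal condition over the team component and therefore is preserved under subteams and under unions without any appeal to the induction hypothesis; only your phrasing needs adjusting. Second, your parenthetical ``in the causal case this means $T$ and $S$ share a function component'' is a simplification: the paper's Definition~\ref{union_ct_df} defines $T\cup S$ whenever $T\sim S$, i.e.\ when the function components are merely \emph{similar}, and the resulting union carries a newly constructed $\mathcal H\sim\F\sim\G$. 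Your same-$\F$ argument does not directly cover this. The fix is immediate once Theorem~\ref{PROPEQUIV} (invariance under causal equivalence) is available: replace $(T^-,\F)$ and $(S^-,\G)$ by the equivalent teams $(T^-,\mathcal H)$ and $(S^-,\mathcal H)$, and then apply your same-$\F$ argument. Since the paper explicitly forward-references Section~\ref{SECCOEXPR} for the definition of union, this mild dependence on a later result is expected.
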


The next lemma shows that the team semantics over causal teams and that over generalized causal teams with a constant function component are essentially equivalent. 
\begin{lm}\label{LEMIDENTIFY}
Let $\varphi$ be a $\COV$ or a $\COD$ formula.
\begin{enumerate}
\item[(i)] For any causal team $T$, we have that $T\models^c\varphi\iff T^g\models^g\varphi.$
\item[(ii)] For any 
generalized causal team $T$ with a unique function component, we have that $T\models^g\varphi\iff T^c\models^c\varphi.$  
\end{enumerate}
\end{lm}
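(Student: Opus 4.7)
The plan is to prove statements (i) and (ii) simultaneously by induction on the complexity of $\varphi$. Observe first that they are essentially two facets of the same correspondence: for a causal team $T=(T^-,\F)$ we have $(T^g)^c = T$, and for a generalized causal team $T$ with unique function component $\F$ we have $(T^c)^g = T$. Hence once the induction is carried out on either side, the other direction is immediate. I will state everything in terms of the pair $(T,T^g)$ and note that the argument is symmetric.

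The base cases are straightforward because the atoms $X = x$ and $\dep{\SET X}{Y}$ are evaluated purely on the team component $T^-$, and by construction $T$ and $T^g$ share the same team component. For the negation case $\neg\alpha$, note that the two semantic clauses differ only in notation: $(\{s\},\F)\models^c\alpha$ quantifies over singleton causal teams, while $\{(s,\F)\}\models^g\alpha$ quantifies over singleton generalized causal teams. Since $(\{s\},\F)^g = \{(s,\F)\}$, the inductive hypothesis applied to $\alpha$ on singletons yields equivalence.

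For the counterfactual $\SET X=\SET x \cf \varphi$, the key observation is the compatibility of intervention with the identification $T \mapsto T^g$: unwinding the definitions gives
\[
(T_{\SET X=\SET x})^g = \{(s^\F_{\SET X=\SET x},\F_{\SET X=\SET x})\mid s\in T^-\} = (T^g)_{\SET X=\SET x},
\]
since $T^g$ has a unique function component $\F$. Inductive hypothesis on $\varphi$ applied to the causal team $T_{\SET X=\SET x}$ then closes the case. The conjunction and global disjunction cases are immediate from the inductive hypothesis.

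The step requiring the most care is the tensor disjunction $\varphi \lor \psi$. Here we must verify that the notion of ``causal subteam covering'' is preserved under the identification. If $T_1,T_2\subseteq T$ witness $T\models^c \varphi \lor \psi$ in the causal team sense, then $T_1 = (T_1^-,\F)$ and $T_2=(T_2^-,\F)$ with $T_1^- \cup T_2^- = T^-$, so $T_1^g, T_2^g \subseteq T^g$ and $T_1^g \cup T_2^g = T^g$, and the inductive hypothesis applies. Conversely, suppose $U_1, U_2 \subseteq T^g$ witness satisfaction in the generalized sense. Since $T^g=\{(s,\F)\mid s\in T^-\}$ has only pairs whose second component is $\F$, each $U_i$ also consists solely of such pairs, hence $U_i$ has a unique function component and corresponds to a causal subteam $U_i^c = (U_i^-, \F)$ of $T$, with $U_1^- \cup U_2^- = T^-$. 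Applying the inductive hypothesis to these subteams yields the desired witnesses on the causal team side. This bookkeeping about subteams is the only nontrivial point; once it is dispatched, the induction concludes the proof of both (i) and (ii).
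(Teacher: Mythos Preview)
Your proposal is correct and takes the same approach as the paper, namely a straightforward induction on $\varphi$; the paper simply states this without spelling out the cases, whereas you supply the details, including the key observation that intervention commutes with the identification $T\mapsto T^g$ and the bookkeeping for the tensor disjunction case.
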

 \begin{proof}
Both items are proved by straightforward induction. 
 \end{proof}

\begin{coro}\label{gct_sem_cons_2_ct}
For any set $\Delta\cup\{\alpha\}$  of $\CO$-formulas, $\Delta\models^{g}\alpha$ iff $\Delta\models^{c}\alpha$. 
\end{coro}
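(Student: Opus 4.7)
The plan is to exploit flatness of $\CO$-formulas (Theorem \ref{TEOGENDW}) together with the two directions of Lemma \ref{LEMIDENTIFY}, which let us convert between causal teams and generalized causal teams with a single function component.

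For the direction $\Delta\models^{g}\alpha \Rightarrow \Delta\models^{c}\alpha$, I would take an arbitrary causal team $T$ with $T\models^{c}\Delta$ and pass to the generalized team $T^{g}$. Since each $\delta\in\Delta$ is a $\CO$-formula, Lemma \ref{LEMIDENTIFY}(i) yields $T^{g}\models^{g}\delta$ for every $\delta\in\Delta$; applying the hypothesis gives $T^{g}\models^{g}\alpha$, and one more application of Lemma \ref{LEMIDENTIFY}(i) gives $T\models^{c}\alpha$.

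For the converse $\Delta\models^{c}\alpha \Rightarrow \Delta\models^{g}\alpha$, let $T$ be a (recursive) generalized causal team with $T\models^{g}\Delta$, and aim for $T\models^{g}\alpha$. Here is where flatness does the heavy lifting: by Theorem \ref{TEOGENDW}, it suffices to show $\{(s,\F)\}\models^{g}\alpha$ for each $(s,\F)\in T$. Fix such an $(s,\F)$. Since $\{(s,\F)\}$ is a generalized causal team with a unique function component, Lemma \ref{LEMIDENTIFY}(ii) reduces this to checking $(\{s\},\F)\models^{c}\alpha$. To apply the hypothesis $\Delta\models^{c}\alpha$, I verify that $(\{s\},\F)\models^{c}\Delta$: for each $\delta\in\Delta$, flatness applied to $T$ yields $\{(s,\F)\}\models^{g}\delta$, which by Lemma \ref{LEMIDENTIFY}(ii) is equivalent to $(\{s\},\F)\models^{c}\delta$. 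Then the hypothesis gives $(\{s\},\F)\models^{c}\alpha$, hence (via Lemma \ref{LEMIDENTIFY}(ii)) $\{(s,\F)\}\models^{g}\alpha$, and finally flatness promotes this pointwise satisfaction back up to $T$.

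I do not expect any serious obstacle here, since both pieces of Lemma \ref{LEMIDENTIFY} and the flatness of $\CO$ are already in hand; the only thing to be careful about is to apply flatness in its generalized-causal-team form (the second clause of the flatness item in Theorem \ref{TEOGENDW}) so that the singleton generalized teams $\{(s,\F)\}$ arising from different rows of $T$ have the appropriate function components attached. Once this bookkeeping is clear, the argument is entirely a pointwise reduction from generalized causal teams to singleton causal teams and back.
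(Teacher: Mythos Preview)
Your proposal is correct and follows essentially the same approach as the paper: flatness of $\CO$-formulas to reduce to singletons, then Lemma \ref{LEMIDENTIFY} to move between the two semantics. The only minor difference is that for the direction $\Delta\models^{g}\alpha \Rightarrow \Delta\models^{c}\alpha$ you pass directly from $T$ to $T^{g}$ via Lemma \ref{LEMIDENTIFY}(i) without invoking flatness, whereas the paper treats both directions ``analogously'' through singletons; your variant is slightly more direct but not substantively different.
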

\begin{proof}

Assume $\Delta\models^{c}\alpha$, and let $T$ be a generalized causal team such that $T\models^g\Delta$. If $T=\emptyset$, by the empty team property we have $T\models \alpha$, and we are done. Otherwise observe that, by the flatness of $\Delta$,  $\{(s,\F)\}\models^g \Delta$ for each $(s,\F)\in T$. By Lemma \ref{LEMIDENTIFY}, we then have that
 $(\{s\},\mathcal F)\models^{c}\Delta$ for each  $(s,\F)\in T$. By the assumption $\Delta\models^{c}\alpha$,  we have $(\{s\},\mathcal F)\models^{c}\alpha$ for each $(s,\F)\in T$. By lemma \ref{LEMIDENTIFY} again, $\{(s,\mathcal F)\}\models^{g}\alpha$ for each $(s,\F)\in T$. By flatness of $\alpha$, we conclude $T\models^g\alpha$.
The  converse direction is proved analogously. 
\end{proof}

\section{Characterizing function components}\label{SECCHARFUN}

As is common in the literature on causation, our formal languages only talk of variables and values; they do not explicitly mention the causal laws nor the functions that describe them. In order to understand the expressive power of these languages, we  need to first understand to what extent they allow an implicit description of the laws. In this section we will answer to this question; the answer is: to a great extent. In the following subsection we begin addressing the aspects of the functional laws that can\emph{not} be described by our languages. 


\subsection{Equivalence of function components 
} \label{SUBSEQUIV}


Consider a binary function $f$ and a quaternary function $g$ defined as \(f(X,Y)=X+Y\text{ and }g(X,Y,Z_1,Z_2)=X+Z_1+0/Z_2+(Y-Z_1).\) 
Essentially $f$ and $g$ are the same function: $Z_1,Z_2$ 
are dummy arguments of $g$. 
In some formal presentations of causal reasoning, the distinction between $f$ and $g$ is quotiented out. We preferred to maintain this distinction in the present paper, following the approaches in \cite{Bri2012,BarSan2018,BarSan2019}. This type of distinction might be needed in case the framework is applied to realistic scenarios. Given a complex description of a function, it might be unfeasible to check whether some of its arguments are dummy in case the variable ranges are large. On the other hand, however, this additional degree of generality brings with it a burden of bookkeeping, which we now address. 


First of all, we define formally 
what it means for two functions (and more generally, two function components) to be equivalent up to dummy arguments. 

\begin{df}\label{DEFSIMFG}
Let $\mathcal F,\mathcal G$ be two function components over  $\sigma = (\Dom,\Ran)$.
\begin{itemize}

\item[-] Let $V\in \Dom$. The two functions $\F_V$ and $\G_V$ are said to be equivalent up to dummy arguments, denoted as $\F_V \sim \G_V$, if 
for any $\SET x\in \Ran(PA_V^{\F}\cap PA_V^{\G})$, $\SET y\in \Ran(PA_V^{\F}\setminus PA_V^{\G})$ and  $\SET z \in \Ran(PA_V^{\G}\setminus PA_V^{\mathcal F})$, 
we have that 
$$\F_V(\SET x\SET y)= \G_V(\SET x\SET z)$$
 (where we assume w.l.o.g. the shown orderings of the arguments of the functions). 

\item[-] 
Let $\Con(\F)$ denote the set of endogenous variables $V$ of $\F$ for which  $\F_V$ is a constant function, i.e., for some fixed $c\in \Ran(V)$, 
\[\F_V(\SET p)=c\text{ for all }\SET p\in PA_V^{\F}.\] 
We say that $\F$ and $\G$ are \textbf{equivalent up to dummy arguments}, denoted as $\mathcal F \sim \mathcal G$, if 
\begin{itemize}
\item 
$\End(\mathcal F)\setminus \Con(\mathcal F) = \End(\mathcal G)\setminus \Con(\mathcal G)$, 
\item 
and $\F_V \sim \G_V$ holds for all $V\in \End(\mathcal F)\setminus \Con(\mathcal F)$. 

\end{itemize}
\end{itemize}
\end{df}

\noindent It is easy to see that $\sim$ is an equivalence relation. 
Furthermore, the relation $\sim$ is  preserved under  
interventions. 

\begin{fct}\label{LEMINTSIM1}
For any function components $\mathcal F,\mathcal G\in \FUN$ and consistent equality $\SET X=\SET x$ over $\sigma$, we have that
$\mathcal F \sim \mathcal G$  implies $\mathcal F_{\SET X = \SET x} \sim \mathcal G_{\SET X = \SET x}$.
\end{fct}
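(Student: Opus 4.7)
The plan is to unpack the definition of intervention and show that $\sim$ is preserved component-wise by simple set-theoretic bookkeeping. First I would record two elementary observations about the effect of the intervention on a function component $\F$:
\begin{enumerate}
\item[(i)] $\End(\F_{\SET X = \SET x}) = \End(\F)\setminus \SET X$, and for every $V \in \End(\F)\setminus \SET X$ the data attached to $V$ are unchanged, i.e.\ $PA_V^{\F_{\SET X = \SET x}} = PA_V^{\F}$ and $(\F_{\SET X = \SET x})_V = \F_V$.
\item[(ii)] As a consequence of (i), a variable $V \in \End(\F)\setminus \SET X$ has a constant function in $\F_{\SET X = \SET x}$ iff it has a constant function in $\F$; thus $\Con(\F_{\SET X = \SET x}) = \Con(\F)\setminus \SET X$.
\end{enumerate}
The analogous identities hold for $\G$.

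Next I would verify the first clause of $\sim$ for the intervened components. Combining (i) and (ii),
\[
\End(\F_{\SET X = \SET x})\setminus \Con(\F_{\SET X = \SET x}) \;=\; \bigl(\End(\F)\setminus \Con(\F)\bigr)\setminus \SET X,
\]
and similarly for $\G$. By the hypothesis $\F \sim \G$ we have $\End(\F)\setminus \Con(\F) = \End(\G)\setminus \Con(\G)$, so removing $\SET X$ from both sides yields
\[
\End(\F_{\SET X = \SET x})\setminus \Con(\F_{\SET X = \SET x}) = \End(\G_{\SET X = \SET x})\setminus \Con(\G_{\SET X = \SET x}).
\]

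Finally, for each $V$ in this common set, $V$ belongs to $\End(\F)\setminus \Con(\F) = \End(\G)\setminus \Con(\G)$, so by assumption $\F_V \sim \G_V$. By (i), the functions $(\F_{\SET X = \SET x})_V$ and $(\G_{\SET X = \SET x})_V$ coincide with $\F_V$ and $\G_V$ respectively (with the same parent sets), hence $(\F_{\SET X = \SET x})_V \sim (\G_{\SET X = \SET x})_V$. Both clauses of Definition \ref{DEFSIMFG} being satisfied, we conclude $\F_{\SET X = \SET x} \sim \G_{\SET X = \SET x}$.

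The argument is essentially a definition chase; no step looks like a genuine obstacle, provided the observation in (ii) -- that constancy of a function is intrinsic to the function and is untouched by the intervention, which only deletes rows of $\F$ indexed by $\SET X$ -- is made explicit. This small point is the only place where one must be careful, since $\Con$ is defined relative to a given function component.
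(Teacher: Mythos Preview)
Your proof is correct and is exactly the kind of routine definition-unpacking the paper has in mind; the paper itself does not give a proof but just declares the fact ``straightforward'' and refers to \cite{BarYan2020}. Your observations (i) and (ii) are immediate from Definition~\ref{intervention_ct_df}, and the rest is the set-theoretic bookkeeping you describe, so there is nothing to add.
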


\begin{proof}
Straightforward; see \cite{BarYan2020}.
\end{proof}

We now generalize the equivalence relation $\sim$ to the causal team level. 
Two nonempty causal teams $T=(T^-,\mathcal F)$ and $S=(S^-,\mathcal G)$ of the same signature $\sigma$ are said to be 
\textbf{similar}, denoted as $T\sim S$, if $\mathcal F \sim \mathcal G$; 
\textbf{equivalent}, denoted as $T\approx S$, if $T\sim S$ and $T^- = S^-$. This definition of equivalence is meant to describe, semantically, the conditions under which $S$ and $T$ cannot be distinguished by our formal languages.
%
%
%
%

Let us now define equivalence also for generalized causal teams. For any generalized causal team $T$, 
write  
\[
T^\mathcal F := \{(s,\mathcal G)\in T \mid \mathcal G\sim \mathcal F \}.
\]
Two generalized causal teams $S$ and $T$ are said to be  \textbf{equivalent}, denoted as $S\approx T$, if   $(S^{\F})^-=(T^{\F})^-$  for all $\F \in \FUN$. 

Moreover, we define the equivalence relation $\approx$  between elements of $\SEM$ by taking 
\[
(s,\F)\approx(t,\G) ~\text{ iff }~ \{(s,\F)\}\approx \{(t,\G)\}~~(\text{ iff }~s=t\text{ and }\F\sim\G).
\]

We now show that formulas in the logics we consider in this paper are invariant under causal equivalence. In other words, none of our languages can distinguish causal teams which are equivalent up to dummy arguments. 

\begin{teo}[Invariance under causal equivalence]\label{PROPEQUIV} 
Let $T,S$ be two (generalized) causal teams over $\sigma$ with $T\approx S$. For any formula $\varphi$ of 
$\CO$, $\COV$ or $\COD$, 
we have that \(T\models\varphi\iff S\models\varphi.\)
\end{teo}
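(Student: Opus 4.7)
The plan is to proceed by structural induction on $\varphi$, treating the causal and generalized semantics in parallel. Most cases follow routinely from the induction hypothesis: the atomic case $X = x$ and the dependence atom $\dep{\SET X}{Y}$ depend only on the team component (which coincides by hypothesis, after noting that for generalized teams $T^{-} = \bigcup_{\mathcal H}(T^{\mathcal H})^{-} = \bigcup_{\mathcal H}(S^{\mathcal H})^{-} = S^{-}$); the cases $\wedge$ and $\vvee$ are immediate; the negation case $\neg \alpha$ reduces to singletons, and any singleton subteam of $T$ has a $\approx$-equivalent counterpart in $S$ (in the generalized case, this is where the hypothesis $T \approx S$ is used to pick a $\mathcal G \sim \mathcal F$ with $(s, \mathcal G) \in S$ whenever $(s, \mathcal F) \in T$). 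For the tensor disjunction $\varphi_1 \vee \varphi_2$, a splitting $T_1, T_2$ of $T$ must be transported to a splitting $S_1, S_2$ of $S$; in the causal case take $S_i := (T_i^{-}, \mathcal G)$, while in the generalized case take $S_i := \{(s, \mathcal G) \in S \mid \exists \mathcal F \sim \mathcal G \text{ with } (s, \mathcal F) \in T_i\}$. Checking $S_i \approx T_i$ and $S_1^- \cup S_2^- = S^-$ is a routine application of $T \approx S$ and transitivity of $\sim$.

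The key step is the counterfactual $\SET X = \SET x \cf \varphi$. Here I would reduce the task to showing $T_{\SET X = \SET x} \approx S_{\SET X = \SET x}$ and then apply the induction hypothesis. This rests on two ingredients: (i) Fact~\ref{LEMINTSIM1}, which yields $\mathcal F_{\SET X = \SET x} \sim \mathcal G_{\SET X = \SET x}$ from $\mathcal F \sim \mathcal G$; and (ii) a separate auxiliary lemma to be proved first, stating that for any $(s, \mathcal F), (s, \mathcal G) \in \SEM$ with $\mathcal F \sim \mathcal G$ and consistent $\SET X = \SET x$, one has $s^{\mathcal F}_{\SET X = \SET x} = s^{\mathcal G}_{\SET X = \SET x}$. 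With these in hand the causal case reduces to $T_{\SET X = \SET x}^- = \{s^{\mathcal F}_{\SET X = \SET x} \mid s \in T^-\} = \{s^{\mathcal G}_{\SET X = \SET x} \mid s \in S^-\} = S_{\SET X = \SET x}^-$; the generalized case follows similarly, by pairing each $(s, \mathcal F) \in T$ with a $(s, \mathcal G) \in S$ of $\sim$-equivalent function component (guaranteed by $T \approx S$) and applying both ingredients to witness the required containment $((T_{\SET X = \SET x})^{\mathcal H})^{-} \subseteq ((S_{\SET X = \SET x})^{\mathcal H})^{-}$, the other direction being symmetric.

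The hard part will be the auxiliary lemma. I would prove it by strong induction along a topological ordering of $G_{\mathcal F_{\SET X = \SET x}}$, handling cases by the status of $V$ in $\mathcal F$ and $\mathcal G$. The clean case is $V \in \End(\mathcal F) \setminus \Con(\mathcal F) = \End(\mathcal G) \setminus \Con(\mathcal G)$, where $\mathcal F_V \sim \mathcal G_V$ implies both functions agree on inputs restricted to the common parents $PA_V^{\mathcal F} \cap PA_V^{\mathcal G}$ (where the IH supplies equal intervened values), while parents outside the intersection are dummy and may be ignored. The mixed subcases — where $V$ is endogenous in one system but exogenous in the other — are, by the definition of $\sim$, forced into situations where the endogenous side has a constant function $c$; compatibility of $s$ with both systems pins $s(V) = c$, and one checks that both interventions evaluate $V$ to $c$. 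The only real subtlety is that parents of $V$ under $\mathcal G$ need not be ancestors of $V$ in $G_{\mathcal F_{\SET X = \SET x}}$, so the IH does not a priori supply their intervened values; but precisely because they are dummy arguments of $\mathcal G_V$ in this case, their values are irrelevant to the output.
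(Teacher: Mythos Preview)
Your proposal is correct and follows essentially the same approach as the paper: induction on $\varphi$, with the $\lor$ case for generalized teams handled by exactly the splitting $S_i := \{(s,\G)\in S \mid (s,\F)\in T_i \text{ for some }\F\sim\G\}$ that the paper uses, and the $\cf$ case reduced to $T_{\SET X=\SET x}\approx S_{\SET X=\SET x}$. The one place where you are more explicit than the paper is in isolating the auxiliary lemma $s^{\mathcal F}_{\SET X=\SET x}=s^{\mathcal G}_{\SET X=\SET x}$ for $\mathcal F\sim\mathcal G$; the paper cites only Fact~\ref{LEMINTSIM1} (preservation of $\sim$ on function components) for the claim $T_{\SET X=\SET x}\approx S_{\SET X=\SET x}$, leaving the team-component half implicit, whereas you correctly spell out that both ingredients are needed and sketch a sound proof of the second one via induction along a topological order, handling the dummy-argument subtlety appropriately.
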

\begin{proof}
The theorem is proved by induction on $\varphi$. The case $\varphi=\,\SET X=\SET x\cf \psi$ follows  from the fact that
$T_{\SET X = \SET x}\approx S_{\SET X = \SET x}$
 (Fact \ref{LEMINTSIM1}). The case $\varphi=\psi\vee\chi$ for causal teams follows directly from the induction hypothesis. We now give the proof for this case for generalized causal teams. We only prove the left to right direction (the other direction is symmetric). Suppose $T\models^g\psi\vee\chi$. Then there are $T_0,T_1\subseteq T$ such that $T=T_0\cup T_1$, $T_0\models^g\psi$ and $T_1\models^g\chi$. Consider $S_i=\big\{(s,\F)\in S\mid (s,\F)\approx(s,\G)\text{ for some }(s,\G)\in T_i\big\}$ ($i=0,1$). It is 
  not hard to see 
  that $S_i\approx T_i$ ($i=0,1$) and $S=S_0\cup S_1$. By induction hypothesis we have that $S_0\models^g\psi$ and $S_1\models^g\chi$. Hence $S\models^g\psi\vee\chi$.
\end{proof}


\subsection{Characterization of the function components} 

In this subsection we show that, up to dummy arguments, every system of functions of signature $\sigma$ is definable in $\CO$.  This result is crucial for adapting the standard methods of team semantics to the causal context.

For any function component $\mathcal F$ over  $\sigma$, define a $\CO$-formula

\begin{center}\(
\displaystyle\Phi^{\mathcal F}:= \bigwedge_{V\in \End(\mathcal F)\setminus \Con(\F)} \eta_\sigma(V) \land \bigwedge_{V\notin \End(\mathcal F)\setminus \Con(\F)  
} \xi_\sigma(V).
\)
\end{center}
where 
\begin{align*}
\eta_\sigma(V):= \bigwedge\big\{&(\SET W = \SET w \land PA_V^{\mathcal F} = \SET p)\cf V = \mathcal F_V(\SET p)\\
&\quad\quad\mid \SET W = \Dom \setminus(PA_V^\mathcal F\cup \{V\}) ,~\SET w \in \Ran(\SET W) ,~ \SET p\in \Ran(PA_V^\mathcal F)\big\}\\
 \text{and }~\xi_\sigma(V):= \bigwedge\big\{&V=v \supset (\SET W_V=\SET w \cf V=v)\\
&\quad\quad\quad\quad\quad\quad\quad\mid v\in \Ran(V),\SET W_V=\Dom\setminus\{V\}, \SET w\in \Ran(\SET W_V)\big\}.
\end{align*}

\vspace{5pt}

\noindent Intuitively, for each non-constant endogenous variable $V$ of $\mathcal F$, the formula $\eta_\sigma(V)$ specifies that all assignments in the (generalized) causal team $T$  in question behave exactly as required by the function $\mathcal F_V$. For each variable $V$ which, according to $\mathcal F$, is exogenous or generated by a constant function, the formula $\xi_\sigma(V)$ states that $V$ is not affected by interventions on 
 other variables. Notice that, if $V\in\Con(\F)$, then $V$ has a $\xi$ but not an $\eta$ clause. Our languages cannot tell apart an endogenous variable that is constant ``by law'' from an exogenous variable that happens to be (contingently) constant in a team.



\begin{teo}\label{LEMPHIF}
Let $\mathcal F\in \FUN$ be a function component over some signature $\sigma$.
\begin{enumerate}
\item[(i)] For any  generalized causal team $T$ over $\sigma$,  we have that 
\[
T\models^g \Phi^\mathcal F  \iff \text{for all } (s,\mathcal G)\in T: \mathcal G \sim \mathcal F\iff T^\F=T. 
\]

\item[(ii)] For any nonempty 
causal team $T = (T^-, \G)$ over $\sigma$, we have that 

\begin{center}\(
T\models^c \Phi^\mathcal F  \iff \G \sim \F.  
\) \end{center}
\end{enumerate}
\end{teo}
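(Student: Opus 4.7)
The plan is to reduce both statements to the same singleton claim:
\begin{equation}\label{eq:singleton_goal}
(\{s\},\G) \models^c \Phi^{\F} \iff \G \sim \F, \text{ for any } s \text{ compatible with } \G.
\end{equation}
First I would note that in (i) the equivalence $\forall(s,\G)\in T: \G\sim\F \iff T^{\F}=T$ is immediate from the definition of $T^{\F}$. For the remaining equivalences, since $\Phi^{\F}$ is a $\CO$-formula, by flatness (Theorem \ref{TEOGENDW}) we have $T\models^g\Phi^{\F}$ iff $\{(s,\G)\}\models^g\Phi^{\F}$ for every $(s,\G)\in T$, and similarly in the causal team case $T\models^c\Phi^{\F}$ iff $(\{s\},\G)\models^c\Phi^{\F}$ for every $s\in T^-$ (which, since $T$ is nonempty in (ii), is a non-vacuous quantification). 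Lemma \ref{LEMIDENTIFY} then identifies the singleton generalized causal team $\{(s,\G)\}$ with $(\{s\},\G)$, so everything reduces to \eqref{eq:singleton_goal}.

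For the ``if'' direction of \eqref{eq:singleton_goal}, assume $\G\sim\F$; I would verify each conjunct of $\Phi^{\F}$. For an $\eta_\sigma(V)$ conjunct with $V\in\End(\F)\setminus\Con(\F)$, the intervention $do(\SET W=\SET w\land PA^{\F}_V=\SET p)$ fixes every variable except $V$, so the new value of $V$ is $\G_V(\SET q)$ where $\SET q$ records the freshly intervened values of $PA^{\G}_V$. Writing $\SET q=\SET x\SET z$ with $\SET x\in\Ran(PA_V^{\F}\cap PA_V^{\G})$ (read off from $\SET p$) and $\SET z\in\Ran(PA_V^{\G}\setminus PA_V^{\F})$ (read off from $\SET w$), the relation $\F_V\sim\G_V$ gives $\G_V(\SET x\SET z)=\F_V(\SET p)$, as required. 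For a $\xi_\sigma(V)$ conjunct, $V\notin\End(\F)\setminus\Con(\F)$, hence also $V\in\Exo(\G)\cup\Con(\G)$; if $s(V)=v$, then after any intervention on $\Dom\setminus\{V\}$, $V$ retains the value $v$---either because $V$ is exogenous in $\G$ (untouched by the intervention) or because $\G_V$ is constant with constant value equal to $s(V)=v$ (forced by compatibility).

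For the ``only if'' direction, I would prove the two clauses of $\sim$ separately. For $\End(\F)\setminus\Con(\F)\subseteq\End(\G)\setminus\Con(\G)$: if $V\in\End(\F)\setminus\Con(\F)$, then $\F_V$ takes at least two distinct values, so the $\eta_\sigma(V)$ conjuncts force the post-intervention value of $V$ to vary with $\SET p$; this rules out $V\in\Exo(\G)$ (where $V$ would be frozen at $s(V)$) and $V\in\Con(\G)$ (where $V$ would always equal a fixed constant). For the reverse inclusion: if $V\in\End(\G)\setminus\Con(\G)$ but $V\notin\End(\F)\setminus\Con(\F)$, then $\xi_\sigma(V)$ is asserted at $(\{s\},\G)$; since $\G_V$ is non-constant, I can pick $\SET w$ forcing $PA_V^{\G}$ to a value $\SET q$ with $\G_V(\SET q)\neq s(V)$, contradicting the counterfactual in $\xi_\sigma(V)$ applied to $v=s(V)$. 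Finally, $\F_V\sim\G_V$ for $V\in\End(\F)\setminus\Con(\F)$ follows by choosing, for arbitrary $\SET x,\SET y,\SET z$, the intervention parameters $\SET p=\SET x\SET y$ and $\SET w$ extending $\SET z$ on $PA_V^{\G}\setminus PA_V^{\F}$ (with arbitrary values on the remaining coordinates of $\SET W$) and reading off the corresponding $\eta_\sigma(V)$ conjunct: it gives $\G_V(\SET x\SET z)=\F_V(\SET x\SET y)$.

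The main delicate point I anticipate is the careful bookkeeping of constant versus exogenous variables. The two clauses $\eta_\sigma(V)$ and $\xi_\sigma(V)$ are designed precisely so that exogenous and constantly-endogenous variables are made indistinguishable, and the argument must use $\eta_\sigma$ and $\xi_\sigma$ asymmetrically depending on the direction---this is where being systematic about which variable belongs to $\End(\cdot)\setminus\Con(\cdot)$ in each system of functions pays off.
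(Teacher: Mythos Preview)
Your proposal is correct and follows essentially the same approach as the paper: both reduce to singletons via flatness and then verify the three ingredients of $\G\sim\F$ using $\eta_\sigma(V)$ for the forward inclusion and the $\F_V\sim\G_V$ clause, and $\xi_\sigma(V)$ for the reverse inclusion. The only organizational difference is that you reduce both (i) and (ii) to the singleton claim \eqref{eq:singleton_goal} upfront, whereas the paper proves (i) directly (invoking flatness only for the $\Longleftarrow$ direction) and then derives (ii) from (i) via $T^g$ and Lemma~\ref{LEMIDENTIFY}; your argument for the reverse inclusion (picking a single $\SET q$ with $\G_V(\SET q)\neq s(V)$, using compatibility of $s$ with $\G$) is in fact a slight streamlining of the paper's two-point argument.
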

\begin{proof}
(i). The second ``$\iff$" clearly holds. We now show the first ``$\iff$". If $T=\emptyset$, the equivalence holds by the empty team property. Now assume that $T\neq\emptyset$.

$\Longrightarrow$: Suppose $T\models^g \Phi^\mathcal F$ and $(s,\mathcal G)\in T$. We show that then $\mathcal G \sim \mathcal F$. \\
$\End(\mathcal F)\setminus \Con(\mathcal F) \subseteq \End(\mathcal G)\setminus \Con(\mathcal G)$: For any $V\in \End(\mathcal F)\setminus \Con(\mathcal F)$, there are distinct $\SET p,\SET p'\in \Ran(PA_V^{\mathcal F})$ such that $\mathcal F_V(\SET p) \neq \mathcal F_V(\SET p')$. Since $T\models\eta_\sigma(V)$, for any $\SET w \in Ran(\SET W)$, we have that 
\begin{center}$
\begin{array}{l}
\{(s,\mathcal G)\} \models (\SET W = \SET w \land PA_V^\mathcal F = \SET p) \cf V=\mathcal F_V(\SET p),\\
\{(s,\mathcal G)\} \models (\SET W = \SET w \land PA_V^\mathcal F =\SET p') \cf V=\mathcal F_V(\SET p').
\end{array}
$
\end{center}
where $\SET W = \Dom \setminus ({PA_V^\F \cup\{V\}})$. 
Thus,
\[ 
s^\G_{\SET W = \SET w \land PA_V^{\mathcal F} = \SET p}(V)=\mathcal F_V(\SET p)\neq \mathcal F_V(\SET p')=s^\G_{\SET W = \SET w \land PA_V^{\mathcal F}= \SET p'}(V).
\]
So, $V$ is neither in $\Con(\G)$ nor exogenous (since the value of such variables is not affected by interventions on different variables). 
Thus, $V\in \End(\G)\setminus \Con(\G)$.

$\End(\mathcal G)\setminus \Con(\mathcal G) \subseteq \End(\mathcal F)\setminus \Con(\mathcal F)$: For any $V\in \End(\mathcal G)\setminus \Con(\mathcal G)$, there are distinct $\SET p,\SET p'\in Ran(PA_V^{\mathcal G})$ such that $\mathcal G_V(\SET p) \neq \mathcal G_V(\SET p')$.  Now, if $V\notin \End(\mathcal F)\setminus \Con(\F)$, then $T\models\xi_\sigma(V)$. Let $v=s(V)$ and $\SET Z=\SET W_V \setminus PA_V^\mathcal{G}$. Since $\{(s,\mathcal G)\}\models V=v$ and $V\notin PA_V^{\mathcal G}$, for any  $\SET z \in \Ran(\SET Z)$, we have that 
\begin{center}$
\begin{array}{l}
\{(s,\mathcal G)\} \models (\SET Z = \SET z \land PA_V^\mathcal G = \SET p) \cf V=v,\\
\{(s,\mathcal G)\} \models (\SET Z = \SET z \land PA_V^\mathcal G =\SET p') \cf V=v.
\end{array}
$\end{center}
By the definition of intervention, we must have that
\[ v=s^\G_{\SET Z = \SET z \land PA_V^{\mathcal G} = \SET p}(V)=\mathcal G_V(\SET p)\neq \mathcal G_V(\SET p')=s^\G_{\SET Z = \SET z \land PA_V^{\mathcal G}= \SET p'}(V)=v,\]
which is impossible. Hence, $V\in \End(\mathcal F)\setminus \Con(\F)$.

$\mathcal F_V \sim \mathcal G_V$ for any $V\in \End(\mathcal F)\setminus \Con(\mathcal F)$: 
Since $T\models\eta_\sigma(V)$ and $V\notin PA_V^{\G}$, for all $\SET x\in \Ran(PA_V^{\mathcal F}\cap PA_V^{\mathcal G})$, $\SET y\in \Ran(PA_V^{\mathcal F}\setminus PA_V^{\mathcal G})$, $\SET z \in \Ran(PA_V^{\mathcal G}\setminus PA_V^{\mathcal F})$ and   $\SET w\in \Ran(\SET W)$ with $\SET w\upharpoonright (PA_V^{\mathcal G}\setminus PA_V^{\mathcal F}) = \SET z$ we have that
\begin{center}\(\{(s,\mathcal G)\}\models (\SET W = \SET w \land PA_V^{\mathcal F} = \SET x\SET y)\cf V = \mathcal F_V(\SET x\SET y).\)
\end{center}
 Note that $(\G_{\SET W = \SET w \land PA_V^{\mathcal F} = \SET x\SET y})_V = \G_V$, as $V\notin \SET W\cup PA_V^\F$. Thus, 
\[ 
\mathcal F_V(\SET x\SET y)= s^\G_{\SET W = \SET w \land PA_V^{\mathcal F} = \SET x\SET y}(V)=\mathcal G_V(s^\G_{\SET W = \SET w \land PA_V^{\mathcal F}= \SET x\SET y}(PA_V^\mathcal G)) =\mathcal G_V(\SET x\SET z),
\]
as required. 

$\Longleftarrow$: Suppose that $\mathcal G \sim \mathcal F$ for all $(s,\mathcal G)\in T$. Since the formula $\Phi^{\F}$ is flat, it suffices to show that $\{(s,\mathcal G)\}\models \eta_\sigma(V)$ for all $V\in \End(\mathcal F)\setminus \Con(\F)$, and $\{(s,\mathcal G)\}\models \xi_\sigma(V)$ for all $V\notin \End(\mathcal F))\setminus \Con(\F)$.

For the former, take any  $\SET w\in \Ran(\SET W)$ and $\SET p\in \Ran(PA_V^\mathcal F)$, and let  $\SET Z = \SET z$ abbreviate $\SET W = \SET w \land PA_V^{\mathcal F} = \SET p$. We show that $\{(s_{\SET Z = \SET z},\mathcal G_{\SET Z = \SET z})\}\models V=\mathcal F_V(\SET p)$. 
Since $\mathcal G \sim \mathcal F$, by Fact \ref{LEMINTSIM1} we have that $ \mathcal G_{\SET Z = \SET z}\sim \mathcal F_{\SET Z = \SET z}$. Thus, 
\begin{center}$\begin{array}{rlr}
s^\G_{\SET Z = \SET z}(V)= (\mathcal G_{\SET Z = \SET z})_V(s^\G_{\SET Z = \SET z}(PA_V^{\mathcal G_{\SET Z = \SET z}})) & = (\mathcal F_{\SET Z = \SET z})_V(s^\G_{\SET Z = \SET z}(PA_V^{\mathcal F_{\SET Z = \SET z}})) & (\text{since }\mathcal G_{\SET X= \SET x} \sim \mathcal F_{\SET X= \SET x})
\\
 & = \mathcal F_V(s^\G_{\SET Z = \SET z}(PA_V^{\mathcal F}))&(\text{since } V\notin \SET Z)\\
 & =\mathcal F_V(\SET p).&
\end{array}
$\end{center}

For the latter, take any $v\in \Ran(V)$ and $\SET w\in \Ran(\SET W_V)$. Assume that $\{(s,\mathcal G)\}\models V=v$, i.e., $s(V)=v$. Since $V\notin \End(\mathcal F)\setminus \Con(\F)$ and $\mathcal F\sim\mathcal G$, we know that $V\notin \End(\mathcal G)$ or $V\in \Con(\mathcal G)$. In both cases we have that $\{(s,\mathcal G)\}\models \SET W_V=\SET w \cf V=v$.

(ii). Let $T$ be a nonempty causal team. Consider its associated generalized causal team $T^g$. The claim then follows from Lemma \ref{LEMIDENTIFY} and item (i). 
\end{proof}


We call a generalized causal team $T$  \textbf{uniform} if  for all $(s,\mathcal F),(t,\mathcal G)\in T$, $\mathcal F\sim \mathcal G$. In particular, the empty team $\emptyset$ is uniform. 
Uniformity is easily definable in $\COv$ by using the characterizing formulas $\Phi^\F$ of the various function components.

\begin{coro}\label{LEMCHARCT}
For any generalized causal team  $T$ over some signature $\sigma$,
\[
\displaystyle T\models^g \bigvvee_{\mathcal F \in \FUN}\Phi^\mathcal F\iff  T \text{ is uniform}.  
\]
\end{coro}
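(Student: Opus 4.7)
The plan is to use Theorem \ref{LEMPHIF}(i) as a black box, together with the semantic clause for $\vvee$ and the fact that $\FUN$ is finite (which makes the big disjunction well-defined). The proof splits naturally into the two directions of the biconditional.

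For the $(\Leftarrow)$ direction, I would assume $T$ is uniform. If $T = \emptyset$, the empty team property yields $T \models \Phi^\F$ for any $\F \in \FUN$, hence $T \models \bigvvee_{\F \in \FUN} \Phi^\F$. If $T \neq \emptyset$, I pick any element $(s,\F_0) \in T$; uniformity then says that $\G \sim \F_0$ for every $(t,\G) \in T$. Applying Theorem \ref{LEMPHIF}(i) with this fixed $\F_0$, we obtain $T \models^g \Phi^{\F_0}$, and by the semantic clause for $\vvee$ (extended by associativity to arbitrary finite disjunctions) this gives $T \models^g \bigvvee_{\F \in \FUN} \Phi^\F$.

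For the $(\Rightarrow)$ direction, I would assume $T \models^g \bigvvee_{\F \in \FUN} \Phi^\F$. Since $\FUN$ is finite, the semantic clause for $\vvee$ yields some $\F_0 \in \FUN$ with $T \models^g \Phi^{\F_0}$. By Theorem \ref{LEMPHIF}(i) again, every $(s,\G) \in T$ satisfies $\G \sim \F_0$. Now for any two elements $(s,\G), (t,\mathcal H) \in T$, transitivity and symmetry of $\sim$ (which I would recall was established in Section \ref{SUBSEQUIV} as an equivalence relation) give $\G \sim \F_0 \sim \mathcal H$, hence $\G \sim \mathcal H$. This is precisely the definition of uniformity for $T$.

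There is no real obstacle here; the corollary is essentially a finite-disjunction packaging of Theorem \ref{LEMPHIF}(i). The only small point to flag is the (trivial) empty-team case in the $(\Leftarrow)$ direction and the appeal to $\sim$ being an equivalence relation in the $(\Rightarrow)$ direction, both of which have already been recorded earlier.
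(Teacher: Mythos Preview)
Your proof is correct and follows exactly the intended route: the paper states this result as an immediate corollary of Theorem \ref{LEMPHIF}(i) without further argument, and your unpacking via the semantic clause for $\vvee$ together with transitivity of $\sim$ is precisely what is meant.
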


\noindent For any causal team $T=(T^-,\F)$, the associated generalized causal team $T^g=$ $=\{(s,\F)\mid s\in T^-\}$ is clearly uniform. It thus follows from Lemma \ref{LEMIDENTIFY}(i) that the formula $\bigvvee_{\mathcal F \in \FUN}\Phi^\mathcal F$ is valid in causal team semantics.

\begin{coro}\label{LEM_unf_ct}
$\models^c \bigvvee_{\mathcal F \in \FUN}\Phi^\mathcal F$.
\end{coro}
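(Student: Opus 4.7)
The plan is to reduce the claim to the analogous result over generalized causal teams, namely Corollary \ref{LEMCHARCT}, as the text hints. Given an arbitrary causal team $T = (T^-, \F)$, I would first pass to its associated generalized causal team $T^g = \{(s, \F) \mid s \in T^-\}$. By construction every element of $T^g$ has the same function component $\F$, so trivially $\F \sim \F$ for all pairs, meaning $T^g$ is uniform.

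Then I would invoke Corollary \ref{LEMCHARCT} to conclude $T^g \models^g \bigvvee_{\mathcal G \in \FUN}\Phi^\mathcal G$. Since $\bigvvee_{\mathcal G \in \FUN}\Phi^\mathcal G$ is a $\COv$-formula, Lemma \ref{LEMIDENTIFY}(i) transfers satisfaction between the two semantics, yielding $T \models^c \bigvvee_{\mathcal G \in \FUN}\Phi^\mathcal G$. Since $T$ was arbitrary, this gives the desired validity.

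A more direct alternative would bypass generalized teams entirely: if $T = \emptyset$, use the empty team property (Theorem \ref{TEOGENDW}) to conclude $T \models \Phi^\F$ for any $\F$, hence $T$ satisfies the global disjunction. If $T \neq \emptyset$, apply Theorem \ref{LEMPHIF}(ii) with $\mathcal G := \F$; since $\sim$ is reflexive, $T \models^c \Phi^\F$, and by the semantic clause for $\vvee$ this already suffices for $T \models^c \bigvvee_{\mathcal G \in \FUN}\Phi^\mathcal G$.

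No substantial obstacle is anticipated; the whole point of the construction of $\Phi^\F$ and of Theorem \ref{LEMPHIF} was to make exactly this kind of one-line consequence available.
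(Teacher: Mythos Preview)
Your first approach is exactly the paper's argument: observe that $T^g$ is uniform, apply Corollary~\ref{LEMCHARCT} to get $T^g\models^g\bigvvee_{\F\in\FUN}\Phi^\F$, and transfer back via Lemma~\ref{LEMIDENTIFY}(i). Your alternative direct route via Theorem~\ref{LEMPHIF}(ii) is also correct and arguably cleaner, but the paper takes the reduction-to-generalized-teams path you outlined first.
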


\noindent Furthermore, we now show that the two entailment relations $\models^g$ and $\models^c$ are equivalent modulo the formula that defines uniformity.

\begin{lm}\label{LEMCTMODELS}
For any set $\Gamma\cup\{\psi\}$ of \CO- or \COV- or $\COD$-formulas, 
\[\displaystyle
\Gamma\models^{c} \psi \iff  \displaystyle\Gamma,\bigvvee_{\mathcal F\in \FUN} \Phi^{\mathcal F}\models^{g} \psi. \]
\end{lm}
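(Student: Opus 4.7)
The plan is to use the formula $\bigvvee_{\F}\Phi^{\F}$ as a bridge that forces a generalized causal team to become essentially a causal team (via uniformity), so that the two semantics coincide on such teams via Lemma \ref{LEMIDENTIFY}.

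For the direction ($\Leftarrow$), I would take any causal team $T$ with $T\models^{c}\Gamma$ and lift it to the generalized causal team $T^{g}$. By Lemma \ref{LEMIDENTIFY}(i), $T^{g}\models^{g}\Gamma$; and by Corollary \ref{LEM_unf_ct} (equivalently, because $T^{g}$ has a constant function component and is therefore uniform, so Corollary \ref{LEMCHARCT} applies), $T^{g}\models^{g}\bigvvee_{\F}\Phi^{\F}$. The hypothesis then yields $T^{g}\models^{g}\psi$, and Lemma \ref{LEMIDENTIFY}(i) once more gives $T\models^{c}\psi$.

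For the direction ($\Rightarrow$), assume $\Gamma\models^{c}\psi$ and let $T$ be a generalized causal team with $T\models^{g}\Gamma$ and $T\models^{g}\bigvvee_{\F}\Phi^{\F}$. If $T=\emptyset$, we conclude by the empty team property. Otherwise, by Corollary \ref{LEMCHARCT}, $T$ is uniform, so there exists some $\F_{0}\in\FUN$ with $\G\sim\F_{0}$ for every $(s,\G)\in T$. Replace every function component in $T$ by this fixed representative, obtaining $T':=\{(s,\F_{0})\mid (s,\G)\in T\text{ for some }\G\}$. By construction $(T')^{-}=T^{-}$, and since both $T$ and $T'$ are concentrated on the $\sim$-equivalence class of $\F_{0}$, one checks that $(T^{\F})^{-}=((T')^{\F})^{-}$ for every $\F\in\FUN$, i.e., $T\approx T'$.

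The key step to verify is then the following: by Theorem \ref{PROPEQUIV} (invariance under causal equivalence), $T'\models^{g}\Gamma$; since $T'$ has a unique function component, Lemma \ref{LEMIDENTIFY}(ii) yields $(T')^{c}\models^{c}\Gamma$; the assumption $\Gamma\models^{c}\psi$ gives $(T')^{c}\models^{c}\psi$; Lemma \ref{LEMIDENTIFY}(ii) translates this back to $T'\models^{g}\psi$; and a final application of Theorem \ref{PROPEQUIV} yields $T\models^{g}\psi$. The main obstacle here is the uniformity-to-constancy step: uniformity only guarantees equivalence of function components up to dummy arguments, not literal equality, so the passage to $T'$ (with a single representative $\F_{0}$) together with the invariance result of Theorem \ref{PROPEQUIV} is essential to justify applying Lemma \ref{LEMIDENTIFY}(ii).
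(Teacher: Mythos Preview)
Your proof is correct and follows essentially the same approach as the paper's own proof: both directions use Lemma \ref{LEMIDENTIFY} to pass between the two semantics, Corollary \ref{LEMCHARCT} to connect the formula $\bigvvee_{\F}\Phi^{\F}$ with uniformity, and Theorem \ref{PROPEQUIV} to replace a uniform generalized causal team by an $\approx$-equivalent one with a single function component. The only cosmetic difference is that the paper picks the representative function component as the $\F$ of some concrete $(t,\F)\in T$, whereas you phrase it as choosing an $\F_{0}$ in the common $\sim$-class; this is the same step.
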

\begin{proof}
$\Longleftarrow$: Suppose $T\models^{c} \Gamma$ for some causal team $T$. Consider the generalized causal team $T^{g}$ generated by $T$.  By Lemma \ref{LEMIDENTIFY}(i),  $T^{g}\models^{g}\Gamma$. Since $T^{g}$ is uniform,  Corollary \ref{LEMCHARCT} gives that $T^{g}\models^{g} \bigvvee_{\mathcal F\in \FUN}\Phi^{\mathcal F}$. Then, by assumption, we obtain that $T^{g}\models^{g}\psi$, which, by Lemma \ref{LEMIDENTIFY}(i) again, implies that $T\models^{c}\psi$.

$\Longrightarrow$:   Suppose $T\models^{g}\Gamma$ and $T\models^{g}\bigvvee_{\mathcal F\in \FUN}\Phi^{\mathcal F}$ for some generalized causal team $T$. If $T=\emptyset$, then $T\models^g\psi$ by the empty team property. Now assume that $T\neq\emptyset$. By Corollary \ref{LEMCHARCT} we know that $T$ is uniform. Pick $(t,\mathcal F)\in T$.  
%
Consider the generalized causal team $S=\{(s,\F)\mid s\in T^-\}$. Observe that $T\approx S$. Thus, by Theorem \ref{PROPEQUIV}, we have that $S\models^{g}\Gamma$, which further implies, by Lemma \ref{LEMIDENTIFY}(ii), that $S^c\models^{c}\Gamma$. Hence, by the assumption we conclude that $S^c\models^c\psi$. 
Since $(S^c)^g=S\approx T$, by applying Lemma \ref{LEMIDENTIFY}(ii) and Theorem \ref{PROPEQUIV} again, we obtain $T\models^g\psi$.
%
%
%
\end{proof}

The $\Phi^\F$ formulas also help us in understanding the behaviour of the global disjunction $\vvee$ in our languages.  In propositional inquisitive logic (\cite{CiaRoe2011}) and  propositional dependence logic (\cite{YanVaa2016}), the global disjunction
was shown to have the {\em disjunction property}, i.e., $\models \varphi\vvee\psi$ implies $\models\varphi$ or $\models\psi$.
It follows immediately from Theorem \ref{LEMPHIF} that the disjunction property of $\vvee$ fails in the context of causal teams, because  $\models^c\bigvvee_{\mathcal F \in \FUN}\Phi^\mathcal F$, whereas $\not\models^c\Phi^\mathcal F$ for any $\mathcal F \in \FUN$. Nevertheless, the global disjunction  does admit the disjunction property over generalized causal teams.

\begin{teo}[Disjunction property]
\label{splitting_prop}
Let $\Delta$ be a set of $\CO$-formulas, and $\varphi,\psi$ be arbitrary formulas over $\sigma$. If $\Delta\models^{g}\varphi\vvee\psi$, then $\Delta\models^{g}\varphi$ or $\Delta\models^{g}\psi$. 
In particular, if $\models^{g}\varphi\vvee\psi$, then $\models^{g}\varphi$ or $\models^{g}\psi$.  
\end{teo}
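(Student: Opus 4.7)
The plan is to follow the standard argument for disjunction properties in team semantics (as in Ciardelli-Roelofsen and Yang-Väänänen), exploiting the fact that in the generalized causal team semantics arbitrary unions are well-defined (Definition \ref{DEFSGCT}), so the obstruction that made the corresponding statement fail over plain causal teams (Corollary \ref{LEM_unf_ct} versus the fact that $\not\models^c\Phi^{\F}$) is no longer in play.

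Concretely, suppose toward a contrapositive that $\Delta\not\models^{g}\varphi$ and $\Delta\not\models^{g}\psi$. Pick generalized causal teams $T_1$ and $T_2$ over $\sigma$ witnessing these, i.e., $T_i\models^{g}\Delta$ for $i=1,2$, with $T_1\not\models^{g}\varphi$ and $T_2\not\models^{g}\psi$. Form the union $T:=T_1\cup T_2$, which is again a (recursive) generalized causal team since both $T_1,T_2$ are recursive and recursivity is a pointwise condition on the pairs $(s,\F)\in T$. The key step is that, since every formula in $\Delta$ is a $\CO$-formula, union closure from Theorem \ref{TEOGENDW} gives $T\models^{g}\Delta$.

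By the assumption $\Delta\models^{g}\varphi\vvee\psi$ we then have $T\models^{g}\varphi\vvee\psi$, i.e., $T\models^{g}\varphi$ or $T\models^{g}\psi$. In the first case, downward closure (Theorem \ref{TEOGENDW}) applied to $T_1\subseteq T$ yields $T_1\models^{g}\varphi$, contradicting the choice of $T_1$; in the second case, $T_2\subseteq T$ yields $T_2\models^{g}\psi$, contradicting the choice of $T_2$. Either way we reach a contradiction, so $\Delta\models^{g}\varphi$ or $\Delta\models^{g}\psi$, as required. The special case $\Delta=\emptyset$ is immediate.

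There is no real obstacle here; the only thing worth emphasizing is exactly where the argument uses properties that fail in the plain causal team setting, namely the unrestricted availability of $T_1\cup T_2$, which is what makes the proof work over $\models^{g}$ but not over $\models^{c}$ (in line with the remark preceding the theorem about $\bigvvee_{\F\in\FUN}\Phi^{\F}$). No novel machinery beyond Theorem \ref{TEOGENDW} is needed.
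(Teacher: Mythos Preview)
Your proof is correct and follows essentially the same argument as the paper: contrapose, take witnesses $T_1,T_2$, form their union, use union closure (equivalently, flatness) of $\CO$-formulas to get $T\models^g\Delta$, and use downward closure to derive the contradiction. The only cosmetic difference is that the paper applies downward closure contrapositively to conclude $T\not\models^g\varphi\vvee\psi$ directly, whereas you first obtain $T\models^g\varphi\vvee\psi$ and then contradict each disjunct; these are the same reasoning.
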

\begin{proof}
Suppose $\Delta\not\models^{g}\varphi$ and $\Delta\not\models^{g}\psi$. Then there are two generalized causal teams $T_1,T_2$ such that $T_1\models \Delta$, $T_2\models \Delta$, $T_1\not\models\varphi$ and $T_2\not\models\psi$. Let $T:= T_1\cup T_2$. 
By flatness of $\Delta$, we have that $T\models\Delta$. On the other hand, by downwards closure, we have that $T\not\models\varphi$ and $T\not\models\psi$, and thus $T\not\models\varphi\vvee \psi$.
\end{proof}


Regarding causal team semantics (an \emph{not} the generalized semantics) there is another peculiarity in the behaviour of the global disjunction $\vvee$:  when applied to causally incompatible formulas, the global disjunction $\vvee$ ``collapses'' to tensor disjunction $\vee$. We say that two formulas $\varphi$ and $\psi$ in $\COD$ or $\COV$ are \textbf{(causally) incompatible} if, for all nonempty causal teams $S =(S^-,\F)$ and $T = (T^-,\G)$, 
$$S\models\varphi \text{ and }  T\models\psi \implies \F\not\sim\G.$$
For instance, it follows immediately from Theorem \ref{LEMPHIF} that the formulas $\Phi^\F$ and $\Phi^\G$ are incompatible when  $\F\not\sim\G$.  The key formula in Corollary \ref{LEMCHARCT}, $\bigvvee_{\F\in \FUN}\Phi^\F$, is thus a global disjunction of pairwise incompatible formulas $\Phi^\F$. By the result below, we will have $\bigvvee_{\F\in \FUN}\Phi^\F\equiv^c\bigvee_{\F\in \FUN}\Phi^\F$, although  the same result does not apply to generalized causal teams (as $\bigvvee_{\F\in \FUN}\Phi^\F\not\equiv^g\bigvee_{\F\in \FUN}\Phi^\F$).

%

\begin{lm}\label{LEMORCOLLAPSE}
For any set $\{\varphi_i\mid i\in I\}$  of pairwise incompatible $\COD$- or $\COV$-formulas, we have that 
\(\bigvvee_{i\in I}\varphi_i\equiv^c\bigvee_{i\in I}\varphi_i.\)
\end{lm}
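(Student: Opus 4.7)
The plan is to prove both directions directly from the semantic clauses, exploiting the fact that tensor subteams of a causal team $T=(T^-,\F)$ all share the function component $\F$.

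For the easier direction $\bigvvee_{i\in I}\varphi_i \models^c \bigvee_{i\in I}\varphi_i$, suppose $T\models^c\bigvvee_{i\in I}\varphi_i$, so that $T\models^c\varphi_j$ for some $j\in I$. Set $T_j := T$ and $T_i := \emptyset$ for all $i\neq j$. Each $T_i$ is a causal subteam of $T$, their team components union back to $T^-$, and each satisfies $\varphi_i$ (by assumption for $j$, by the empty team property for $i\neq j$). Hence $T\models^c\bigvee_{i\in I}\varphi_i$.

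The non-trivial direction is $\bigvee_{i\in I}\varphi_i \models^c \bigvvee_{i\in I}\varphi_i$. Suppose $T=(T^-,\F)\models^c \bigvee_{i\in I}\varphi_i$, witnessed by causal subteams $T_i=(T_i^-,\F)\subseteq T$ with $\bigcup_{i\in I} T_i^- = T^-$ and $T_i\models^c\varphi_i$. If $T=\emptyset$, the empty team property gives $T\models^c\varphi_i$ for every $i$, hence $T\models^c\bigvvee_{i\in I}\varphi_i$. Otherwise $T^-\neq\emptyset$, so at least one $T_i$ is nonempty. The key step is to show that \emph{exactly} one $T_i$ can be nonempty: if there were distinct $i,j\in I$ with both $T_i$ and $T_j$ nonempty, then since $T_i$ and $T_j$ share the same function component $\F$, the pairwise incompatibility of $\varphi_i$ and $\varphi_j$ would force $\F\not\sim \F$, contradicting the reflexivity of $\sim$. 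Hence there is a unique $i_0\in I$ with $T_{i_0}\neq\emptyset$, and then $T_{i_0}^- = \bigcup_{i\in I}T_i^- = T^-$, so $T_{i_0} = T$. Thus $T\models^c\varphi_{i_0}$, and therefore $T\models^c\bigvvee_{i\in I}\varphi_i$.

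The main (and essentially only) obstacle is the incompatibility argument in the second direction, which is where the hypothesis is actually used; the rest is bookkeeping with the semantic clauses and the empty team property. Note that this argument relies crucially on the fact that causal subteams all inherit the function component $\F$ from $T$, which is why the same collapse fails in the generalized causal team setting (where distinct subteams can carry distinct, non-similar function components, as happens with $\bigvvee_{\F\in \FUN}\Phi^\F$).
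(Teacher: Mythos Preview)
Your proof is correct and follows essentially the same approach as the paper's own proof: the left-to-right direction via the empty team property, and the right-to-left direction via the observation that all causal subteams in the $\lor$-decomposition share the same function component $\F$, so that two nonempty witnesses would yield $\F\not\sim\F$ by incompatibility. Your write-up spells out the left-to-right direction a bit more explicitly than the paper does, but the argument is otherwise identical.
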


\begin{proof}
The  left-to-right entailment follows easily from the empty team property. 
Let us now show the right-to-left entailment.
Suppose $T\models^c\bigvee_{i\in I}\varphi_i $. If $T=\emptyset$, then $T\models^c \bigvvee_{i\in I}\varphi_i$ by the empty team property. Now assume that $T\neq\emptyset$. Then there are causal subteams $S_i =(S_i^-,\F)$ of $T$  such that $\bigcup_{i\in I} S_i^- = T^-$ and $S_i\models \varphi_i$ for each $i\in I$. Since $T^-\neq\emptyset$, at least one $S_i$ is nonempty. If both  $S_i=(S_i^-,\F)$ and $S_j=(S_j^-,\F)$ were  nonempty ($i\neq j$), then since $\varphi_i$ and $\varphi_j$ are incompatible, we would have that $\F\not\sim\F$; a contradiction. Hence, we conclude that exactly one $S_i$ is nonempty, and thus $S_i=T$. Therefore $T\models\varphi_i$, from which $T\models\bigvvee_{i\in I}\varphi_i$ follows. 
%
%
\end{proof}


The above ``collapse'' does not only occur in causal team semantics; for a simple example: 
$\alpha\vee\alpha\equiv^{c/g}\alpha\equiv^{c/g}\alpha\vvee\alpha$ for any \Co-formula $\alpha$ (e.g., $\alpha=\Phi^{\F}$).
However, it easy to show that in the generalized (as well as in the non-causal) team semantics, the entailment $\varphi\lor\psi \models^g \varphi\vvee\psi$ 
implies that $\varphi\models\psi$ or $\psi\models\varphi$ (provided $\varphi$ and $\psi$ are downward closed).\footnote{For contraposition, suppose that $\varphi\not\models\psi$ and $\psi\not\models\varphi$. Then there are generalized causal teams $T,S$ such that $T\models \varphi, T\not\models \psi, S\models\psi$ and $S\not\models\varphi$. Thus, we conclude $T\cup S\models\varphi\vee\psi$ by definition, and $T\cup S\not\models\varphi$ and $T\cup S\not\models\psi$ by the downward closure of $\varphi$ and $\psi$. Hence $\varphi\vee\psi\not\models\varphi\vvee\psi$.
} 
On the contrary, the equivalence $\Phi^{\F}\vee\Phi^{\G}\equiv^{c}\Phi^{\F}\vvee\Phi^{\G}$,  for $\F\not\sim\G$, which holds in causal team semantics, does not respect this necessary condition, as neither $\Phi^{\F}$ implies $\Phi^{\G}$ nor vice versa.
\section{Characterizing $\Co$}\label{SECCODED}

In this section, we 
characterize the expressive power of $\Co$ over causal teams. We also present a system of natural deduction for $\Co$ that is sound and complete over both (recursive) causal teams and generalized causal teams. The system is equivalent to the Hilbert-style system of the same logic over (recursive) causal teams introduced in \cite{BarSan2019}.

\subsection{Expressivity}\label{SECCOEXPR}

We show  that $\Co$-formulas capture the flat class of (generalized) causal teams, up to $\approx$-equivalence. 
Our result is analogous to known characterizations of flat languages in propositional team semantics (\cite{YanVaa2017}). But in the case of causal teams we have a twist, given by the fact that only the unions of similar causal teams can be  reasonably defined. We define such unions as follows.

\begin{df}\label{union_ct_df}
Let $S=(S^-,\F)$ and $T=(T^-,\G)$ be two causal teams over the same signature $\sigma$ with $S\sim T$. The union of $S$ and $T$ is defined as the causal team $S\cup T=(S^-\cup T^-,\mathcal H)$ over $\sigma$, where 
\begin{itemize}
\item[-] $\End(\mathcal H)=\End(\F)\setminus \Con(\F)$ 

\item[-]  and for each $V\in \End(\mathcal H)$, $PA^{\mathcal H}_V=PA^{\F}_V\cap PA^{\G}_V$, and
\(\mathcal H_V(\SET p)=\F_V(\SET p\SET x)\)
for any $\SET p\in PA_V^{\F}\cap PA_V^{\G}$ and $\SET x \in PA_V^{\F}\setminus PA_V^{\G}$. 
\end{itemize}
\end{df}
Clearly, $\mathcal H\sim\F\sim\G$ and thus $S\cup T\sim S\sim T$. Let us also remark that even though the definition may seem to look asymmetric as it appears to rely more on $\F$ than on $\G$ in its details, the notion of union is indeed  well-defined, and it guarantees that $S\cup T = T\cup S$. This is because the definition requires that $\F\sim \G$, from which it follows that $\End(\mathcal H) =\End(\F)\setminus \Con(\F)= \End(\G)\setminus \Con(\G)$, and also that $\mathcal H_V(\SET p)=\F_V(\SET p\SET x)=\G_V(\SET p\SET y)$ for any $\SET p\in PA_V^{\F}\cap PA_V^{\G}$, $\SET y \in PA_V^{\G}\setminus PA_V^{\F}$ and $\SET x \in PA_V^{\F}\setminus PA_V^{\G}$.

A formula $\varphi$ over a signature 
$\sigma$ determines a class $\K_\varphi$ of (generalized)
causal teams over $\sigma$ defined as
\begin{center}
\(
\K_\varphi=\{T\mid T \text{ has signature $\sigma$ and } T\models\varphi\}.
\)
\end{center}
We say that a formula $\varphi$ \textbf{defines} a class $\K$ of  (generalized) 
causal teams over $\sigma$ if $\K=\K_\varphi$.

\begin{df}
We say that  a class  $\K$ of (generalized) 
causal teams over  $\sigma$ is
\begin{itemize}
\item[-] \textbf{causally downward closed} if $T\in \K$ and $S\subseteq T$ imply $S\in\K$;

\item[-] \textbf{closed under causal unions} if, whenever  $T_1,T_2\in\K$ and $T_1\cup T_2$ is defined (which is always the case for generalized causal teams), 
 $T_1\cup T_2\in \K$;

\item[-] \textbf{flat} if 
	\begin{itemize}
	\item (for causal teams) $(T^-,\F)\in \K$ iff $(\{s\},\F)\in \K$ 		for all $s\in T^-$; 
	\item (for generalized causal teams) $T\in \K$ iff $\{(s,\F)\}\in 		\K$ for all $(s,\F)\in T$.
	\end{itemize}
\item[-] \textbf{closed under  equivalence}  if $T\in \K$ and $T\approx S$ imply $S\in \K$. 
\end{itemize}
\end{df}
\begin{fct}
A 
class $\K$ of (generalized) causal teams over  $\sigma$  is flat iff $\K$ contains the empty team,  
is causally downward closed and closed under causal unions. 
\end{fct}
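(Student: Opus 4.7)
The plan is to prove the biconditional by establishing each of the implications ``flat $\Rightarrow$ (three closure properties)'' and ``(three closure properties) $\Rightarrow$ flat,'' handling the causal and generalized cases in parallel.

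For the forward direction, assuming $\K$ is flat, I will check the three properties in turn. The empty team lies in $\K$ because applying the flatness biconditional to $T = \emptyset$ makes the singleton condition vacuous. Causal downward closure follows by noting that every singleton (causal) subteam of $S \subseteq T$ is also a singleton subteam of $T$; flatness of $T$'s membership yields that all these singletons are in $\K$, so flatness applied in the reverse direction to $S$ gives $S \in \K$. For closure under causal unions, given $T_1, T_2 \in \K$ with $T_1 \cup T_2$ defined, I apply flatness to see that each singleton of $T_1 \cup T_2$ is (up to $\approx$-equivalence) a singleton of $T_1$ or $T_2$, hence already in $\K$; flatness again then yields $T_1 \cup T_2 \in \K$.

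For the backward direction, given the three closure properties, I verify both halves of the flatness biconditional. One half---that $T \in \K$ implies every singleton of $T$ lies in $\K$---follows from causal downward closure applied to each singleton causal subteam. The other half---that if all singletons of $T$ lie in $\K$, then $T$ itself lies in $\K$---is handled by writing $T$ as a finite union of its singletons. In the generalized case this is just the set-theoretic union; in the causal case $T = (T^-,\F)$ equals $\bigcup_{s\in T^-}(\{s\},\F)$, where the pieces all share the function component $\F$, making each successive union well-defined. Since $\sigma$ is finite the team $T$ is finite, so iterated application of closure under unions yields $T \in \K$; the degenerate case $T = \emptyset$ is covered directly by $\emptyset \in \K$.

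The main obstacle I foresee is a mild subtlety in the forward direction of union closure in the non-generalized causal case. When $T_i = (T_i^-, \F_i)$ with $\F_1 \sim \F_2$ but $\F_1 \neq \F_2$, the union $T_1 \cup T_2 = (T_1^- \cup T_2^-, \mathcal H)$ comes equipped with the function component $\mathcal H$ of Definition \ref{union_ct_df}, which is only similar to (not identical with) either $\F_i$. Hence a singleton $(\{s\}, \mathcal H)$ of the union is only $\approx$-equivalent, rather than equal, to the singleton $(\{s\}, \F_i) \in \K$. I will resolve this by appealing to Theorem \ref{PROPEQUIV}: the relevant classes in the paper are definable by formulas and thus closed under $\approx$, so membership transfers from $(\{s\},\F_i)$ to $(\{s\},\mathcal H)$ and the step goes through without issue.
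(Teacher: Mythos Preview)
Your argument for generalized causal teams is correct, but the causal-team case has a genuine gap that your resolution does not close. The Fact is stated for \emph{arbitrary} classes $\K$, and flatness alone does not entail closure under $\approx$; so appealing to Theorem~\ref{PROPEQUIV} (which concerns formula-definable classes) is illegitimate here. Concretely: take $\F \sim \G$ with different dummy arguments chosen so that the union component $\mathcal H$ of Definition~\ref{union_ct_df} differs from both; then $\K = \{\emptyset, (\{s\},\F), (\{s\},\G)\}$ is flat, yet $(\{s\},\F) \cup (\{s\},\G) = (\{s\},\mathcal H) \notin \K$, so flatness does \emph{not} imply union closure for causal teams.

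The backward direction has a parallel problem you did not flag. Your claim that $T = \bigcup_{s \in T^-} (\{s\},\F)$ is false in general, because Definition~\ref{union_ct_df} sets $\End(\mathcal H) = \End(\F) \setminus \Con(\F)$: even $T \cup T$ need not equal $T$ when $\F$ has a constant endogenous variable. One can then exhibit a class containing $\emptyset$, $(\{s_1\},\F)$, $(\{s_2\},\F)$, $(\{s_1\},\mathcal H)$, $(\{s_2\},\mathcal H)$, $(\{s_1,s_2\},\mathcal H)$ (and nothing else) that is downward closed and union-closed but not flat, since $(\{s_1,s_2\},\F) \notin \K$ while both its singletons are. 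In short, the Fact as literally stated fails for causal teams in both directions; it holds only under the additional hypothesis of closure under $\approx$, which is indeed how the paper pairs flatness with equivalence-closure in Theorem~\ref{TEOCHARCOCT}. The paper leaves the Fact unproved.
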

The class $\K_\varphi$ is always nonempty as the empty team is 
always in $\K_\varphi$ (by Theorem \ref{TEOGENDW}). By Theorems \ref{TEOGENDW} and \ref{PROPEQUIV}, if $\alpha$ is a $\Co$-formula, then $\K_\alpha$ is flat and closed under equivalence.
The main result of this section is the following characterization theorem which gives also the converse direction.

\begin{teo}\label{TEOCHARCOCT}
Let $\K$ be a nonempty class of (generalized) 
causal teams over some signature $\sigma$. Then $\K$ is definable by a $\CO$-formula if and only if $\K$ is flat 
and closed under equivalence.
\end{teo}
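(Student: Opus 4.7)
The forward direction is already granted: by Theorem \ref{TEOGENDW} any class defined by a $\CO$-formula is flat (since $\CO$-formulas have the empty team property, are downward closed, and are closed under unions), and by Theorem \ref{PROPEQUIV} it is closed under $\approx$. For the reverse direction I would adapt the standard team-semantic ``characteristic formula'' construction, exploiting the fact that the signature $\sigma$ is finite and hence $\SEM$ is finite, together with the defining formulas $\Phi^\F$ from Section \ref{SECCHARFUN}.

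Concretely, for each pair $(s,\F)\in\SEM$, define the $\CO$-formula
\[
\chi_{(s,\F)}\;:=\;\Phi^\F\;\wedge\;\bigwedge_{V\in\Dom}V=s(V).
\]
By Theorem \ref{LEMPHIF}, a nonempty generalized causal team satisfies $\chi_{(s,\F)}$ exactly when all its pairs are of the form $(s,\G)$ with $\G\sim\F$; in the causal-team reading, a nonempty $(T^-,\G)$ satisfies $\chi_{(s,\F)}$ iff $T^-=\{s\}$ and $\G\sim\F$. Given a flat, $\approx$-closed class $\K$, let $S_\K$ be the set of $(s,\F)\in\SEM$ whose associated singleton team lies in $\K$, and set
\[
\varphi_\K\;:=\;\bigvee_{(s,\F)\in S_\K}\chi_{(s,\F)},
\]
with the empty disjunction read as $\bot$ (this handles $\K=\{\emptyset\}$). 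Finiteness of $\SEM$ guarantees that $\varphi_\K$ is a $\CO$-formula.

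The verification of $\K=\K_{\varphi_\K}$ proceeds in the usual two steps. For $\K\subseteq\K_{\varphi_\K}$, given $T\in\K$ I would split $T$ into its singleton pieces; by flatness each singleton lies in $\K$, hence indexes a disjunct, and each singleton satisfies its own $\chi_{(s,\F)}$; distributing the remaining disjuncts across the empty subteam completes the splitting. For $\K_{\varphi_\K}\subseteq\K$, any splitting of $T\models\varphi_\K$ produces pieces that are either empty or equivalent (via $\approx$) to some singleton in $\K$; closure of $\K$ under equivalence transfers them back into $\K$ in the actual function component, and flatness then reassembles $T\in\K$.

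The main subtlety I would expect to face is in the causal-team case: the union of two causal teams with non-similar function components is not defined, so the construction of ``pointwise'' characteristic formulas has to be compatible with the rigid function component of $T=(T^-,\G)$. This is precisely where the $\Phi^\F$ component of $\chi_{(s,\F)}$ does its work: tensor disjunction can only decompose $(T^-,\G)$ into subteams sharing the function component $\G$, so any nonempty piece satisfying $\chi_{(s,\F)}$ forces $\G\sim\F$. Closure of $\K$ under $\approx$ then lets us import $(\{s\},\G)\in\K$ from $(\{s\},\F)\in\K$, which is the step that would fail if $\K$ were merely assumed closed under identity rather than under equivalence. Once this is in place, the argument is essentially the same as for generalized causal teams and Theorem \ref{TEOGENDW} handles flatness automatically.
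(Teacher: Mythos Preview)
Your argument is correct, and the defining formula you write is essentially the paper's formula after applying distributivity of $\land$ over $\lor$: the paper sets $T=\bigcup\K$ and defines $\varphi=\bigvee_{\F\in\FUN}(\Theta^{(T^\F)^-}\land\Phi^\F)$, where $\Theta^{(T^\F)^-}=\bigvee_{s\in(T^\F)^-}\bigwedge_V V=s(V)$; expanding this gives precisely your $\bigvee_{(s,\F)}\chi_{(s,\F)}$ ranging over the singletons in $\K$. The difference lies in the proof organization. The paper takes a top-down route, introducing the partial order $\preccurlyeq$ (``$S\approx R\subseteq T$ for some $R$'') and a chain of lemmas culminating in the characterization $S\models\varphi\iff S\preccurlyeq\bigcup\K$; membership in $\K$ then follows from closure under $\succcurlyeq$. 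You instead argue bottom-up and directly: split into singleton pieces, use $\Phi^\F$ to force similarity of function components, transport via $\approx$, and reassemble via flatness. Your route is more elementary for this particular theorem; the paper's $\preccurlyeq$ machinery is heavier here but is reused later (for the $\Xi^T$ formulas in the $\COd$/$\COv$ characterization), which explains why the authors invest in it. One minor point: in the causal-team reassembly step you can appeal to the flatness condition directly (``$(T^-,\G)\in\K$ iff $(\{s\},\G)\in\K$ for all $s\in T^-$'') rather than going through the union construction of Definition~\ref{union_ct_df}, which avoids the detour through the derived function component $\mathcal H$.
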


\noindent Before we present the proof of the theorem, let us first introduce a relation $\preccurlyeq$ between causal teams that is of central use in this proof.
For any (generalized) causal teams $S$ and $T$, we write $S\preccurlyeq T$ if $S\approx R\subseteq T$ for some (generalized) causal team $R$. In particular, we stipulate $\emptyset\preccurlyeq T$ for all (generalized) causal teams $T$. 

\begin{fct}\label{fct_preccurlyeq_closure}
A class $\K$ of (generalized) causal teams over  $\sigma$  is causally downward closed and closed under equivalence iff $\K$ is closed under the relation $\succcurlyeq$, i.e., $T\in \K$ and $T\succcurlyeq S$ imply $S\in \K$.
\end{fct}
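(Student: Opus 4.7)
The proof is a direct unpacking of definitions, using that $\approx$ is an equivalence relation (in particular reflexive and symmetric).

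For the forward direction, I would assume that $\K$ is causally downward closed and closed under equivalence, and take $T \in \K$ with $T \succcurlyeq S$. By definition of $\preccurlyeq$, there exists a (generalized) causal team $R$ with $S \approx R$ and $R \subseteq T$. Applying downward closure to $R \subseteq T$ yields $R \in \K$, and then applying closure under equivalence to $S \approx R$ yields $S \in \K$.

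For the backward direction, I would assume that $\K$ is closed under $\succcurlyeq$ and derive the two closure properties separately. For downward closure: if $T \in \K$ and $S \subseteq T$, then taking $R := S$ and noting that $S \approx S$ (reflexivity of $\approx$) witnesses $T \succcurlyeq S$, so $S \in \K$. For closure under equivalence: if $T \in \K$ and $T \approx S$, then by symmetry of $\approx$ we have $S \approx T$, and taking $R := T \subseteq T$ witnesses $T \succcurlyeq S$, so $S \in \K$.

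The only minor subtlety to check is the boundary case of the empty team, which is handled by the stipulation $\emptyset \preccurlyeq T$ together with the fact that $\emptyset$ is a (generalized) causal subteam of every team, so no special case is really needed. I do not expect any obstacle here; the whole argument is essentially a two-line unfolding of the definition of $\preccurlyeq$ as the relational composition of $\approx$ with $\subseteq$.
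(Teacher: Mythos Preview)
Your proof is correct. The paper states this as a Fact without proof, treating it as an immediate consequence of the definitions; your argument is exactly the expected unfolding of $\preccurlyeq$ as the composition of $\approx$ with $\subseteq$, together with reflexivity and symmetry of $\approx$.
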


We leave it for the reader to verify that $\preccurlyeq$ is a partial order (modulo $\approx$); in particular, $S\preccurlyeq T$ and $T\preccurlyeq S$ imply $S\approx T$. 
The next lemma lists some other useful properties of the relation $\preccurlyeq$, 
where recall that, for every generalized causal team $T$, we define $T^\mathcal F = \{(s,\mathcal G)\in T \mid \mathcal G\sim \mathcal F \}$. 


%


\begin{lm}\label{preccurlyeq_prop}
The following holds in the context of generalized causal teams:
\begin{enumerate}
\item[(i)]\label{preccurlyeq_prop_simplified} $S\preccurlyeq T \iff \{(s,\F)\}\preccurlyeq T$ for all $(s,\F)\in S$.
\item[(ii)]\label{FACTEQPRESERVESUNIFORMITY} If $T=T^\F$ is uniform and $S\preccurlyeq T$, then $S$ is also uniform and $S=S^\F$.
\item[(iii)]\label{preccurlyeq_prop_clo_F} If $S\preccurlyeq T$, then $S^\F\preccurlyeq T^\F$ for all $\F\in \FUN$.
\item[(iv)]\label{preccurlyeq_prop_c2g} If $S,T$ are causal teams, then $S\preccurlyeq T\iff S^g\preccurlyeq T^g$.
\item[(v)]\label{preccurlyeq_prop_union} If $S_i\preccurlyeq T_i$ ($i\in I$), 
then $\bigcup_{i\in I}S_i\preccurlyeq \bigcup_{i\in I}T_i$
\item[(vi)]\label{preccurlyeq_prop_fromunion} If $S\preccurlyeq \bigcup_{i\in I}T_i$, then for every $i\in I$, there exists $S_i\preccurlyeq T_i$ such that $S=\bigcup_{i\in I}S_i $.
\end{enumerate}
\end{lm}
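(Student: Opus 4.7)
The plan is to unfold the definition $S\preccurlyeq T \iff (\exists R)\,S\approx R\subseteq T$ and exploit two basic observations throughout: $\sim$ is an equivalence relation on $\FUN$ (so transitivity lets us compose similarities), and by definition $S\approx R$ is equivalent to $(S^{\F})^- = (R^{\F})^-$ for every $\F\in\FUN$. In every item, the witnessing $R$ required on the right-hand side will be constructed explicitly from the witnesses handed to us on the left-hand side (or vice versa for the reverse direction).

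For item (i), the forward direction is immediate: given $R\approx S$ with $R\subseteq T$, each $(s,\F)\in S$ is matched (via $\approx$) by some $(s,\G)\in R$ with $\F\sim\G$, giving $\{(s,\F)\}\approx\{(s,\G)\}\subseteq T$. For the converse, for each $(s,\F)\in S$ choose a witness $R_{(s,\F)}\approx\{(s,\F)\}$ with $R_{(s,\F)}\subseteq T$, and take $R:=\bigcup_{(s,\F)\in S}R_{(s,\F)}$. A short check using the $(\cdot)^{\F}$ characterization of $\approx$ shows $R\approx S$. Item (ii) is direct: if $S\approx R\subseteq T=T^{\F}$, every element of $R$ is of the form $(s,\G)$ with $\G\sim\F$, hence by $S\approx R$ every element of $S$ has the form $(s,\H)$ for some $\H\sim\F$; thus $S=S^{\F}$ and $S$ is uniform. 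Item (iii) follows from the observation that $R\subseteq T$ yields $R^{\F}\subseteq T^{\F}$, and that $S\approx R$ restricts to $S^{\F}\approx R^{\F}$ since $(S^{\F})^- = (R^{\F})^-$ is one of the defining equalities of $S\approx R$. For item (iv), apply Lemma \ref{LEMIDENTIFY} and Definition \ref{DEFCSUBT}: a causal subteam of $T=(T^-,\G)$ corresponds exactly to a generalized subteam of $T^g$ whose elements share the function component $\G$, and the definitions of $\approx$ for causal and for generalized causal teams agree under this correspondence.

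For item (v), given $R_i\approx S_i$ with $R_i\subseteq T_i$, take $R:=\bigcup_i R_i$, which satisfies $R\subseteq\bigcup_i T_i$. Using the fact that $(\cdot)^{\F}$ commutes with unions on the level of team components,
\[
\Bigl(\bigl(\bigcup_i R_i\bigr)^{\F}\Bigr)^- = \bigcup_i (R_i^{\F})^- = \bigcup_i (S_i^{\F})^- = \Bigl(\bigl(\bigcup_i S_i\bigr)^{\F}\Bigr)^-
\]
for every $\F$, which gives $R\approx \bigcup_i S_i$ as required.

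Item (vi) is the main obstacle: we must reverse the process of (v), and since the $T_i$ may overlap and $S\approx R$ allows $S$ and $R$ to disagree on function components, the decomposition of $S$ is not canonical. Fix $R\approx S$ with $R\subseteq\bigcup_i T_i$ and set $R_i:=R\cap T_i$, so that $\bigcup_i R_i=R$ and $R_i\subseteq T_i$. Then define
\[
S_i := \{(s,\F)\in S \mid \exists (s,\H)\in R_i,\ \F\sim\H\}.
\]
The inclusion $\bigcup_i S_i\subseteq S$ is by construction; the reverse inclusion follows because each $(s,\F)\in S$ is matched by $S\approx R$ with some $(s,\H)\in R$, which lies in $R_i$ for at least one $i$. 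To verify $S_i\approx R_i$, compute
\[
(S_i^{\F})^- = \{s\mid \exists (s,\H)\in R_i,\ \H\sim\F\} = (R_i^{\F})^-,
\]
where the first equality uses transitivity of $\sim$ and the second follows because every $(s,\H)\in R_i\subseteq R$ is matched back to some $(s,\G)\in S$ with $\G\sim\H$ via $R\approx S$. Hence $S_i\preccurlyeq T_i$ via $R_i$, and $\bigcup_i S_i=S$, as required.
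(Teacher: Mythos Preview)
Your proof is correct and follows essentially the same route as the paper: items (i)--(iv) are dispatched with the same direct observations, item (v) uses the union of the witnesses $R_i$ exactly as in the paper, and in item (vi) your definition of $S_i$ is precisely the unfolding of the paper's $S_i=\{(s,\F)\in S\mid \{(s,\F)\}\preccurlyeq R_i\}$. The only quibble is expository: in your verification of $(S_i^{\F})^- = (R_i^{\F})^-$, the phrase ``the second follows because\ldots'' is attached to the wrong equality (the second displayed equality is just the definition of $(R_i^{\F})^-$); what you are really justifying is the $\supseteq$ direction of the first equality, but the mathematics is sound.
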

\begin{proof}
Item (i) 
is easy to prove,
items (ii) 
and (iii) 
are obvious, and item (iv) 
follows easily from the fact that both $S^g$ and $T^g$ are uniform. We only give detailed proof for items (v) 
and (vi).

Let us begin with (v). 
Suppose that $S_i\approx R_i\subseteq T_i$ for some $R_i$ ($i\in I$). Let $R=\bigcup_{i\in I}R_i$. Clearly,  $\bigcup_{i\in I}R_i\subseteq \bigcup_{i\in I}T_i$. It remains to show that $S\approx R$ for $S=\bigcup_{i\in I}S_i$. For every $\F\in\FUN$ and every $i\in I$, since $S_i\approx R_i$, we have that $(S_i^\F)^-=(R_i^\F)^-$. Thus, 
\[
(S^\F)^-=\bigcup_{i\in I}(S_i^\F)^-=\bigcup_{i\in I}(R_i^\F)^-=(R^\F)^-.
\]


Now we prove (vi). 
By the assumption, there is an $R$ such that $S\approx R\subseteq T=\bigcup_{i\in I}T_i$. For every $i\in I$, define $R_i:= R\cap T_i$ and 
\[S_i:=\{(s,\F)\in S \mid \{(s,\F)\}\preccurlyeq  R_i\}.\] 
Clearly, $R_i\subseteq T_i$. To see that $S_i\preccurlyeq T_i$, it remains to verify that $S_i\approx R_i$. For any $\F\in \FUN$, we have that 
\begin{align*}
s\in (S_i^\F)^-&\iff (s,\G)\in S_i\text{ for some }\G\sim\F\iff \{(s,\G)\}\preccurlyeq  R_i\text{ for some }\G\sim\F\\
&\iff \{(s,\G)\}\preccurlyeq R_i^\F\text{ for some }\G\sim\F\iff s\in (R_i^\F)^-.
\end{align*}
Finally, we show that $S=\bigcup_{i\in I}S_i$. By definition, $S_i\subseteq S$ for every $i\in I$. Conversely, for every $(s,\F)\in S$, since $S\approx R$,  
we have by item (i) that $$\{(s,\F)\}\preccurlyeq R=R\cap T=\bigcup_{i\in I}(R\cap T_i)=\bigcup_{i\in I}R_i.$$ Thus,  $\{(s,\F)\}\preccurlyeq R_i$ for some $i\in I$. Hence, by definition, $(s,\F)\in S_i$. 
%
\end{proof}

We will next introduce a formula that characterizes the relation $\preccurlyeq$ in a certain sense. An important part of this characterizing formula is a \CO-formula $\Theta^{T^-}$ that defines the property ``having as team component a subset of $T^-$''. This formula is inspired by a similar one introduced in \cite{YanVaa2016} in the context of propositional team logics. For each (generalized) causal team $T$ over $\sigma=(\Dom,\Ran)$, define
\begin{center}\(\displaystyle
\Theta^{T^-} := \bigvee_{s\in T^-} \bigwedge_{V\in \Dom} V = s(V). 
\)\end{center}
Note that if $T^- = \emptyset$, then $\Theta^{T^-} =  \bigvee\emptyset \ = \ \bot$.

\begin{lm} \label{LEMCHARSUB_ct}
For any (generalized) causal teams $S$ and $T$ over  $\sigma$, we have that \(S\models\Theta^{T^-}\iff S^-\subseteq T^-\).
\end{lm}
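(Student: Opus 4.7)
The plan is to unfold the semantics of the tensor disjunction in $\Theta^{T^-}$ and pin down each ``disjunct team'' to a single assignment via the conjunction of equalities. Before that, I would dispose of the degenerate case $T^- = \emptyset$: here $\Theta^{T^-} = \bigvee \emptyset = \bot$, so $S \models \Theta^{T^-}$ holds exactly when $S$ is the empty (generalized) causal team, which is equivalent to $S^- = \emptyset = T^-$. So assume from now on that $T^- \neq \emptyset$.

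For the forward direction ($\Rightarrow$), suppose $S \models \Theta^{T^-}$. Iterating the semantic clause for $\vee$ yields a family of causal subteams $\{S_s\}_{s \in T^-}$ of $S$ with $\bigcup_{s \in T^-} S_s^- = S^-$ and $S_s \models \bigwedge_{V \in \Dom} V = s(V)$ for each $s \in T^-$. The latter means that for every assignment $t \in S_s^-$ and every $V \in \Dom$, we have $t(V) = s(V)$; that is, $t = s$. Hence $S_s^- \subseteq \{s\}$, and by taking unions over $s \in T^-$ we conclude $S^- = \bigcup_{s \in T^-} S_s^- \subseteq T^-$.

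For the backward direction ($\Leftarrow$), suppose $S^- \subseteq T^-$. I construct a witnessing splitting as follows. In the causal team case $S = (S^-, \F)$, for each $s \in T^-$ set $S_s := (S_s^-, \F)$ where $S_s^- := \{s\} \cap S^-$ (that is, $\{s\}$ if $s \in S^-$ and $\emptyset$ otherwise); in the generalized causal team case, set $S_s := \{(t, \G) \in S : t = s\}$. Either way $S_s$ is a causal subteam of $S$, and every assignment in $S_s^-$ equals $s$ on every variable, so $S_s \models \bigwedge_{V \in \Dom} V = s(V)$. Moreover, since $S^- \subseteq T^-$, every $t \in S^-$ lies in $S_t^-$ for the index $t \in T^-$, so $\bigcup_{s \in T^-} S_s^- = S^-$. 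This exhibits the splitting required by the semantic clause for $\bigvee_{s \in T^-}$, giving $S \models \Theta^{T^-}$.

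No step here should be a genuine obstacle; the only bookkeeping is keeping track of the causal vs.\ generalized semantics (function components must be preserved in the causal case) and the empty-team edge cases. The proof is essentially the same template as the analogous fact for propositional team logic from \cite{YanVaa2016}, adapted trivially to the (generalized) causal setting because the formula $\Theta^{T^-}$ only talks about values of variables and tensor disjunction, both of which interact cleanly with the team component.
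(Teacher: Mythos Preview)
Your proof is correct and follows essentially the same approach as the paper: a direct unfolding of the semantics of $\Theta^{T^-}$. The only cosmetic difference is that in the forward direction the paper invokes downward closure to reduce to singleton teams (using that a singleton satisfying a tensor disjunction must satisfy one disjunct), whereas you unfold the splitting explicitly; and in the backward direction the paper writes $\Theta^{T^-} = \Theta^{S^-} \vee \Theta^{T^-\setminus S^-}$ and appeals to the empty team property, whereas you give the concrete splitting into the subteams $S_s$.
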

\begin{proof}
We only give detailed proof for the statement for causal teams; the generalized causal team case is analogous.

``$\Longrightarrow$'': Suppose $S\models\Theta^{T^-}$ and $S=(S^-,\F)$. 
For any $s\in S^-$, by downward closure, we have that $(\{s\},\F)\models \Theta^{T^-}$, which means that for some $t\in T^-$, \((\{s\},\F)\models V = t(V)\) for all $V\in\Dom$.
This implies that $s=t\in T^-$.

``$\Longleftarrow$'': Suppose $S^-\subseteq T^-$. Observe that 
$S\models\Theta^{S^-}$ and $\Theta^{T^-}=\Theta^{S^-}\vee \Theta^{T^-\setminus S^-}$.
Thus, we conclude $S\models \Theta^{T^-}$ by the empty team property.
\end{proof}

Next, we show that the formula $\Theta^{T^-}\wedge \Phi^{\F}$ defines the property of being in the  $\preccurlyeq$ relation with a uniform generalized causal team $T=T^\F$ or a  causal team $(T^-,\F)$.

\begin{lm}\label{LEMCHARSUB}
\begin{enumerate}
    \item[(i)] Let $S$ and $T$ be 
    generalized causal teams over $\sigma$. If $T=T^\F$ is uniform, then
\(
S\models^g\Theta^{T^-}\wedge \Phi^{\F} \iff S\preccurlyeq T.
\)
\item[(ii)] For any 
causal teams $S=(S^-,\G)$ and $T=(T^-,\F)$ over $\sigma$, we have that
\(
S\models^c\Theta^{T^-}\wedge \Phi^{\F} \iff S\preccurlyeq T.
\)
\end{enumerate}
\end{lm}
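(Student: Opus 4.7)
The plan is to prove both items by unpacking the definitions of $\preccurlyeq$, $\Theta^{T^-}$, and $\Phi^{\F}$, relying on Lemma \ref{LEMCHARSUB_ct} (which handles $\Theta^{T^-}$) and Theorem \ref{LEMPHIF} (which handles $\Phi^{\F}$). The cases $S=\emptyset$ and $T=\emptyset$ fall out of the empty team property and the stipulations made about $\preccurlyeq$, so I may assume both $S$ and $T$ are nonempty.

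For the right-to-left direction of (i), suppose $S\approx R\subseteq T$. Since $T=T^\F$, every element of $R$ has function component $\sim\F$, so $R=R^\F$ is uniform; by Lemma \ref{preccurlyeq_prop}(ii), $S=S^\F$ is uniform too. Theorem \ref{LEMPHIF}(i) then yields $S\models^g \Phi^\F$. From $S\approx R$ and the uniformity of both, one reads off $S^-=(S^\F)^-=(R^\F)^-=R^-\subseteq T^-$, so Lemma \ref{LEMCHARSUB_ct} gives $S\models^g\Theta^{T^-}$.

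For the left-to-right direction of (i), the key step is exhibiting the witness $R$. Given $S\models^g\Theta^{T^-}\wedge\Phi^\F$, Lemma \ref{LEMCHARSUB_ct} yields $S^-\subseteq T^-$ and Theorem \ref{LEMPHIF}(i) yields $S=S^\F$. Define
\[
R := \{(s,\G)\in T \mid s\in S^-\} \subseteq T.
\]
To check $S\approx R$ I verify $(S^\G)^-=(R^\G)^-$ for every $\G\in\FUN$, splitting into two cases. If $\G\not\sim\F$, both $(S^\G)^-$ and $(R^\G)^-$ are empty, since $S=S^\F$ and $R\subseteq T=T^\F$. If $\G\sim\F$, then $(S^\G)^-=S^-$, and on the $R$ side, $(R^\G)^-=R^-=\{s\in S^-\mid (s,\H)\in T\text{ for some }\H\sim\F\}$; because $T=T^\F$ and every $s\in S^-\subseteq T^-$ occurs in some pair of $T$, this set equals $S^-$.

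Item (ii) can either be proved directly by the analogous argument using Theorem \ref{LEMPHIF}(ii) together with the observation that $R:=(S^-,\F)$ is an appropriate witness (since $\G\sim\F$ yields $S=(S^-,\G)\approx(S^-,\F)=R\subseteq T$), or it can be reduced to (i): by Lemma \ref{LEMIDENTIFY} we have $S\models^c\Theta^{T^-}\wedge\Phi^\F$ iff $S^g\models^g\Theta^{(T^g)^-}\wedge\Phi^\F$, and since $T^g=(T^g)^\F$ is uniform, (i) gives this is equivalent to $S^g\preccurlyeq T^g$, which by Lemma \ref{preccurlyeq_prop}(iv) is equivalent to $S\preccurlyeq T$. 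The main obstacle in the whole argument is just the case analysis in constructing $R$ for (i); the essential use of the hypothesis $T=T^\F$ is to ensure that $R\subseteq T$ inherits uniformity and that every $s\in S^-$ has some partner $\H\sim\F$ inside $T$.
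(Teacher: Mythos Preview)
Your proof is correct and follows essentially the same approach as the paper: the same witness $R=\{(s,\G)\in T\mid s\in S^-\}$ with the same case split on $\G\sim\F$ versus $\G\not\sim\F$ for (i), and the same reduction of (ii) to (i) via Lemma~\ref{LEMIDENTIFY} and Lemma~\ref{preccurlyeq_prop}(iv). The only differences are cosmetic (you also sketch an alternative direct argument for (ii)), and your handling of the degenerate cases $S=\emptyset$, $T=\emptyset$ matches the paper's.
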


\begin{proof}
(i)  If $S=\emptyset$, then $\emptyset \models^g\Theta^{T^-}\wedge \Phi^{\F} $ by the empty team property, and $\emptyset\preccurlyeq T$ by stipulation. If $T=\emptyset$, then $\Theta^{T^-}=\bigvee\emptyset=\bot$, and thus $\Theta^{T^-}\wedge \Phi^{\F}\equiv\bot$. Obviously, $S\models^g\bot$ iff $S=\emptyset$ iff $S\preccurlyeq \emptyset=T$. Now assume that $S,T\neq\emptyset$.

``$\Longrightarrow$'': 
Consider 
\(R=\{(s,\G)\in T\mid s\in S^-\}\subseteq T.\) It suffices to show that $S\approx R$, i.e., $(S^{\G})^-=(R^{\G})^-$ for all $\G\in \FUN$. 
If $\G\not\sim\F$, 
since $T=T^\F$ we have that $(R^{\mathcal{G}})^-\subseteq (T^{\mathcal{G}})^-=\emptyset$. On the other hand, since $S\models\Phi^{\F}$, by Theorem \ref{LEMPHIF} we must have that $(S^{\mathcal{G}})^-=\emptyset$. Thus, $(R^{\mathcal{G}})^-=\emptyset=(S^{\mathcal{G}})^-$.
Now, if $\G\sim\F$, then $R^{\mathcal{G}}=R^\F=R$, since $R\subseteq T=T^\F$. As $S\models\Theta^{T^-}$, by Lemma \ref{LEMCHARSUB_ct} we have that $S^-\subseteq T^-$, which, by definition of $R$, implies that $R^-=S^-$. Moreover, since $S\models \Phi^{\F}$, by Theorem \ref{LEMPHIF} we have that $S=S^{\F}=S^{\mathcal{G}}$. Thus, we conclude that $(S^{\mathcal{G}})^-=S^-=R^-=(R^{\mathcal{G}})^-$.

``$\Longleftarrow$'': Suppose $S\approx R\subseteq T=T^\F$. Then $R^\F=R\approx S$. It follows by Lemma \ref{preccurlyeq_prop}(ii) 
that $S^{\F}=S$ and $S^-=(S^\F)^-=(R^\F)^-=R^-\subseteq T^-$. 
Thus, by Theorem \ref{LEMPHIF} and Lemma \ref{LEMCHARSUB_ct} we obtain that $S\models\Phi^\F\wedge\Theta^{T^-}$.

(ii)  Observe that the generalized causal team $T^g=\{(s,\F)\mid s\in T^-\}$ associated with $T$ is uniform, $T^g=(T^g)^\F$ and $T^-=(T^g)^-$. Thus, by Lemma \ref{LEMIDENTIFY}(i), item (i) and Lemma \ref{preccurlyeq_prop}(iv), 
\[
S\models^c\Theta^{T^-}\wedge \Phi^{\F} \iff S^g\models^g\Theta^{(T^g)^-}\wedge \Phi^{\F} \iff S^g\preccurlyeq T^g\iff S\preccurlyeq T.
\]
%
%
\end{proof}


\begin{coro}\label{COROCHARSUB}
For any 
generalized causal teams $S$ and $T$ over $\sigma$, we have that
\[
S\models \bigvee_{\F\in\FUN}(\Theta^{(T^\F)^-}\wedge \Phi^{\F}) \iff S\preccurlyeq T.
\] 

\end{coro}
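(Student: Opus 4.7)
The plan is to reduce the corollary to Lemma \ref{LEMCHARSUB}(i) by splitting both sides of the biconditional along the decomposition $T = \bigcup_{\F \in \FUN} T^\F$. Since $\FUN$ is finite, the disjunction on the left is a well-formed formula, and its semantics reduces (by iterating the binary clause for tensor disjunction on generalized causal teams) to: $S \models \bigvee_{\F \in \FUN}(\Theta^{(T^\F)^-} \wedge \Phi^\F)$ iff there exist $S_\F \subseteq S$, one for each $\F \in \FUN$, with $S = \bigcup_{\F \in \FUN} S_\F$ and $S_\F \models \Theta^{(T^\F)^-} \wedge \Phi^\F$ for every $\F$.

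First I would record the auxiliary fact that $T = \bigcup_{\F \in \FUN} T^\F$: the inclusion $\supseteq$ is immediate, and for $\subseteq$ any $(s, \G) \in T$ belongs to $T^\G$ since $\G \sim \G$. I would also note that every $T^\F$ is uniform with $T^\F = (T^\F)^\F$, so Lemma \ref{LEMCHARSUB}(i) applies to each $T^\F$ with the particular function component $\F$.

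For the left-to-right direction, I would obtain subteams $S_\F$ as above, apply Lemma \ref{LEMCHARSUB}(i) to conclude $S_\F \preccurlyeq T^\F$ for each $\F$, and then invoke Lemma \ref{preccurlyeq_prop}(v) (closure of $\preccurlyeq$ under componentwise unions) to derive
\[
S = \bigcup_{\F \in \FUN} S_\F \preccurlyeq \bigcup_{\F \in \FUN} T^\F = T.
\]
For the right-to-left direction, I would start from $S \preccurlyeq T = \bigcup_{\F \in \FUN} T^\F$ and apply Lemma \ref{preccurlyeq_prop}(vi) to produce subteams $S_\F \preccurlyeq T^\F$ with $S = \bigcup_{\F \in \FUN} S_\F$. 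Lemma \ref{LEMCHARSUB}(i) then gives $S_\F \models \Theta^{(T^\F)^-} \wedge \Phi^\F$ for each $\F$, and the tensor disjunction clause yields $S \models \bigvee_{\F \in \FUN}(\Theta^{(T^\F)^-} \wedge \Phi^\F)$.

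There is no real obstacle here; the only point requiring a moment's care is that items (v) and (vi) of Lemma \ref{preccurlyeq_prop} are exactly tailored for set-theoretic unions of generalized causal teams, which is precisely the reading of $\bigvee$ on gcts used elsewhere in the paper (e.g.\ in the proof of Theorem \ref{PROPEQUIV}). Granted this alignment, the corollary is essentially a packaging of Lemma \ref{LEMCHARSUB}(i) over the finite family $\{T^\F\}_{\F \in \FUN}$.
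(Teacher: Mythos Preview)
Your proposal is correct and follows essentially the same approach as the paper: both use the decomposition $T=\bigcup_{\F\in\FUN}T^\F$, apply Lemma~\ref{LEMCHARSUB}(i) to each uniform piece $T^\F$, and then invoke Lemma~\ref{preccurlyeq_prop}(v) and (vi) to pass between the componentwise statements and $S\preccurlyeq T$. The only cosmetic difference is that the paper singles out the case $T=\emptyset$ explicitly (where $\Theta^{(T^\F)^-}\equiv\bot$), whereas your argument handles it uniformly through the empty team property and the stipulation $\emptyset\preccurlyeq T$.
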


\begin{proof}
Put $\displaystyle\psi= \bigvee_{\F\in\FUN}(\Theta^{(T^\F)^-}\wedge \Phi^{\F})$. 
If $T=\emptyset$, then $T^\F=\emptyset$ for all $\F\in\FUN$, so that $\Theta^{(T^\F)^-} \equiv^g \bot$ and thus $\psi\equiv^g\bot$. Then, $S\models\psi$ iff $S=\emptyset$ iff $S \preccurlyeq \emptyset=T$.

Now, assume that $T\neq\emptyset$. For each $\F\in \FUN$, the team $T^\F$ is uniform. Thus, 
\begin{align*}
 &S\models \bigvee_{\F\in\FUN}(\Theta^{(T^\F)^-}\wedge \Phi^{\F})\\
 \iff& \forall \F\in \FUN\exists S_\F: S=\bigcup_{\F\in \FUN}S_\F\text{ and each }S_\F\models \Theta^{(T^\F)^-}\wedge \Phi^{\F}\\
 \iff& \forall \F\in \FUN\exists S_\F: S=\bigcup_{\F\in \FUN}S_\F\text{ and each }S_\F\preccurlyeq T^\F\tag{by Lemma \ref{LEMCHARSUB}}\\
 \iff& S\preccurlyeq T.\tag{by Lemma \ref{preccurlyeq_prop}(v,vi), 
 since $\displaystyle T=\bigcup_{\F\in \FUN}T^\F$}
 \end{align*}
\end{proof}

We are now ready to prove the main theorem of the section.

\vspace{5pt}

\begin{lateproof}{Theorem \ref{TEOCHARCOCT}}
It suffices to prove the direction ``$\Longleftarrow$". We first consider the generalized causal team case. Let $\K$ be a nonempty class of generalized causal teams that is flat and closed under equivalence. Put $T=\bigcup \K$. Since $\K$ is closed under unions, $T\in \K$. Consider the $\CO$-formula 
\begin{equation*}
    \varphi= \bigvee_{\F\in\FUN}(\Theta^{(T^\F)^- }\land \Phi^{\F }).
\end{equation*}
We show that $\K_\varphi=\K$. 
For any generalized causal team $S$, we first deduce, by applying Corollary \ref{COROCHARSUB}  that
\[S\in \K_\varphi\iff S\models^g\varphi\iff S\preccurlyeq \bigcup \K.\]
Now assume $S\in\K_\varphi$; since then $S\preccurlyeq \bigcup \K$, and since $\bigcup \K\in\K$ and $\K$ is closed under $\succcurlyeq$, we have that 
$S\in \K$, which gives $\K_\varphi\subseteq\K$. 
Conversely, if $S\in \K$, then obviously $S\approx S\subseteq  \bigcup \K$, namely, $S\preccurlyeq \bigcup \K$, from which we conclude that $S\in \K_\varphi$. 


Next, we consider the causal team case. Let $\K$ be a nonempty class of causal teams that is flat and closed under equivalence. For each $\F\in \FUN$, let \[(T_\F)^-=\bigcup\{T^-\mid (T^-,\G)\in\K\text{ for some }\G\sim\F\}.\]
Since $\K$ is closed under unions and equivalence, $((T_\F)^-,\F)\in \K$.
Consider the {\em generalized} causal team $T=\bigcup\K^g$, where \(\K^g=\{T^g\mid T\in \K\}\). Observe that $(T^\F)^-=(T_\F)^-$.
Consider the above-defined $\CO$-formula $\varphi$ with respect to the generalized causal team $T$. Thus, $\K_\varphi=\K$ over causal teams, since for any causal team $S=(S^-,\G)$,
%
\begin{align*}
    S\models^c\varphi&\iff S^g\models^g\varphi\tag{Lemma \ref{LEMIDENTIFY}(i)}\\
    &\iff S^g\preccurlyeq T\tag{Corollary \ref{COROCHARSUB}}\\
    &\iff S^g=(S^g)^{\G} \preccurlyeq T^{\G} \tag{Lemma \ref{preccurlyeq_prop}(iii)} 
    \\
    &\iff S\preccurlyeq ((T_\G)^-,\G)
    \tag{by Lemma \ref{preccurlyeq_prop}(iv), since $(T^\G)^-=(T_\G)^-$}\\
     &\iff S\in \K.\tag{since $((T_\G)^-,\G)\in\K$ and $\K$ is closed under $\succcurlyeq$}
\end{align*}
\end{lateproof}

\noindent In the proof above, the case for causal teams is obtained by a reduction to the generalized case. For a direct proof, see \cite{BarYan2020}.



We end this section with some remarks on the sublanguage of $\Co$ without the operator $\cf$. 
We show that this language is \emph{strictly} less expressive than $\Co$ (and similarly for $\COv$ and $\COd$).

\begin{lm}\label{LEMWOCF}
Let $\varphi$ be a $\COV$ or  $\COD$ formula without occurrences of $\cf$. For any (generalized) causal teams $T,S$ over $\sigma$ with $T^-=S^-$, $T\models \varphi \iff S\models \varphi$.
\end{lm}

\begin{proof}
By a straightforward induction on $\varphi$.
\end{proof}

\begin{prop}
Let $\sigma=(\Dom,\Ran)$ be a signature such that $|\Ran(V)|\geq 2$ for at least one variable $V\in\Dom$. Then there is a class of (generalized) causal teams that is definable by a $\CO$ formula, but  not definable by any $\CO$ formula without occurrences of $\cf$.
\end{prop}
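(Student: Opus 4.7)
The plan is to exhibit a specific $\CO$-definable class $\K$ whose definition essentially requires $\cf$, and to use Lemma \ref{LEMWOCF} to rule out any $\cf$-free definition. First I would fix any $\F \in \FUN$ together with an assignment $s$ compatible with $\F$, and take $\K := \K_{\Phi^\F}$. Since $\Phi^\F \in \CO$, this class is $\CO$-definable by construction. By Theorem \ref{LEMPHIF}, in the generalized setting $\K$ consists exactly of those generalized causal teams $T$ with $T = T^\F$ (equivalently, every $(t, \G) \in T$ has $\G \sim \F$), while in the causal-team setting $\K$ consists of the empty causal team together with those $(T^-, \G)$ satisfying $\G \sim \F$; in both cases flatness and closure under equivalence (Theorems \ref{TEOGENDW} and \ref{PROPEQUIV}) are automatic, so $\K$ really is a well-behaved class.

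Next I would use the cardinality hypothesis to extract some $\mathcal H \in \FUN$ with $\mathcal H \not\sim \F$ such that $s$ remains compatible with $\mathcal H$. Concretely: pick a variable $V_0$ with $|\Ran(V_0)| \geq 2$; after passing to an equivalent choice of $\F$ if necessary, arrange that $V_0$ is exogenous in $\F$, and build $\mathcal H$ by declaring $V_0$ endogenous with a non-constant structural function whose value at the parent configuration determined by $s$ equals $s(V_0)$. Then $V_0 \in \End(\mathcal H) \setminus \Con(\mathcal H)$ whereas $V_0 \notin \End(\F) \setminus \Con(\F)$, so $\mathcal H \not\sim \F$ by Definition \ref{DEFSIMFG}, while compatibility of $s$ with $\mathcal H$ is guaranteed by construction.

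Finally, I would argue by contradiction: assume $\K = \K_\theta$ for some $\cf$-free $\theta$. In the generalized case, $\{(s,\F)\} \in \K$ yields $\{(s,\F)\} \models \theta$, whereas $\{(s,\mathcal H)\} \notin \K$ since $\mathcal H \not\sim \F$. But the team components $\{s\}$ of $\{(s,\F)\}$ and $\{(s,\mathcal H)\}$ coincide, so Lemma \ref{LEMWOCF} forces $\{(s,\mathcal H)\} \models \theta$, a contradiction. The causal-team case is identical, comparing $(\{s\},\F) \in \K$ with $(\{s\},\mathcal H) \notin \K$. The main obstacle I anticipate is precisely the construction of $\mathcal H$ from the bare cardinality hypothesis: for reasonably rich signatures the recipe above is straightforward, but a little care is needed to ensure that the modification actually changes the $\sim$-class while preserving compatibility with $s$, and one may wish to record a minimal side condition on the signature under which the whole argument goes through.
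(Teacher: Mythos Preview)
Your argument is correct and follows the same overall shape as the paper's---pick a class that separates $\{(s,\F)\}$ from $\{(s,\mathcal H)\}$ for some $\mathcal H\not\sim\F$, then invoke Lemma~\ref{LEMWOCF}---but it differs in one worthwhile respect. The paper takes $\K=\wp(\{(s,\G)\mid \G\sim\F\})$ (respectively $\{\emptyset\}\cup\{(\{s\},\G)\mid \G\sim\F\}$) and appeals to the characterization Theorem~\ref{TEOCHARCOCT} to conclude that $\K$ is $\CO$-definable; you instead take $\K=\K_{\Phi^\F}$, which is $\CO$-definable \emph{by construction}, so you never need the characterization theorem at all. This makes your route more elementary and self-contained. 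Conversely, the paper's choice has the virtue of exhibiting a concrete minimal witness (a class of singleton-or-empty teams), and of illustrating the characterization theorem in action.

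One small remark on your construction of $\mathcal H$: your recipe ``arrange that $V_0$ is exogenous in $\F$ and then make it non-constantly endogenous in $\mathcal H$'' tacitly assumes $V_0\notin\End(\F)\setminus\Con(\F)$; if instead $V_0\in\End(\F)\setminus\Con(\F)$ you cannot pass to an equivalent $\F$ with $V_0$ exogenous, but then the symmetric move (take $\mathcal H$ with $V_0$ exogenous) already gives $\mathcal H\not\sim\F$ with $s$ trivially compatible. You are right that the bare hypothesis ``some variable has range $\geq 2$'' is not quite enough to guarantee such an $\mathcal H$ exists (if only one variable has nontrivial range, every function component has $\End\setminus\Con=\emptyset$ and all are $\sim$-equivalent); the paper's proof has the same tacit restriction, and your closing caveat about a minimal side condition on $\sigma$ is on point.
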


\begin{proof}
Let $(s,\F)\in\SEM$. Consider the  class of generalized causal teams $\K=\wp(\{(s,\G)\mid \G\sim \F\})$. 
This 
class is flat 
and closed under equivalence, so by Theorem \ref{TEOCHARCOCT} it is definable by a formula of $\CO$. On the other hand, observe that, given the cardinality assumption on $\sigma$, there is an $\mathcal H\in \FUN$ such that $\mathcal H\not\sim \F$. Now, $\{(s,\mathcal H)\}\notin\K$ and also $\{(s,\F)\}\in\K$. For any formula $\theta$ of $\CO$ without occurrences of $\cf$, by Lemma \ref{LEMWOCF}, we have that $\{(s,\mathcal H)\}\models\theta$ iff $\{(s,\F)\}\models\theta$, that is, $\{(s,\mathcal H)\}\in \K_\theta$ iff $\{(s,\F)\}\in \K_\theta$. Hence, $\K_\theta \neq \K$. 


For causal team semantics, we reason similarly by considering instead  the class of causal teams $\K=
\{\emptyset\}\cup \{(\{s\},\G)\mid \G \sim \F\}$.
\end{proof}

This proposition implies 
that causal discourse cannot be entirely reduced to the (propositional) pure team setting, in analogy with the motto that causation cannot be explained in terms of correlation.\footnote{On the other hand, the quantifiers of \emph{first-order} team semantics act as dynamic operators, and thus they allow, to some extent, to simulate the effect of interventions. This is discussed in \cite{BarGal2020}.}
A further simple consequence of the above proposition is the undefinability of $\cf$ in terms of the other connectives (see \cite{Yan2017,CiaBar2019} for details on the notion of definability of connectives in the context of team semantics).

\subsection{Deduction system}\label{SUBSAXCO}

The logic $\Co$ over (recursive) causal teams was axiomatized in \cite{BarSan2019} by means of a sound and complete Hilbert-style deduction system. 
In this section, we present an equivalent system of natural deduction and show it to be sound and complete also over (recursive) \emph{generalized} causal teams. 

Recall that we have confined ourselves to {\em recursive} (generalized) causal teams only. The deduction system we introduce in this section will therefore contain some rules that are not valid over nonrecursive causal teams (such as the rule $\cf\textsf{I}$). 
Furthermore, it will have a rule (labelled as \textsf{Recur}) that characterizes recursiveness. In order to present this rule \textsf{Recur} in a compact form, following \cite{Hal2000},
we write $X\leadsto Y$ (``$X$ causally affects $Y$'') as an abbreviation for the $\CO$ formula:
\begin{align*}
\bigvee \Big\{\SET Z &= \SET z \cf \big((X=x \cf Y=y) \land (X=x' \cf Y=y')\big)\\
&\mid \SET Z\subseteq \Dom\setminus\{X,Y\},\SET z\in \Ran(\SET Z), x,x'\in \Ran(X), y,y'\in \Ran(Y),x\neq x',y\neq y'\Big\}.
\end{align*}
This formula states that there is some (possibly null) intervention $do(\SET Z = \SET z)$ after which intervening further on $X$ may alter the value of $Y$. It is in a sense the weakest possible statement of causal dependence between $X$ and $Y$.

\begin{df}\label{co-system-df}
The system of natural deduction for $\CO$ consists of the following rules: 

\begin{itemize}
\item[-] (Parameterized) rules for value range assumptions:
\begin{center}
{\normalfont
\renewcommand{\arraystretch}{1.8}
\def\ScoreOverhang{0.5pt}
\def\defaultHypSeparation{\hskip .1in}
\begin{tabular}{|C{0.45\linewidth}C{0.45\linewidth}|}
\hline
 \AxiomC{} \noLine\UnaryInfC{} \RightLabel{$\textsf{ValDef}$}\UnaryInfC{$\bigvee_{x\in \Ran(X)} X=x$} \noLine\UnaryInfC{}\DisplayProof & \AxiomC{}\noLine\UnaryInfC{$X=x$} \RightLabel{$\textsf{ValUnq}$ \hspace{5pt} (for $x \neq x'$) }\UnaryInfC{$X\neq x'$} \noLine\UnaryInfC{}\DisplayProof\\\hline
 \end{tabular}
 }
\end{center}

\item[-] Rules for $\wedge,\vee,\neg$:

\begin{center}
{\normalfont
\renewcommand{\arraystretch}{1.8}
\begin{tabular}{|C{0.45\linewidth}C{0.45\linewidth}|}
\hline


 \AxiomC{$\varphi$}\AxiomC{$\psi$} \RightLabel{$\wedge\textsf{I}$}\BinaryInfC{$\varphi\wedge\psi$} \DisplayProof&
 
 \AxiomC{$\varphi\wedge\psi$} \RightLabel{$\wedge\textsf{E}$}\UnaryInfC{$\varphi$} \DisplayProof~
   \AxiomC{$\varphi\wedge\psi$} \RightLabel{$\wedge\textsf{E}$}\UnaryInfC{$\psi$} \DisplayProof\\
 
 
 \multirow{2}{*}{\AxiomC{$\varphi$} \RightLabel{$\vee\textsf{I}$}\UnaryInfC{$\varphi\vee\psi$}\DisplayProof~
 \AxiomC{$\varphi$} \RightLabel{$\vee\textsf{I}$}\UnaryInfC{$\psi\vee\varphi$}\DisplayProof}
 
 &
 
 \AxiomC{$\varphi\vee\psi$}\AxiomC{$[\varphi]$}\noLine\UnaryInfC{$\vdots$}\noLine\UnaryInfC{$\alpha$} \AxiomC{}\noLine\UnaryInfC{$[\psi]$}\noLine\UnaryInfC{$\vdots$}\noLine\UnaryInfC{$\alpha$}\RightLabel{$\vee\textsf{E}$} \TrinaryInfC{$\alpha$}\noLine\UnaryInfC{}\noLine\UnaryInfC{}\DisplayProof\\
 
  
 \multicolumn{2}{|c|}{\AxiomC{$[\alpha]$}\noLine\UnaryInfC{$\vdots$}\noLine\UnaryInfC{$\bot$} \RightLabel{$\neg\textsf{I}$}\UnaryInfC{$\neg\alpha$} \noLine\UnaryInfC{}\DisplayProof
\quad\quad\AxiomC{$\alpha$}\AxiomC{$\neg\alpha$}\RightLabel{$\neg\textsf{E}$}\BinaryInfC{$\varphi$} \DisplayProof
\quad\quad \AxiomC{$[\neg\alpha]$}\noLine\UnaryInfC{$\vdots$}\noLine\UnaryInfC{$\bot$}\RightLabel{\textsf{RAA}}\UnaryInfC{$\alpha$} \noLine\UnaryInfC{}\DisplayProof}\\


\hline
\end{tabular}
}
\end{center}

\item[-] Rules for $\cf$: 

{\normalfont
\renewcommand{\arraystretch}{1.7}
\def\ScoreOverhang{0.5pt}
\def\defaultHypSeparation{\hskip .03in}
\begin{center}
\begin{tabular}{|C{0.95\linewidth}|}
\hline

\AxiomC{}\noLine\UnaryInfC{}\noLine\UnaryInfC{}\noLine\UnaryInfC{} \RightLabel{{\footnotesize$\cf\!\textsf{Eff}$}}\UnaryInfC{$(\SET{X}=\SET{x} \land Y=y) \cf Y=y$} \DisplayProof
\quad\quad\quad\quad\AxiomC{}\noLine\UnaryInfC{}\noLine\UnaryInfC{$ \SET X=\SET x$\ \ \ }  \AxiomC{$\theta$}\RightLabel{\footnotesize$\cf\textsf{I}$ (1)}\BinaryInfC{$\SET X=\SET x\cf \theta$} \DisplayProof
 \\
  

 \AxiomC{}\noLine\UnaryInfC{}\noLine\UnaryInfC{}\noLine\UnaryInfC{}\RightLabel{\footnotesize$\textsf{ex falso}_{\cf}$\,(for $x\neq x'$)}\UnaryInfC{$(\SET Y=\SET y\land X=x\land X=x')\cf \varphi$}\DisplayProof\!\!\!\!\AxiomC{}\noLine\UnaryInfC{}\noLine\UnaryInfC{$\SET{X}=\SET{x}\cf \bot$}\RightLabel{\footnotesize$\cf\!\bot\textsf{E}$ (2)}\UnaryInfC{$\varphi$}\DisplayProof
%
%
\\
%
\AxiomC{$\SET{X}=\SET{x} \cf \varphi$}\AxiomC{}\noLine\UnaryInfC{}\noLine\UnaryInfC{$[\SET{X}=\SET{x}]$}\noLine\UnaryInfC{$D_1$}\noLine\UnaryInfC{$\SET{Y}=\SET{y}$}\AxiomC{}\noLine\UnaryInfC{}\noLine\UnaryInfC{$[\SET{Y}=\SET{y}]$}\noLine\UnaryInfC{$D_2$}\noLine\UnaryInfC{$\SET{X}=\SET{x}$} \RightLabel{\footnotesize$\cf\!\!\textsf{Rpl}_A$\,(3)}\TrinaryInfC{$\SET{Y}=\SET{y} \cf \varphi$}
\DisplayProof
\quad \AxiomC{$\SET{X}=\SET{x} \cf \varphi$}\AxiomC{}\noLine\UnaryInfC{}\noLine\UnaryInfC{$[\varphi]$}\noLine\UnaryInfC{$D$}\noLine\UnaryInfC{$\psi$} \RightLabel{\footnotesize$\cf\!\!\textsf{Rpl}_C$\,(3)}\BinaryInfC{$\SET{X}=\SET{x} \cf \psi$}
\DisplayProof
\\


\AxiomC{}\noLine\UnaryInfC{}\noLine\UnaryInfC{}\noLine\UnaryInfC{$\SET{X}=\SET{x}\cf \varphi$~~} \AxiomC{}\noLine\UnaryInfC{~~$\SET{X}=\SET{x}\cf \psi$} \RightLabel{\footnotesize$\cf\!\!\wedge\textsf{I}$} \BinaryInfC{$\SET{X}=\SET{x}\cf \varphi\land \psi$}\noLine\UnaryInfC{} \DisplayProof
\quad\quad\quad\AxiomC{}\noLine\UnaryInfC{}\noLine\UnaryInfC{}\noLine\UnaryInfC{$\neg(\SET{X}=\SET{x}\cf \alpha)$}\RightLabel{\footnotesize$\neg\!\cf\!\textsf{E}$ }\UnaryInfC{$\SET{X}=\SET{x}\cf \neg\alpha$} \noLine\UnaryInfC{}\DisplayProof
\\ 
~~\AxiomC{}\noLine\UnaryInfC{$\SET{X}=\SET{x} \cf (\SET{Y}=\SET{y} \cf \varphi)$}\RightLabel{\footnotesize$\cf\!\textsf{Extr}$ (4)~~}\UnaryInfC{$(\SET{X'}=\SET{x'} \land \SET{Y}=\SET{y}) \cf \varphi$}\DisplayProof
\AxiomC{$(\SET{X}=\SET{x} \land \SET{Y}=\SET{y}) \cf \varphi$}\RightLabel{\footnotesize$\cf\!\textsf{Exp}$ (5)}\UnaryInfC{$\SET{X}=\SET{x} \cf (\SET{Y}=\SET{y} \cf \varphi)$} \DisplayProof\!\!\!\!\!\!\!\\

\AxiomC{}\noLine\UnaryInfC{}\noLine\UnaryInfC{}\noLine\UnaryInfC{$X_1 \leadsto X_2  \hspace{10pt} \dots \hspace{10pt} \dots  \hspace{10pt} X_{k-1} \leadsto X_k$}\RightLabel{\footnotesize$\textsf{Recur}$\,}
\UnaryInfC{$\neg(X_k \leadsto X_1)$} \noLine\UnaryInfC{}\DisplayProof
\\

 
\multicolumn{1}{|L{1\linewidth}|}{{\footnotesize (1) $\theta$ is $\cf$-free. (2) $\SET X=\SET x$ is consistent. 
(3) The derivations $D_1,D_2,D$ do not have any undischarged assumptions. 
(4) $\SET X=\SET x$ is consistent, $\SET{X'} = \SET X \setminus \SET Y$, $\SET{x'} = \SET x \setminus \SET y$. 
(5)  $\SET{X}\cap \SET Y = \emptyset$. 
}
}\\
\hline
\end{tabular}
\end{center}
}


\end{itemize}

\end{df}







Note that the above system is parametrized with the signature $\sigma$. 
We write $\Gamma\ded\varphi$ (or simply $\Gamma\vdash\varphi$ when $\sigma$ is clear from the context) if the formula $\varphi$ can be derived from $\Gamma$ by applying the rules in  the above system. 

The connective rules are as in \cite{YanVaa2016}. The axioms \textsf{ValDef} and \textsf{ValUnq} are special cases of the \emph{Definiteness} and \emph{Uniqueness} axioms of Galles and Pearl (\cite{GalPea1998}). The rules $\cf$\textsf{Eff} 
and \textsf{Recur} correspond to the \emph{Effectiveness}
and \emph{Recursivity} from \cite{GalPea1998}. We remark that \textsf{Recur} is actually a scheme of rules (one rule for each $1<k\in\mathbb{\mathbb{N}}$. 
Its formulation strongly depends on the finitude of the signature, but it could be in principle replaced by a more elementary rule that does not have this requirement.\footnote{The possibility of expressing recursivity with infinite signatures is suggested in a footnote of \cite{Hal2000} and implemented as a rule in \cite{Bri2012}.
}
Furthermore, the role of $\leadsto$ in the formulation of  \textsf{Recur} could be taken instead by the relation of \emph{direct cause}, as shown in \cite{BarSchSmeVelXie2020}. 
Rule $\cf\textsf{I}$ is a rule of \emph{conjunction conditionalization}, from which the \emph{Composition} rule of \cite{GalPea1998} is derivable, as we show in the next proposition.  The rules $\cf\!\!\land$\textsf{I}, $\cf\!\!\lor$\textsf{E} and $\neg\!\cf\!\textsf{E}$ correspond to (part of) some additional rules introduced by Halpern (\cite{Hal2016}) for the purpose of reducing the complexity of consequents of unnested counterfactuals. Since we also allow right-nested counterfactuals, we need further rules $\cf$\textsf{Extr} (\emph{Extraction}) 
and $\cf$\textsf{Exp}   (\emph{Exportation}) to remove or add occurrences of $\cf$ from the consequents. 
A comment might be due concerning the notation used in $\cf$\textsf{Extr}. As $\SET X=\SET x$ abbreviates a conjunction of equalities $X_1=x_1 \land \dots \land X_n=x_n$, in the context of this rule $\SET X'=\SET x'$ denotes the formula obtained from $\SET X=\SET x$ after removing all conjuncts $X_i=x_i$ such that the variable $X_i$ is in $\SET Y$. The four remaining rules,  $\cf$\textsf{Rpl}$_A$, $\cf$\textsf{Rpl}$_C$, $\textsf{ex falso}_{\cf}$ and $\cf\!\bot\textsf{E}$, were recognized in \cite{BarSan2019} as implicitly used in earlier completeness proofs; they concern replacement in  antecedents and consequents of counterfactuals, and the treatment of inconsistent antecedents/consequents. We remark that a few of the rules in Definition \ref{co-system-df} and derived rules in Proposition \ref{derivable_rules_co} below are formulated using  the initial Greek letters $\alpha,\beta$;
we do this to emphasize that these rules are still sound for the extended languages $\COd$ and $\COv$ only when $\alpha,\beta$ are restricted to $\Co$-formulas. 

It is easy to verify that all rules in our system are sound over recursive  (generalized) causal teams. 
%
%
%
%
The axioms and rules in the deduction system of \cite{BarSan2019} are either included or derivable in our natural deduction system, as shown in the next proposition.



\begin{prp}\label{derivable_rules_co}
The following are derivable in the system for $\CO$: 
\begin{enumerate}
\item[(i)]\label{derivable_rules_co_mp}  $\alpha,\neg\alpha\vee\beta\vdash \beta$ (weak modus ponens) 
\item[(ii)]\label{derivable_rules_co_unq} $\SET{X}=\SET{x}\cf Y=y\vdash \SET{X}=\SET{x}\cf Y\neq y'$ (Uniqueness)
\item[(iii)]\label{derivable_rules_cf_cmp} $\SET X=\SET x\cf W=w,\SET X=\SET x\cf \theta\vdash(\SET X= \SET x\wedge W=w)\cf \theta$ whenever $\theta$ is $\cf$-free (Composition)
\item[(iv)]\label{derivable_rules_co_extr_conj} $\SET{X}=\SET{x}\cf \varphi\land \psi\vdash \SET{X}=\SET{x}\cf \varphi$ (Extraction of conjuncts)
\item[(v)]\label{derivable_rules_co_extr_neg} If $\SET X = \SET x$ is consistent, $\SET{X}=\SET{x}\cf \neg\alpha \vdash \neg(\SET{X}=\SET{x}\cf \alpha)$ 
\item[(vi)]\label{derivable_rules_cf_v_Dst} $\SET{X}=\SET{x}\cf \varphi\vee \psi\dashv\vdash (\SET{X}=\SET{x}\cf \varphi)\vee(\SET{X}=\SET{x}\cf \psi)$


\item[(vii)]\label{derivable_rules_cf_df} $\bigvee_{y\in \Ran(Y)} (\SET{X}=\SET{x}\cf Y=y)$ (Definiteness) 
\end{enumerate}
\end{prp}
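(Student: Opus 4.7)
The plan is to treat the seven items in roughly increasing order of difficulty, reusing earlier items as they are proved. For the three easy items (i), (ii), and (iv) I would apply one or two rules directly: (i) is $\vee\textsf{E}$ on $\neg\alpha\vee\beta$, where in the $[\neg\alpha]$ branch the premise $\alpha$ yields $\beta$ by $\neg\textsf{E}$; (ii) is $\cf\textsf{Rpl}_C$ with the inner derivation $[Y=y]\vdash Y\neq y'$ provided by $\textsf{ValUnq}$; and (iv) is $\cf\textsf{Rpl}_C$ with $\wedge\textsf{E}$ as the inner derivation.

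For (iii) Composition I would apply $\cf\!\!\wedge\textsf{I}$ to the two premises to form $\SET X=\SET x\cf(W=w\wedge\theta)$, then $\cf\textsf{Rpl}_C$ with the sub-derivation $[W=w\wedge\theta]\vdash W=w\cf\theta$ (legal because $\theta$ is $\cf$-free, so $\cf\textsf{I}$ applies), and finally $\cf\textsf{Extr}$ to merge the nested antecedents (under the standard assumption $W\notin\SET X$, so that $\SET X'=\SET X$; the degenerate case is absorbed by $\textsf{ex falso}_\cf$). For (vii) Definiteness I would proceed by $\textsf{RAA}$: assume the negation of the goal, peel off each conjunct by $\neg\textsf{E}$ to obtain every $\neg(\SET X=\SET x\cf Y=y)$, apply $\neg\!\cf\!\textsf{E}$ and iterated $\cf\!\!\wedge\textsf{I}$ to package them into $\SET X=\SET x\cf\bigwedge_{y\in\Ran(Y)}Y\neq y$, then use $\cf\textsf{Rpl}_C$ with an inner contradiction obtained from $\textsf{ValDef}$ to reach $\SET X=\SET x\cf\bot$, and conclude via $\cf\!\bot\textsf{E}$. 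Item (v) is analogous: given $\SET X=\SET x\cf\neg\alpha$, assume $[\SET X=\SET x\cf\alpha]$ for $\neg\textsf{I}$, combine via $\cf\!\!\wedge\textsf{I}$, collapse the consequent $\alpha\wedge\neg\alpha$ to $\bot$ with $\cf\textsf{Rpl}_C$, invoke $\cf\!\bot\textsf{E}$, and close by $\neg\textsf{I}$.

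The main obstacle is the left-to-right half of (vi), where a tensor disjunction inside the counterfactual has to be lifted to the outside. My plan is a classical detour built inside and around the counterfactual. First, $\cf\textsf{Rpl}_C$ with an inner De~Morgan derivation (available because $\textsf{RAA}$, $\vee\textsf{E}$, $\wedge\textsf{E}$, $\neg\textsf{I}$, and $\neg\textsf{E}$ are all in the system) rewrites $\varphi\vee\psi$ as $\neg(\neg\varphi\wedge\neg\psi)$ in the consequent; then the already-derived (v) pulls the outer negation out to give $\neg(\SET X=\SET x\cf(\neg\varphi\wedge\neg\psi))$; a short contrapositive argument using $\cf\!\!\wedge\textsf{I}$ turns this into $\neg((\SET X=\SET x\cf\neg\varphi)\wedge(\SET X=\SET x\cf\neg\psi))$; classical De~Morgan at the top level produces $\neg(\SET X=\SET x\cf\neg\varphi)\vee\neg(\SET X=\SET x\cf\neg\psi)$; and a final $\vee\textsf{E}$, where in each branch $\neg\!\cf\!\textsf{E}$ followed by $\cf\textsf{Rpl}_C$ with double-negation elimination turns $\neg(\SET X=\SET x\cf\neg\varphi)$ into $\SET X=\SET x\cf\varphi$, delivers the goal. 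The converse half of (vi) is easy: $\vee\textsf{E}$ on the antecedent together with $\cf\textsf{Rpl}_C$ and $\vee\textsf{I}$ in each branch. Throughout, the delicate points are the side conditions on $\cf\textsf{Rpl}_C$, $\cf\textsf{Extr}$, and $\cf\!\bot\textsf{E}$ (consistency of antecedents and absence of undischarged assumptions in the sub-derivations), all of which can be met with routine bookkeeping.
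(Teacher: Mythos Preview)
Your proposal is essentially correct and, for items (i)--(vi), follows the same route as the paper almost step for step: the paper also derives (v) via $\cf\!\!\wedge\textsf{I}$, $\cf\!\bot\textsf{E}$ and $\neg\textsf{I}$, and it proves the left-to-right half of (vi) by the same classical detour through De~Morgan inside the counterfactual, then (v), then a contrapositive of $\cf\!\!\wedge\textsf{I}$ (with $\neg\!\cf\!\textsf{E}$), and De~Morgan again at the top level.

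Two small edge cases you glossed over are handled explicitly in the paper. In (iii), if $W\in\SET X$ with the \emph{same} value $w$, the conjunction $\SET X=\SET x\wedge W=w$ is consistent, so $\textsf{ex falso}_{\cf}$ does not apply; the paper uses $\cf\textsf{Rpl}_A$ there. In (vi), your argument invokes (v), which requires a consistent antecedent; the paper disposes of the inconsistent case up front via $\textsf{ex falso}_{\cf}$. Both are routine fixes.

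The one genuine difference is (vii). You prove Definiteness directly by $\textsf{RAA}$, packaging all $\SET X=\SET x\cf Y\neq y$ into $\SET X=\SET x\cf\bot$. The paper instead first obtains $\SET X=\SET x\cf X_i=x_i$ from $\cf\textsf{Eff}$, replaces the consequent by $\bigvee_{y}Y=y$ using $\textsf{ValDef}$ and $\cf\textsf{Rpl}_C$, and then simply distributes via the already-proved (vi). Your argument is self-contained (it does not rely on (vi)) but slightly longer; the paper's is shorter because it reuses (vi) as a lemma.
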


\begin{proof}
Item (i
) follows  from $\neg\textsf{E}$ and $\vee\textsf{E}$. 
Items (ii
),(iv
) follow from $\textsf{ValUnq}$,  $\wedge\textsf{E}$ and $\cf\!\textsf{\corrb{Rpl}}_C$. 
For item (v
), we first derive by applying $\cf\!\wedge\textsf{I}$, and $\cf\!\bot\textsf{E}$  that
\begin{center}$
\SET{X}=\SET{x}\cf \neg\alpha, \SET{X}=\SET{x}\cf \alpha\vdash \SET{X}=\SET{x}\cf \neg\alpha\wedge\alpha\vdash\SET{X}=\SET{x}\cf\bot  
\vdash \bot
$\end{center}
Then, by $\neg\textsf{I}$ we conclude that $\SET{X}=\SET{x}\cf \neg\alpha\vdash \neg(\SET{X}=\SET{x}\cf \alpha)$.

For item (iii
), if $W\in \SET X$, then we derive $\SET X=\SET x\cf \theta\vdash(\SET X= \SET x\wedge W=w)\cf \theta$ by $\cf\textsf{\corrb{Rpl}}_A$ (if $W=w$ is among $\SET X=\SET x$) or $\textsf{ex falso}_{\cf}$
(if $W=w'$ is among $\SET X=\SET x$ for some $w'\neq w$). Now assume $W\notin \SET X$. Then we have that
\begin{align*}
&\SET X=\SET x\cf W=w,\SET X=\SET x\cf\theta\\
\vdash~& \SET X=\SET x\cf (W=w\wedge \theta)\tag{by $\cf\wedge\textsf{I}$}\\
\vdash~& \SET X=\SET x\cf (W=w\cf \theta)\tag{by $\land\textsf{E},\cf\textsf{I}$ and $\cf\textsf{\corrb{Rpl}}_C$}\\
\vdash~& (\SET X=\SET x\wedge W=w)\cf \theta\tag{by  $\cf\textsf{Extr}$}.
\end{align*}

For item (vi
), we first consider the case that $\SET X =\SET x$ is inconsistent. From right to left, we then have $\vdash \SET{X}=\SET{x}\cf \varphi\vee \psi$ by rule $\textsf{ex falso}_{\cf}$. From left to right, we can first derive $\SET{X}=\SET{x}\cf \varphi$ by  $\textsf{ex falso}_{\cf}$ and then $\vdash (\SET{X}=\SET{x}\cf \varphi)\vee (\SET{X}=\SET{x}\cf\psi)$ by $\vee\textsf{I}$. Now assume that $\SET X =\SET x$ is consistent.

For the right-to-left direction, first we derive 
\[\SET{X}=\SET{x}\cf\varphi\vdash \SET{X}=\SET{x}\cf\varphi\vee\psi\text{ and }\SET{X}=\SET{x}\cf\psi\vdash \SET{X}=\SET{x}\cf\varphi\vee\psi\] 
by applying $\cf \corrb{\textsf{Rpl}}_C$. Thus, by $\vee\textsf{E}$ we conclude that  $(\SET{X}=\SET{x}\cf\varphi)\vee(\SET{X}=\SET{x}\cf\psi)\vdash$ $\vdash\SET{X}=\SET{x}\cf\varphi\vee\psi$. 


For the left-to-right direction, we first use De Morgan's laws (which are easily derivable in the usual way) plus $\cf\!\textsf{\corrb{Rpl}}_C$ to derive from $\SET X =\SET x\cf \varphi \lor \psi$ that $\SET X =\SET x\cf \neg(\neg\varphi \land \neg \psi)$, which then gives, by item (v), that  $\neg\big(\SET X =\SET x\cf (\neg\varphi \land \neg \psi)\big)$. Next, we derive by applying $\neg\!\cf\!\textsf{E}$ and $\cf\!\!\wedge\textsf{I}$, together with $\wedge\textsf{E}$ and $\wedge\textsf{I}$, that
\[\neg(\SET X =\SET x\cf \varphi)\wedge \neg(\SET X =\SET x\cf \psi)\vdash(\SET X =\SET x\cf \neg\varphi)\wedge (\SET X =\SET x\cf \neg\psi)\vdash\SET X =\SET x\cf (\neg\varphi \land \neg \psi).\]
Then, by applying $\neg\textsf{I}$ and $\neg\textsf{E}$, we derive the contrapositive of the above, namely  
\[\neg\big(\SET X =\SET x\cf (\neg\varphi \land \neg \psi)\big)\vdash\neg\big(\neg(\SET X =\SET x\cf \varphi)\wedge \neg(\SET X =\SET x\cf \psi)\big).\]
By De Morgan's laws again, the formula in the conclusion above is equivalent to $(\SET{X}=\SET{x}\cf \varphi)\vee(\SET{X}=\SET{x}\cf \psi)$. This finishes the proof.

For item (vii
), we first derive by $\cf\!\textsf{Eff}$ that $\vdash \SET{X}=\SET{x}\cf X=x$, where  $X=x$ is an arbitrary equality from $\SET{X}=\SET{x}$. By \textsf{ValDef} we also have that $\vdash\bigvee_{y\in \Ran(Y)}Y=y$. Thus, we conclude by applying $\cf\!\!\textsf{Rpl}_C$ that $\vdash\SET{X}=\SET{x}\cf \bigvee_{y\in \Ran(Y)}Y=y$, which then implies that $\vdash\bigvee_{y\in \Ran(Y)} (\SET{X}=\SET{x}\cf Y=y)$ by \corrb{item (vi).} 
\end{proof}

\begin{teo}[Completeness]\label{completeness_co}
For any set $\Delta\cup\{\alpha\}$ of $\CO$-formulas, we have that 
\(\Delta\models^{c/g}\alpha\iff\Delta\vdash\alpha.\)
\end{teo}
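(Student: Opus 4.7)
The plan is to split the argument into soundness and completeness and reduce the completeness claim to the already-established Hilbert-style completeness from \cite{BarSan2019}, using Corollary~\ref{gct_sem_cons_2_ct} to handle the two semantics uniformly.

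For soundness, I would proceed by a routine induction on the length of derivations, checking that each of the rules listed in Definition~\ref{co-system-df} preserves satisfaction in both $\models^c$ and $\models^g$. Most cases are standard team-semantic manipulations: the rules for $\land,\lor,\neg$ are as in the propositional team-logic literature, and $\cf\textsf{Eff}$, $\cf\textsf{I}$, $\textsf{ex falso}_{\cf}$, $\cf\!\bot\textsf{E}$, $\cf\!\!\land\textsf{I}$, $\neg\!\cf\!\textsf{E}$, $\cf\textsf{Extr}$ and $\cf\textsf{Exp}$ follow directly from the definition of intervention (Definition~\ref{intervention_ct_df}). The rule \textsf{Recur} is sound because we have restricted attention to recursive (generalized) causal teams, and its soundness is verified by unpacking the abbreviation $X \leadsto Y$ and observing that a cycle $X_1\leadsto X_2\leadsto\cdots\leadsto X_k\leadsto X_1$ in the formula would force a cycle in $G_\F$ for some $(s,\F)$ in the team.

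For completeness over causal teams, rather than giving a fresh canonical-model construction, I would exploit the fact that \cite{BarSan2019} already provides a Hilbert-style system $\vdash_H$ which is sound and complete for $\models^c$ on recursive causal teams. It then suffices to show that every axiom and every rule of $\vdash_H$ is derivable in our natural deduction system: Proposition~\ref{derivable_rules_co} already accounts for Definiteness, Uniqueness, Composition, Extraction, and the weak modus ponens / disjunctive distribution laws needed to simulate the propositional manipulations of $\vdash_H$, and the remaining Galles--Pearl style axioms (Effectiveness, Recursivity) appear directly in our calculus. Once every $\vdash_H$-derivation can be re-enacted step-by-step in our system, $\Delta\models^c\alpha$ gives $\Delta\vdash_H\alpha$ and hence $\Delta\vdash\alpha$.

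To lift this to generalized causal teams, I would invoke Corollary~\ref{gct_sem_cons_2_ct}, which states that for $\CO$-formulas the two entailment relations $\models^c$ and $\models^g$ coincide. Combining with the above yields the chain
\[
\Delta\models^g\alpha \;\iff\; \Delta\models^c\alpha \;\iff\; \Delta\vdash\alpha,
\]
closing both completeness claims simultaneously. The main obstacle I foresee is the bookkeeping required in step two: checking that each $\vdash_H$ axiom or rule, in particular those that mix $\cf$ with $\lor$ and with nested counterfactuals, admits a derivation in natural-deduction form. Proposition~\ref{derivable_rules_co}(vi) on distributivity of $\cf$ over $\lor$ and items (iii),(v) are the nontrivial ingredients here; any axiom of $\vdash_H$ not covered by Proposition~\ref{derivable_rules_co} will need a bespoke derivation built from $\cf\!\textsf{Rpl}_A$, $\cf\!\textsf{Rpl}_C$, $\cf\textsf{Extr}$ and $\cf\textsf{Exp}$, together with the propositional rules, but none of these appear to present fundamental difficulty.
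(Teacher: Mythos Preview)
Your proposal is correct and follows essentially the same approach as the paper: reduce completeness over causal teams to the Hilbert-style system of \cite{BarSan2019} by showing (via Proposition~\ref{derivable_rules_co}) that all its axioms and rules are derivable in the natural deduction calculus, and then invoke Corollary~\ref{gct_sem_cons_2_ct} to transfer the result to generalized causal teams. The paper's proof is terser but the strategy is identical.
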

\begin{proof}
Since our system derives all axioms and rules of the deduction system of \cite{BarSan2019}, the completeness of our system over causal teams follows from that of \cite{BarSan2019}.
The completeness of the system over generalized causal teams follows from the fact that $\Delta\models^{c}\alpha$ iff $\Delta\models^{g}\alpha$, given by Corollary \ref{gct_sem_cons_2_ct}.
\end{proof}


\section{Extensions of $\Co$} \label{SECCOMPLETE}

In this section, we study two extensions of \Co, namely $\COv$ and $\COd$. We first show in Section \ref{TEOCHARCDU} that the two logics are both expressively complete in the class of nonempty properties that are causally downward closed and closed under $\approx$-equivalence.  Then in Sections \ref{SUBSGENCOV} and \ref{sec:axiom_cod_g} we axiomatize the two logics over generalized causal team semantics. The axiomatizations for the logics over causal team semantics are given in Section \ref{AXCOCT}. 

\subsection{Expressive power of $\COv$ and $\COd$}

In this subsection, we characterize the expressive power of $\COv$ and $\COd$ over (generalized) causal teams. We show  that both logics characterize the  nonempty causally downward closed   team properties up to causal equivalence, and the two logics are thus expressively equivalent. 

\begin{teo}\label{TEOCHARCODGCT}\label{TEOCHARCDU}
Let  $\K$ be a nonempty class of (generalized) causal teams over some signature $\sigma$. Then the following are equivalent:
\begin{enumerate}
\item[(i)] $\K$ is  causally downward closed and closed under equivalence (or in other words, $\K$ is closed under $\succcurlyeq$). 
\item[(ii)] $\K$ is definable by a $\COV$-formula.
\item[(iii)] $\K$ is definable by a $\COD$-formula.
\end{enumerate}
\end{teo}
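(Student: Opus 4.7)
The plan is to establish (ii) $\Rightarrow$ (i), (iii) $\Rightarrow$ (i), (i) $\Rightarrow$ (ii), and (i) $\Rightarrow$ (iii), handling the causal team and generalized causal team cases together. The first two implications are immediate: every $\COV$- or $\COD$-formula is downward closed by Theorem \ref{TEOGENDW} and invariant under $\approx$ by Theorem \ref{PROPEQUIV}, so by Fact \ref{fct_preccurlyeq_closure} the class it defines is closed under $\succcurlyeq$.

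For (i) $\Rightarrow$ (ii), I would exploit the finiteness of $\sigma$: the set $\SEM$ is finite, hence so is the collection of all (generalized) causal teams over $\sigma$, and in particular $\K$ is finite. This makes the big global disjunction I shall write down well-defined. For each $T \in \K$, let $\psi_T$ be the $\CO$-formula that defines $\{S : S \preccurlyeq T\}$: in the generalized case take $\psi_T := \bigvee_{\F \in \FUN}(\Theta^{(T^\F)^-} \land \Phi^\F)$, given by Corollary \ref{COROCHARSUB}, and in the causal case take $\psi_T := \Theta^{T^-} \land \Phi^\F$ for $T = (T^-, \F)$, given by Lemma \ref{LEMCHARSUB}(ii). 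Then the $\COV$-formula
\[
\varphi_\K \;:=\; \bigvvee_{T \in \K} \psi_T
\]
defines $\K$: indeed $S \models \varphi_\K$ iff $S \preccurlyeq T$ for some $T \in \K$, which by closure of $\K$ under $\succcurlyeq$ (Fact \ref{fct_preccurlyeq_closure}) is equivalent to $S \in \K$.

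For (i) $\Rightarrow$ (iii) the principal obstacle is that $\COD$ has no global disjunction $\vvee$ and, as the paper notes, the definability of $\vvee$ in $\COD$ is an open problem, so the formula $\varphi_\K$ from (ii) cannot be translated connective-by-connective. I would instead construct a $\COD$-formula for $\K$ directly, exploiting the fact that dependence atoms embody a restricted form of global constraint through the equivalences (\ref{dep_atm_dfnable_vvee}) and (\ref{con_atm_dfnable_vvee}). A natural strategy is to enumerate the $\preccurlyeq$-maximal elements $T_1,\dots,T_m$ of $\K$, pick for each $T_i$ a $\COD$-formula $\hat{\psi}_{T_i}$ refining $\psi_{T_i}$ so that it still defines $\{S : S \preccurlyeq T_i\}$, and combine these via tensor disjunction together with auxiliary dependence-atom conjuncts engineered so that any admissible splitting $S = \bigcup_i S_i$ with $S_i \models \hat{\psi}_{T_i}$ is forced to have all but one $S_i$ empty. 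The technical heart of the proof will be the design of those auxiliary dependence atoms so that this collapse actually occurs within the fixed finite signature $\sigma$ and without appeal to fresh variables; this is the step I expect to require the most care.
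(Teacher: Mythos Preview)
Your implications (ii)$\Rightarrow$(i) and (iii)$\Rightarrow$(i) are fine, and your argument for (i)$\Rightarrow$(ii) is correct and pleasantly direct --- in fact the paper does not prove (i)$\Rightarrow$(ii) this way; it instead proves (i)$\Rightarrow$(iii) and then uses the definability of dependence atoms in $\COV$ to get (iii)$\Rightarrow$(ii).

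The genuine gap is your (i)$\Rightarrow$(iii). Your plan --- take the $\preccurlyeq$-maximal elements $T_1,\dots,T_m$ of $\K$, tensor-disjoin the $\psi_{T_i}$, and attach dependence atoms that force any admissible split to collapse to a single disjunct --- is not developed, and it is unclear how you would engineer such a collapse in general: the $T_i$ may have the same function component (so causal incompatibility in the sense of Lemma \ref{LEMORCOLLAPSE} is unavailable), and dependence atoms only constrain agreement patterns within a split piece, not across pieces. The paper avoids this difficulty entirely by dualizing. Rather than expressing ``$S\preccurlyeq T$ for some $T\in\K$'' (which wants a global disjunction), it expresses ``$T\not\preccurlyeq S$ for every $T\notin\K$'', which is a \emph{conjunction}
\[
\varphi \;=\; \bigwedge_{T\in\CT\setminus\K}\Xi^T,
\]
where $\CT$ is the (finite) set of all (generalized) causal teams over $\sigma$. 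The whole burden then shifts to building, for each nonempty $T$, a single $\COD$-formula $\Xi^T$ with $S\models\Xi^T\iff T\not\preccurlyeq S$ (Lemma \ref{LEMNOTSUB}). This is done with a tensor disjunction of three pieces: a cardinality-bounding formula $\chi_k$ (built from constancy atoms, so genuinely in $\COD$) asserting that $S$ has fewer than $|T/_{\approx}|$ inequivalent elements; a $\Theta$-formula pushing the rest of $S$ outside $T^-$; and a disjunction over the ``wrong function component'' possibilities via $\Theta^{\{s\}}\wedge\Phi^\F$. None of this needs $\vvee$. The complement-and-conjoin trick is the idea you are missing.
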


Notice that any such class $\K$ is finite. By Theorems \ref{TEOGENDW} and \ref{PROPEQUIV},  for every $\COV$- or $\COD$-formula $\varphi$, the set $\K_\varphi$ is nonempty, causally downward closed and closed under causal equivalence. Thus items (ii) and (iii) of the above theorem imply item (i). 
Since dependence atoms $\dep{\SET X}{Y}$ are definable in $\COV$ (see (\ref{dep_atm_dfnable_vvee}) and (\ref{con_atm_dfnable_vvee}) in Section \ref{SUBSCT}),  item (iii)  implies item (ii). It then suffices to show that item (i) implies item (iii). We give the proof for the causal team and generalized causal team case simultaneously.  In this proof, we make essential use of a formula $\Xi^T$ that defines the complement of the set of $\preccurlyeq$-successors of a (generalized) causal team $T$.  This formula resembles, in the causal setting, a similar one introduced in \cite{YanVaa2016}  in the pure team setting. 

The formula $\Xi^T$ is a disjunction of three parts. The first disjunct requires some words of comment. Its purpose is to characterize the size 
of a (generalized) causal team. 
The 
size 
of a causal team $T=(T^-,\F)$ can be (naturally) defined as the cardinality of its team component $T^-$. Generalized causal teams have a more complex structure, therefore the notion of size 
is more subtle. Consider for example the generalized causal team $T = \{(s,\F),(s,\G)\}$. If $\F$ and $\G$ are not similar (i.e., $\F\not\sim\G$), the two elements of $T$ should be counted as distinct; therefore, the cardinality of $T^-$ (i.e., $|T^-|=|\{s\}|=1$) is not a correct measure of the size of $T$. On the other hand, 
if $\F$ and $\G$ are similar (i.e., $\F\sim\G$, or $(s,\F)\approx(s,\G)$), the two elements $(s,\F)$ and $(s,\G)$ cannot be told apart using our languages; thus our languages do not allow us to distinguish the size of $T$ from that of one of its singleton subteams. We are then forced to say that $T$ has size $1$; therefore also $|T|=2$ is not a correct measure of the size of $T$. 
For these reasons, for generalized causal teams $T$, we shall be more concerned with the cardinality 
of the quotient team of $T$ over $\approx$ between pairs $(s,\F)\in T$, denoted as $T/_\approx$.

If $T$ is uniform, then clearly $|T/_{\approx}|=|T^-|$.  In particular, if $S=(S^-,\F)$ is a causal team, then for the generalized causal team $S^g=\{(s,\F)\mid s\in S^-\}$ it generates, we have that $|S^-|=|S^g/_\approx|$.

In the next lemma, we introduce a formula $\chi$ that defines the property of the quotient team $T/_\approx$ of having cardinality at most $1$.

\begin{lm}\label{phi_u_lm}
Let $T$ be a generalized causal team  over some vocabulary $\sigma$. 
\begin{enumerate}
    \item[(i)] If $|T^-|=1$, then   $T\models\mu\iff$ $T$ is uniform , where  
\[\mu:=\bigwedge_{V\in \Dom}\bigwedge_{\SET w\in\Ran(\SET W_V)} (\SET W_V = \SET w \cf \con V)~\text{ and }~\SET W_V = Dom\setminus \{V\}.\]
\item[(ii)] $T\models \chi \iff |T/_\approx|\leq 1$, where 
\[\chi:=\mu\land\bigwedge_{V\in \Dom}\con{V}.\]
\end{enumerate}
\end{lm}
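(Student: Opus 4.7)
The plan is to establish (i) first, then derive (ii) from it as a corollary. For (i), assume $|T^-|=1$, so every element of $T$ has the form $(s,\F)$ with the same underlying assignment $s$. The task is to show that $T\models\mu$ iff any two function components appearing in $T$ are $\sim$-related. Observe that the satisfaction clause for the counterfactual reduces $T\models \SET W_V = \SET w \cf \con V$ to: $s^\F_{\SET W_V = \SET w}(V) = s^\G_{\SET W_V = \SET w}(V)$ for all $(s,\F),(s,\G)\in T$.

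For the left-to-right direction of (i), pick $(s,\F),(s,\G)\in T$ and verify the two defining conditions of $\F\sim\G$ from Definition \ref{DEFSIMFG}. First, I would show $\End(\F)\setminus\Con(\F) = \End(\G)\setminus\Con(\G)$ by contradiction: if $V$ lies in the left side but not the right, then pick $\SET p \neq \SET p'\in \Ran(PA_V^\F)$ witnessing $\F_V(\SET p)\neq \F_V(\SET p')$ and extend to $\SET w,\SET w'\in \Ran(\SET W_V)$; the interventions yield distinct values under $\F$ but identical values under $\G$ (since $V$ is then exogenous or constant for $\G$), contradicting $\mu$. Second, for $V$ in the common set, given arbitrary $\SET x\in\Ran(PA_V^\F\cap PA_V^\G)$, $\SET y\in\Ran(PA_V^\F\setminus PA_V^\G)$, $\SET z\in\Ran(PA_V^\G\setminus PA_V^\F)$, pick any $\SET w\in\Ran(\SET W_V)$ agreeing with $\SET{xyz}$ on the relevant coordinates and read off $\F_V(\SET{xy}) = s^\F_{\SET W_V=\SET w}(V) = s^\G_{\SET W_V=\SET w}(V) = \G_V(\SET{xz})$.

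For the right-to-left direction of (i), assume $T$ is uniform, and fix $(s,\F),(s,\G)\in T$, $V\in\Dom$, $\SET w\in\Ran(\SET W_V)$. I would split into cases on the status of $V$ in $\F$ and $\G$: if $V$ is exogenous to both, both interventions give $s(V)$; if $V$ is in $\End(\F)\setminus\Con(\F) = \End(\G)\setminus\Con(\G)$, the equivalence $\F_V\sim\G_V$ delivers the equality of intervention outcomes; in the ``mixed'' cases such as $V\in\Exo(\F)$ and $V\in\Con(\G)$, compatibility of $s$ with both function components forces $s(V)$ to coincide with the constant value, yielding the same conclusion. The main obstacle here is this bookkeeping on dummy arguments and the clean handling of variables whose endogenous-or-exogenous status differs between $\F$ and $\G$.

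For (ii), the conjunct $\bigwedge_{V\in\Dom}\con{V}$ is satisfied precisely when $|T^-|\leq 1$. If $T=\emptyset$ both sides hold by the empty team property; otherwise $|T^-|=1$ and by (i), $T\models\mu$ is equivalent to uniformity, which in turn is equivalent to all elements $(s,\F)\in T$ lying in a single $\approx$-class, i.e.\ $|T/_\approx|\leq 1$. This gives the biconditional.
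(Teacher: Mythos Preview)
Your proposal is correct, and for the left-to-right direction of (i) and for (ii) it matches the paper's argument essentially step for step (the paper phrases the whole of (i)'s forward direction as a contrapositive, but the content is the same as your two-part verification).

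The one genuine difference is in the right-to-left direction of (i). You propose a direct case analysis on the status of $V$ in $\F$ versus $\G$ (exogenous/constant/properly endogenous), using compatibility of $s$ with both function components to handle the mixed cases. This works, and you correctly identify the bookkeeping on dummy arguments as the main cost. The paper instead sidesteps all of this with a one-line appeal to invariance under causal equivalence (Theorem~\ref{PROPEQUIV}): if $T$ is uniform with $T^-=\{s\}$, then $T\approx\{(s,\F_1)\}$ for any $(s,\F_1)\in T$; a singleton trivially satisfies every conjunct $\SET W_V=\SET w\cf\con V$, so $\{(s,\F_1)\}\models\mu$, and invariance transfers this to $T$. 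Your route is more elementary and self-contained; the paper's is shorter but relies on a nontrivial earlier result.
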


\begin{proof}
(i) Let $T=\{(s,\F_1),\dots,(s,\F_n)\}$.
If $\F_1\sim\dots\sim\F_n$, then $T\approx\{(s,\F_1)\}$. Clearly, $\{(s,\F_1)\}\models\mu$, thus we obtain $T\models\mu$ by Theorem \ref{PROPEQUIV}.

For the converse direction, suppose $T$ is not uniform, i.e., $\F_i\not\sim\F_j$ for some $1\leq i<j\leq n$. If (w.l.o.g.) $\End(\F_i)\setminus \Con(\F_i)\nsubseteq \End(\F_j)\setminus \Con(\F_j)$, then there exists $V$ such that $V\in \End(\F_i)\setminus \Con(\F_i)$ and $V\notin \End(\F_j)\setminus \Con(\F_j)$. The former implies that 
there are $\SET p,\SET p'\in \Ran(PA_V^{\F_i})$ such that $(\F_i)_V(\SET p) \neq (\F_i)_V(\SET p')$. Let $\SET w,\SET w'\in \Ran(\SET W_V)$ be extensions of the sequences $\SET p,\SET p'$ respectively. Clearly, $s^{\F_i}_{\SET W_V = \SET w} \neq s^{\F_i}_{\SET W_V = \SET w'}$. Meanwhile, the second assumption implies  that $s^{\mathcal F_j}_{\SET W_V = \SET w} = s^{\mathcal F_j}_{\SET W_V = \SET w'}$. Therefore, either $s^{\mathcal F_j}_{\SET W_V = \SET w}(V)= s^{\mathcal F_j}_{\SET W_V = \SET w'}(V)\neq s^{\F_i}_{\SET W_V = \SET w}(V)$ or $s^{\mathcal F_j}_{\SET W_V = \SET w}(V)= s^{\mathcal F_j}_{\SET W_V = \SET w'}(V)\neq s^{\F_i}_{\SET W_V = \SET w'}(V)$.  But then $T\not\models W_V = \SET w\cf \con V$, which implies $T\not\models\mu$.

In the second case we have that, for some $V\in\End(\F_i)\setminus \Con(\F_i)=\End(\mathcal F_j)\setminus \Con(\mathcal F_j)$, there are  $\SET x\in \Ran(PA_V^{\F_i}\cap PA_V^{\mathcal F_j})$, $\SET y\in \Ran(PA_V^{\F_i}\setminus PA_V^{\mathcal F_j})$ and  $\SET z \in \Ran(PA_V^{\mathcal F_j}\setminus PA_V^{\mathcal F_i})$ such that  $(\F_i)_V(\SET{xy})\neq (\mathcal F_j)_V(\SET{xz})$. Then, for any extension $\SET w \in \Ran(\SET W_V)$  of the sequence $ \SET{xyz}$, we have $s^{\F_i}_{\SET W_V = \SET w}(V) \neq s^{\mathcal F_j}_{\SET W_V = \SET w}(V)$, which again implies $T\not\models\mu$.

(ii) It is easy to see that  $T\models\bigwedge_{V\in \Dom}\con{V}$ iff $|T^-|\leq 1$. If $T=\emptyset$, then $\emptyset\models\chi$. If $T\neq\emptyset$, then by item (i), we have that 
\[T\models\chi \iff |T^-|= 1\text{ and $T$ is uniform} \iff |T/_\approx|=|T^-|=1.\] 
\end{proof}

 A causal team $T$ can be identified with a (uniform) generalized causal team $T^g$, and, as pointed out already, $|T^-|=|T^g/_\approx|$. The above lemma thus gives a characterization of ``having size/cardinality 
 at most $1$" for causal teams as well. Furthermore, since $T^g$ is always uniform, the formula $\mu$  that defines uniformity in item (i) above becomes trivial  over singleton causal teams. \corrb{Therefore:}

\begin{coro}
\corrb{For causal teams the formula $\chi$ defined above can be simplified as follows:}
\[
 \displaystyle\chi\equiv^c\bigwedge_{V\in \Dom}\con{V}.
\]
\end{coro}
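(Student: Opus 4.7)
The plan is to prove the two directions of the equivalence separately. The direction $\chi \models^c \bigwedge_{V\in \Dom}\con V$ is immediate from the definition of $\chi$ via conjunction elimination, so no work is needed there.

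For the converse $\bigwedge_{V\in \Dom}\con V \models^c \chi$, I would start with a causal team $T = (T^-, \F)$ satisfying $\bigwedge_{V\in \Dom}\con V$. Since each constancy atom $\con V$ forces all assignments in $T^-$ to agree on $V$, and this holds for every variable in the (finite) domain $\Dom$, I conclude that $|T^-| \leq 1$. The remaining task is to show that $T$ also satisfies the uniformity clause $\mu$.

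If $T^- = \emptyset$ this follows by the empty team property (Theorem \ref{TEOGENDW}). In the singleton case $T^- = \{s\}$, I would appeal to Lemma \ref{phi_u_lm}(i) applied to the associated generalized causal team $T^g = \{(s,\F)\}$: since $T^g$ has a single function component, it is trivially uniform, so $T^g \models^g \mu$, and by Lemma \ref{LEMIDENTIFY}(i) this transfers back to $T \models^c \mu$. Alternatively, one can verify this directly: for any intervention $do(\SET W_V = \SET w)$ on a singleton causal team, the resulting causal team $T_{\SET W_V = \SET w}$ again has a singleton team component (because a fixed $\F$ is applied to a single assignment), and any singleton team satisfies $\con V$.

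There is essentially no obstacle here; the corollary is really an observation about what the definition of a causal team already builds in. The conceptual point is that uniformity is guaranteed by having a single shared function component $\F$, so the $\mu$-clause of $\chi$ becomes redundant once $|T^-| \leq 1$ has been forced by the constancy conjuncts. This is precisely the feature that distinguishes the causal team setting from the generalized one, where $\mu$ continues to carry non-trivial content.
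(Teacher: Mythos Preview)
Your proof is correct. The paper's own argument is a compact biconditional chain: it transfers to $T^g$ via Lemma~\ref{LEMIDENTIFY}(i), invokes Lemma~\ref{phi_u_lm}(ii) to get $T^g\models^g\chi \iff |T^g/_{\approx}|\leq 1$, then uses uniformity of $T^g$ to identify $|T^g/_{\approx}|$ with $|T^-|$, which in turn is characterized by $\bigwedge_{V\in\Dom}\con V$. You instead split into two directions and, for the nontrivial one, invoke Lemma~\ref{phi_u_lm}(i) (the characterization of $\mu$ on singleton-support teams) rather than~(ii). Both routes rest on the same underlying observation---that $T^g$ is automatically uniform---so the difference is organizational: the paper's chain is terser and leans on the quotient characterization already established, while your argument is slightly more hands-on and makes the redundancy of $\mu$ explicit without passing through $T^g/_{\approx}$.
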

\begin{proof}
For any causal team $S$, we have that 
\begin{align*}
    S\models^c\chi&\iff S^g\models^g\chi\tag{by Lemma \ref{LEMIDENTIFY}(i)}\\
    &\iff|S^g/_\approx|\leq 1\tag{by Lemma \ref{phi_u_lm}}
    \\
    &\iff |(S^g)^-|\leq 1
    \iff |S^-|\leq 1
    \iff S\models^c\bigwedge_{V\in \Dom}\con{V}.
\end{align*}
\end{proof}

We now introduce \COD-formulas that characterize the cardinality of $T/_{\approx}$. For every natural number $k$, define a formula $\chi_k$ as
\[\chi_0=\bot~~\text{ and }~~\chi_{k}:=\underbrace{\chi\lor\cdots\lor \chi}_{k}\text{ for }k\geq1\]


\begin{prp}\label{LEM_chi_prop}Let $\sigma$ be a vocabulary and $k$ a natural number.
\begin{enumerate}
    \item[(i)] For any generalized causal team $T$ over $\sigma$,
$T\models^g\chi_k \iff |T/_\approx|\leq k$.
\item[(ii)] For any causal team $T$ over $\sigma$,
$T\models^c\chi_k \iff |T^-|\leq k$.
\end{enumerate}
\end{prp}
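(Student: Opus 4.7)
My plan is to prove both items by (a simple) induction on $k$, using Lemma~\ref{phi_u_lm}(ii) as the base case and exploiting the fact that in generalized causal team semantics the tensor disjunction $\vee$ behaves like a covering operation over arbitrary subsets.

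\smallskip

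\noindent\textbf{Item (i).} Proceed by induction on $k$. For $k=0$, note that $\chi_0=\bot$, so $T\models^g\chi_0$ iff $T=\emptyset$ iff $|T/_\approx|=0\leq 0$. For the inductive step, observe that, modulo associativity of $\vee$ (which holds semantically), $\chi_{k+1}\equiv^g \chi_k\vee \chi$.
For the direction $(\Rightarrow)$: if $T\models^g \chi_k\vee \chi$ then there are $T_1,T_2\subseteq T$ with $T=T_1\cup T_2$, $T_1\models^g \chi_k$ and $T_2\models^g \chi$. By the induction hypothesis and Lemma~\ref{phi_u_lm}(ii), $|T_1/_\approx|\leq k$ and $|T_2/_\approx|\leq 1$, so $|T/_\approx|\leq |T_1/_\approx|+|T_2/_\approx|\leq k+1$.
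For the direction $(\Leftarrow)$: if $T=\emptyset$, the empty team property gives $T\models\chi_{k+1}$. Otherwise, let $m=|T/_\approx|\leq k+1$ and pick one $\approx$-equivalence class $[C]$ of $T$. Define $T_2:=\{(s,\F)\in T\mid (s,\F)\in [C]\}$ and $T_1:=T\setminus T_2$. Then $T=T_1\cup T_2$, $|T_2/_\approx|= 1$, and $|T_1/_\approx|=m-1\leq k$. By Lemma~\ref{phi_u_lm}(ii), $T_2\models^g\chi$; by the induction hypothesis, $T_1\models^g\chi_k$. Hence $T\models^g\chi_{k+1}$.

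\smallskip

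\noindent\textbf{Item (ii).} This reduces to item (i) via the correspondence between a causal team $T=(T^-,\F)$ and its associated uniform generalized causal team $T^g=\{(s,\F)\mid s\in T^-\}$. Indeed, by Lemma~\ref{LEMIDENTIFY}(i), $T\models^c\chi_k$ iff $T^g\models^g\chi_k$, and by item (i), this is equivalent to $|T^g/_\approx|\leq k$. Since $T^g$ is uniform (all elements share the same function component $\F$), distinct assignments $s,s'\in T^-$ give rise to distinct $\approx$-classes in $T^g$, so $|T^g/_\approx|=|(T^g)^-|=|T^-|$, which gives the desired equivalence.

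\smallskip

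I do not expect any real obstacle: the only point requiring a small amount of care is the use of (semantic) associativity of $\vee$ to treat $\chi_{k+1}$ as $\chi_k\vee \chi$, but this follows directly from the team-semantic clause for $\vee$ on generalized causal teams, which permits arbitrary coverings by subteams. The base case was already done in Lemma~\ref{phi_u_lm}, and the reduction in item (ii) is immediate once one observes that the quotient by $\approx$ on a uniform team coincides in cardinality with the team component.
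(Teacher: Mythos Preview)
Your proof is correct and follows essentially the same approach as the paper. The only difference is cosmetic: for item (i) the paper unfolds the $k$-fold disjunction $\chi_k$ all at once (splitting $T$ directly into $k$ pieces each satisfying $\chi$) rather than doing an induction on $k$, and item (ii) is handled identically via Lemma~\ref{LEMIDENTIFY}(i) and the observation $|T^g/_\approx|=|T^-|$ for uniform $T^g$.
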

\begin{proof}
(i) Let $T$ be a generalized causal team. Clearly, $T\models \chi_0$ iff $T=\emptyset$ iff $|T/_{\approx}|=0$. For any $k\geq 1$, $T\models\chi_k$ iff there are $T_1,\dots,T_k$ with  $T=T_1\cup\dots\cup T_k$ and each $T_i\models\chi$ (i.e., by Lemma \ref{phi_u_lm},  $|T_i/_\approx|\leq 1$), iff there are pairwise non-$\approx$-equivalent teams $T_{i_1},\dots,T_{i_m}$ with $m\leq k$, $T=T_{i_1}\cup\dots\cup T_{i_m}$ and each $|T_{i_j}/_\approx|\leq 1$, iff $|T/_\approx|=m\leq k$ for some $m$.

(ii) For any causal team $T$,  by item (i) we have that
\begin{align*}
    T\models^c\chi_k&\iff T^g\models^g\chi_k\tag{by Lemma \ref{LEMIDENTIFY}(i)}\\
    &\iff |T^-|=|(T^g)^-|=|T^g/_\approx|\leq k\tag{since $T^g$ is uniform}
\end{align*}
\end{proof}

Now, we are ready to define the crucial formula $\Xi^T$ in \COD.
Let $T$ be a nonempty generalized causal team with $|T/_\approx|= k+1$. Define 
\[\Xi^T:=\chi_k\vee\Theta^{\overline{T^-}}\vee\mathop{\bigvee_{s\in T^-\hspace{-2pt},\hspace{2pt}\F\in\FUN}}_{\{(s,\F)\}\not\preccurlyeq T}(\Theta^{\{s\}}\wedge\Phi^\F),\]
 where $\overline{T^-}=\ASS\setminus T^-$. 
If $T$ 
is a causal team, define $\Xi^T:=\Xi^{T^g}$. 
We show in the next lemma 
the desired characterization property of the formula $\Xi^T$. Recall from Lemma \ref{preccurlyeq_prop}(i) 
that $T\not\preccurlyeq S$ iff there exists $(t,\F)\in T$ such that $\{(t,\F)\}\not\preccurlyeq S$. 


\begin{lm}\label{LEMNOTSUB}
Let $S,T$ be two (generalized) causal teams over some signature $\sigma$ with $T\neq\emptyset$. 
Then, $S\models \Xi^T \iff T\not\preccurlyeq S$.
\end{lm}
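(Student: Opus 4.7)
The plan is to handle the generalized causal team case first, then deduce the causal team case from it using the identification $T \rightsquigarrow T^g$ together with Lemmas \ref{LEMIDENTIFY}(i) and \ref{preccurlyeq_prop}(iv); indeed $|T^g/_\approx|=|T^-|=k+1$ when $T$ is a causal team of size $k+1$, so $\Xi^T=\Xi^{T^g}$ is well defined, and the reduction is immediate.

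For the forward direction over generalized causal teams, I would argue by contraposition: assume $T\preccurlyeq S$, so there exists $R\approx T$ with $R\subseteq S$; since $R\approx T$ one has $R^-=T^-$ and $|R/_\approx|=|T/_\approx|=k+1$. From $S\models\Xi^T$ the semantic clause for $\vee$ yields a decomposition $S=S_1\cup S_2\cup S_3$ with $S_1\models\chi_k$, $S_2\models\Theta^{\overline{T^-}}$, and $S_3\models\bigvee(\Theta^{\{s\}}\wedge\Phi^\F)$ where the disjunction ranges over pairs with $\{(s,\F)\}\not\preccurlyeq T$. Applying Lemma \ref{preccurlyeq_prop}(vi) to $R\preccurlyeq S_1\cup S_2\cup S_3$ gives $R=R_1\cup R_2\cup R_3$ with $R_i\preccurlyeq S_i$. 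Then: $R_1\models\chi_k$ by Theorem \ref{PROPEQUIV} and downward closure, so $|R_1/_\approx|\leq k$ by Proposition \ref{LEM_chi_prop}(i); $R_2^-\subseteq\overline{T^-}$ by Lemma \ref{LEMCHARSUB_ct}, yet $R_2^-\subseteq R^-=T^-$ forces $R_2=\emptyset$; for $R_3$, each $(s,\F)\in R$ satisfies $\{(s,\F)\}\preccurlyeq T$ (since $(R^\F)^-=(T^\F)^-$ implies there is $(s,\G)\in T$ with $\G\sim\F$), whereas any $(s,\F)\in R_3$ must by Lemma \ref{preccurlyeq_prop}(i,vi) be $\approx$-equivalent to one of the witnesses $(s_i,\F_i)$ of the third disjunct, for which $\{(s_i,\F_i)\}\not\preccurlyeq T$ --- contradiction, so $R_3=\emptyset$. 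Hence $|R/_\approx|=|R_1/_\approx|\leq k$, contradicting $|R/_\approx|=k+1$.

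For the backward direction, assume $T\not\preccurlyeq S$. By Lemma \ref{preccurlyeq_prop}(i) pick a witness $(t,\G)\in T$ with $\{(t,\G)\}\not\preccurlyeq S$. Partition
\[
S_1:=\{(s,\F)\in S\mid s\in T^-,\ \{(s,\F)\}\preccurlyeq T\},\quad
S_2:=\{(s,\F)\in S\mid s\notin T^-\},
\]
\[
S_3:=\{(s,\F)\in S\mid s\in T^-,\ \{(s,\F)\}\not\preccurlyeq T\},
\]
so that $S=S_1\cup S_2\cup S_3$. Then $S_2\models\Theta^{\overline{T^-}}$ by Lemma \ref{LEMCHARSUB_ct}, and $S_3\models\bigvee_{(s,\F)\in S_3}(\Theta^{\{s\}}\wedge\Phi^{\F})$ by Theorem \ref{LEMPHIF} together with Lemma \ref{LEMCHARSUB_ct} applied elementwise; since each summand index $(s,\F)$ of $S_3$ satisfies the side conditions $s\in T^-$ and $\{(s,\F)\}\not\preccurlyeq T$, this entails the bigger disjunction appearing in $\Xi^T$. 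The crucial point is the bound on $S_1$: the map $[(s,\F)]_\approx\mapsto[(s,\G')]_\approx$, where $(s,\G')$ is any element of $T$ witnessing $\{(s,\F)\}\preccurlyeq T$, is a well-defined injection $S_1/_\approx\hookrightarrow T/_\approx$, and its image omits $[(t,\G)]_\approx$ (since otherwise some $(s,\F)\in S_1\subseteq S$ would be $\approx$-equivalent to $(t,\G)$, contradicting $\{(t,\G)\}\not\preccurlyeq S$). Hence $|S_1/_\approx|\leq k$, so $S_1\models\chi_k$ by Proposition \ref{LEM_chi_prop}(i), and we conclude $S\models\Xi^T$.

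The main obstacle is this counting step in the backward direction: one needs to exploit the Lemma \ref{preccurlyeq_prop}(i) characterization of $\not\preccurlyeq$ to extract a single ``missing'' $\approx$-class of $T$, and then carefully check that the natural map from $\approx$-classes of $S_1$ to those of $T$ is injective and misses this class, in order to beat the threshold $k$ rather than merely $k+1$. Everything else is essentially bookkeeping combining Lemma \ref{LEMCHARSUB}, Theorem \ref{LEMPHIF}, and the transfer properties of $\preccurlyeq$ collected in Lemma \ref{preccurlyeq_prop}.
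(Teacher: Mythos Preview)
Your proof is correct and follows essentially the same route as the paper: the same three-part decomposition of $S$, the same use of Proposition~\ref{LEM_chi_prop}, Lemma~\ref{LEMCHARSUB_ct} and Theorem~\ref{LEMPHIF} for the individual pieces, and the same reduction of the causal-team case to the generalized one via $\Xi^T=\Xi^{T^g}$ and Lemmas~\ref{LEMIDENTIFY}(i), \ref{preccurlyeq_prop}(iv). The only cosmetic difference is in the forward direction: the paper argues directly by picking $(t,\G)\in T$ with $\{(t,\G)\}\not\preccurlyeq S_0$ and showing it also avoids $S_1,S_2$, whereas you argue the contrapositive by pushing $R\approx T$ through the decomposition via Lemma~\ref{preccurlyeq_prop}(vi) and reaching a cardinality contradiction---these are dual versions of the same idea.
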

\begin{proof}
We first show the generalized causal team case. ``$\Longrightarrow$": Suppose $S\models\Xi^T$. Then there exist $S_0,S_1,S_2\subseteq S$ such that $S=S_0\cup S_1\cup S_2$, $S_0\models\chi_k$, $S_1\models\Theta^{\overline{T^-}}$ and
\begin{equation}\label{LEMNOTSUB_eq}
    S_2\models \mathop{\bigvee_{s\in T^-\hspace{-2pt},\hspace{2pt} \F\in\FUN}}_{\{(s,\F)\}\not\preccurlyeq T}(\Theta^{\{s\}}\wedge\Phi^\F).
\end{equation}
The first clause implies that
$|S_0/_\approx|\leq k$, by Proposition \ref{LEM_chi_prop}. Since $|T/_\approx|=k+1$, there must exist a $(t,\G)\in T$ such that $\{(t,\G)\}\not\preccurlyeq S_0$. Moreover, since  $S_1\models\Theta^{\overline{T^-}}$, we have by Lemma \ref{LEMCHARSUB_ct} that $ S_1^-\subseteq \overline{T^-}$. It then follows that $t\notin S_1^-$, giving that $\{(t,\G)\}\not\preccurlyeq S_1$ as well. Finally, we show that $\{(t,\G)\}\not\preccurlyeq S_2$, which would then imply $\{(t,\G)\}\not\preccurlyeq S_0\cup S_1\cup S_2=S$, thereby $T\not\preccurlyeq S$.

Suppose towards a contradiction that $\{(t,\G)\}\preccurlyeq S_2$. Then $(t,\mathcal{H})\in S_2$ for some $\mathcal{H}\sim\G$. Now, since (\ref{LEMNOTSUB_eq}) holds, we have that $\{(t,\mathcal{H})\}\models \Theta^{\{s\}}\wedge\Phi^{\F}$ for some $s\in T^-$ and $\F\in \FUN$ with $\{(s,\F)\}\not\preccurlyeq T$. This means, by Lemma \ref{LEMCHARSUB_ct} and Theorem \ref{LEMPHIF},  that $t=s\in T^-$ and $\F\sim\mathcal{H}\sim \G$. But since $(t,\G)\in T$, we must have that $\{(s,\F)\}\preccurlyeq T$, which is a contradiction. Hence, we conclude that $\{(t,\G)\}\not\preccurlyeq S_2$.


``$\Longleftarrow$": Suppose $T\not\preccurlyeq S$. Let 
\[S_0=\{(s,\F)\in S\mid \{(s,\F)\}\preccurlyeq T\}.\]
Then obviously $|S_0/_\approx| \leq |T/_\approx|$. Since $T\not\preccurlyeq S$, we must have that $T\not\preccurlyeq S_0\subseteq S$. Thus, there exists $(s,\G)\in T$ such that $\{(s,\G)\}\not\preccurlyeq S_0$. 
But then $|S_0/_\approx|<|T/_\approx|=k+1$, i.e., $|S_0/_\approx|\leq k$. By Proposition \ref{LEM_chi_prop}, we conclude that $S_0\models\chi_k$.

Let 
\[S_1=\{(s,\F)\in S\mid s\notin T^-\}.\]
Since $S_1^-\subseteq \overline{T^-}$,  by Lemma \ref{LEMCHARSUB_ct} we obtain $S_1\models\Theta^{\overline{T^-}}$. 

Let $S_2=S\setminus (S_0\cup S_1)$. Observe that
\[S_2=\{(s,\F)\in S\mid s\in T^-\text{ and }\{(s,\F)\}\not\preccurlyeq T\}.\]
For each $(s,\F)\in S_2$, we have that $\{(s,\F)\}\models \Theta^{\{s\}}\wedge\Phi^\F$ by Lemma \ref{LEMCHARSUB_ct} and Theorem \ref{LEMPHIF}. 
Thus (using also the empty team property) \eqref{LEMNOTSUB_eq} holds. Altogether, we conclude that $S\models\varphi$.

\vspace{4pt}

Next, observe that the causal team case follows from the generalized causal team case, since for any causal teams $S,T$, we have that $\Xi^T=\Xi^{T^g}$ and
\[S\models^c\Xi^T\iff S^g\models^g\Xi^{T^g}\iff T^g\not\preccurlyeq S^g\iff T\not\preccurlyeq S\]
by Lemma \ref{LEMIDENTIFY}(i) and Lemma \ref{preccurlyeq_prop}(iv).
\end{proof}

Finally, we give the proof of our main theorem of this section.

\vspace{5pt}

\begin{lateproof}{Theorem \ref{TEOCHARCODGCT}}
We prove that item (i) implies item (iii). Let $\K$ be a nonempty class of (generalized) causal teams as described in item (i). Since $\K$ is nonempty and causally downward closed, the 
empty team belongs 
to $\K$. Thus, every (generalized) causal team $T\in \CT\setminus\K$ is nonempty, where $\CT$ denotes the (finite) set of all (generalized) causal teams over  $\sigma$. 
Now, define 
\(\displaystyle\varphi=\bigwedge_{T\in \CT \setminus \K}\Xi^T.\)
We show that $\K=\K_\varphi$. 

For any $S\notin \K$, i.e., $S\in \CT \setminus \K$, since $S\preccurlyeq S$ and $S^-\neq\emptyset$, by Lemma \ref{LEMNOTSUB} we have that $S\not\models \Xi^S$. Thus $S\not\models\varphi$, i.e., $S\notin \K_\varphi$.
Conversely, suppose $S\in \K$. Take any $T\in \CT \setminus \K$. If $T\preccurlyeq S$, then since $\K$ is closed under $\succcurlyeq$ 
we must conclude that $T\in \K$;  a contradiction. Thus, by Lemma \ref{LEMNOTSUB}, $S\models \Xi^T$. Hence $S\models\varphi$, i.e., $S\in \K_\varphi$.
\end{lateproof}


\subsection{Axiomatizing $\COv$  over generalized causal teams} \label{SUBSGENCOV}

In this section, 
we introduce a sound and complete system of natural deduction for $\COV$, which extends  the system  for $\CO$, and can also be seen as
a variant of the systems for propositional dependence logics introduced in \cite{YanVaa2016}.

\begin{df}\label{COV_system_df}
The system of natural deduction for $\COV$ 
over generalized causal teams 
consists of all rules of the system for $\CO$ (see Definition \ref{co-system-df}) together with the following rules, where note that in the rules $\vee\textsf{E}$, $\neg\textsf{I}$, $\neg\textsf{E}$, $\textsf{RAA}$ and $\neg\! \ \!\!\cf\!\!\textsf{E}$ from Definition \ref{co-system-df} the formula $\alpha$ ranges over $\CO$-formulas only:
\begin{itemize}
\item[-] Additional rules for $\vee$:\\
{\normalfont
\renewcommand{\arraystretch}{1.8}
\begin{tabular}{|C{0.45\linewidth}C{0.45\linewidth}|}
\hline
 \multirow{2}{*}{\AxiomC{}\noLine\UnaryInfC{$\varphi\vee\psi$} \RightLabel{$\vee\textsf{Com}$}\UnaryInfC{$\psi\vee\varphi$}\DisplayProof
 \quad
 \AxiomC{}\noLine\UnaryInfC{$(\varphi\vee\psi)\vee\chi$} \RightLabel{$\vee\textsf{Ass}$}\UnaryInfC{$\varphi\vee(\psi\vee\chi)$}\DisplayProof}
 
 &
 
 \quad \AxiomC{$\varphi\vee\psi$}\AxiomC{}\noLine\UnaryInfC{[$\varphi$]}\noLine\UnaryInfC{$\vdots$}\noLine\UnaryInfC{$\chi$} \RightLabel{$\vee\textsf{Rpl}$} \BinaryInfC{$\chi\vee\psi$}\noLine\UnaryInfC{}\DisplayProof\\
   \multicolumn{2}{|c|}{
 \AxiomC{$\SET{X}=\SET{x}\cf \varphi\vee \psi$}\RightLabel{\footnotesize$\cf\!\!\vee\textsf{Dst}$}\doubleLine\UnaryInfC{$(\SET{X}=\SET{x}\cf \varphi)\vee(\SET{X}=\SET{x}\cf \psi)$}\noLine\UnaryInfC{} \DisplayProof 
 }\\
 \hline
 \end{tabular}
 %
}


\item[-] Rules for $\vvee$:\\
{\normalfont
\setlength{\tabcolsep}{4pt}
\renewcommand{\arraystretch}{1.8}
\setlength{\extrarowheight}{1pt}
\begin{tabular}{|C{0.45\linewidth}C{0.48\linewidth}|}
\hline
 \multirow{2}{*}{\AxiomC{$\varphi$} \RightLabel{$\vvee\!\textsf{I}$}\UnaryInfC{$\varphi\vvee\psi$}\DisplayProof
 ~\AxiomC{$\varphi$} \RightLabel{$\vvee\!\textsf{I}$}\UnaryInfC{$\psi\vvee\varphi$}\DisplayProof}
 
 &
 
 \AxiomC{$\varphi\vvee\psi$}\AxiomC{}\noLine\UnaryInfC{[$\varphi$]}\noLine\UnaryInfC{$\vdots$}\noLine\UnaryInfC{$\chi$} \AxiomC{}\noLine\UnaryInfC{[$\psi$]}\noLine\UnaryInfC{$\vdots$}\noLine\UnaryInfC{$\chi$}\RightLabel{$\vvee\!\textsf{E}$} \TrinaryInfC{$\chi$}\DisplayProof\\
 
 & \\
 
  \multicolumn{2}{|c|}{\AxiomC{$\varphi\vee(\psi\vvee\chi)$}\RightLabel{$\vee\!\vvee\!\textsf{Dst}$}\UnaryInfC{$(\varphi\vee\psi)\vvee(\varphi\vee\chi)$} \noLine\UnaryInfC{}\DisplayProof
 \AxiomC{$\SET X = \SET x \cf \psi \vvee \chi$}\RightLabel{$\cf\!\!\!\vvee$\!\textsf{Dst}}\UnaryInfC{$(\SET X = \SET x \cf \psi) \vvee (\SET X = \SET x \cf \chi)$} \noLine\UnaryInfC{}\DisplayProof}\\\hline
\end{tabular}
}
\end{itemize}

\vspace{3pt}

\noindent The double line in rule $\cf\!\!\vee\textsf{Dst}$ indicates that we are also including the inverse of the rule in the calculi.
\end{df}


\noindent The rules in our system are clearly sound. Since our system for \COV~ is an extension of that for \CO, the clauses in Proposition \ref{derivable_rules_co} are still derivable (assuming that the formula $\alpha$ that appears in some of the clauses ranges over \CO-formulas only) by the same derivations, except for item (vi) whose proof uses the rule $\vee\mathsf{E}$ and the rules for negation which are not sound for arbitrary \COV-formulas. 
This item is thus now directly included in the system for \COV as rule $\cf\!\!\vee\textsf{Dst}$. 
We now derive some additional useful clauses.


\begin{prp}\label{derivable_rules_covvee}
The following are derivable in the system of  $\COV$:
\begin{enumerate}
\item[(i)]\label{derivable_rules_covvee_com_ass}  $\varphi\vvee\psi\vdash \psi\vvee\varphi$ and $(\varphi\vvee\psi)\vvee\chi\vdash\varphi\vvee(\psi\vvee\chi)$
\item[(ii)]\label{derivable_rules_covvee_con_distr}  $\varphi\wedge(\psi\vvee\chi)\dashv\vdash (\varphi\wedge\psi)\vvee(\varphi\wedge\chi)$
\item[(iii)]\label{derivable_rules_cod_dstr}  $\varphi\wedge(\psi\vee\chi)\vdash(\varphi\wedge\psi)\vee(\varphi\wedge\chi)$ 
\item[(iv)]\label{derivable_rules_covvee_vvee_distr} $\varphi\vee(\psi\vvee\chi)\dashv\vdash(\varphi\vee\psi)\vvee(\varphi\vee\chi)$
\item[(v)]\label{derivable_rules_covvee_vvee_cf} $\SET X = \SET x \cf \psi\vvee\chi\dashv\vdash (\SET X = \SET x \cf \psi) \vvee (\SET X = \SET x \cf \chi)$
\end{enumerate}
\end{prp}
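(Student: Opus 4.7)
The plan is to derive each clause by routine applications of the system's introduction/elimination rules for $\vvee$ together with the disjunction-manipulation rules $\vee\textsf{Com}$, $\vee\textsf{Rpl}$, the distributivity rule $\vee\!\vvee\textsf{Dst}$, and the counterfactual-distribution rule $\cf\!\!\vvee\textsf{Dst}$. For (i), commutativity $\varphi\vvee\psi\vdash\psi\vvee\varphi$ follows by $\vvee\textsf{E}$ applied to the assumption, with each of the two subderivations concluding $\psi\vvee\varphi$ by a single $\vvee\textsf{I}$ step; associativity is obtained by the same strategy but with a nested $\vvee\textsf{E}$ inside the left-$\vvee\textsf{E}$ branch. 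For (ii), the forward implication is obtained by first extracting $\varphi$ and $\psi\vvee\chi$ from $\varphi\wedge(\psi\vvee\chi)$ via $\wedge\textsf{E}$, then applying $\vvee\textsf{E}$ to $\psi\vvee\chi$ and, in each branch, forming $\varphi\wedge\psi$ (resp.\ $\varphi\wedge\chi$) by $\wedge\textsf{I}$ and applying $\vvee\textsf{I}$; the converse is symmetric, using $\vvee\textsf{E}$ on the premise and, in each branch, $\wedge\textsf{E}$, $\vvee\textsf{I}$, and $\wedge\textsf{I}$ to reassemble $\varphi\wedge(\psi\vvee\chi)$. For (v), the left-to-right direction is literally $\cf\!\!\vvee\textsf{Dst}$, while the converse goes through $\vvee\textsf{E}$ applied to $(\SET X{=}\SET x\cf\psi)\vvee(\SET X{=}\SET x\cf\chi)$, followed in each branch by an application of $\cf\!\!\textsf{Rpl}_C$ whose internal subderivation $[\psi]\vdash\psi\vvee\chi$ (or $[\chi]\vdash\psi\vvee\chi$) consists of a single $\vvee\textsf{I}$ step, and hence has only the discharged hypothesis as an assumption, respecting side condition~(3) of $\cf\!\!\textsf{Rpl}_C$. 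Item~(iv) is handled in the same spirit: the forward direction is the rule $\vee\!\vvee\textsf{Dst}$, and the converse is obtained by $\vvee\textsf{E}$, where in each branch one commutes the tensor disjunction via $\vee\textsf{Com}$, uses $\vee\textsf{Rpl}$ with the subderivation $[\psi]\vdash\psi\vvee\chi$ (resp.\ $[\chi]\vdash\psi\vvee\chi$) given by $\vvee\textsf{I}$, and commutes once more.

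The only clause that requires a mild maneuver is~(iii), since $\vee\textsf{Rpl}$ only allows replacing the \emph{left} disjunct of a $\vee$. Starting from $\varphi\wedge(\psi\vee\chi)$, my plan is to extract $\varphi$ and $\psi\vee\chi$ by $\wedge\textsf{E}$; then apply $\vee\textsf{Rpl}$ to $\psi\vee\chi$ with the subderivation $[\psi]\vdash\varphi\wedge\psi$ (the external $\varphi$ is available and is combined with $[\psi]$ by $\wedge\textsf{I}$) to obtain $(\varphi\wedge\psi)\vee\chi$; apply $\vee\textsf{Com}$ to swap the disjuncts to $\chi\vee(\varphi\wedge\psi)$; then apply $\vee\textsf{Rpl}$ again with the subderivation $[\chi]\vdash\varphi\wedge\chi$ to obtain $(\varphi\wedge\chi)\vee(\varphi\wedge\psi)$; and finally conclude $(\varphi\wedge\psi)\vee(\varphi\wedge\chi)$ by $\vee\textsf{Com}$.

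The main obstacle I anticipate is purely bookkeeping, namely verifying in~(iii) and in the backward direction of~(v) that the subderivations attached to $\vee\textsf{Rpl}$ and $\cf\!\!\textsf{Rpl}_C$ have the required shape: in~(iii), the external $\varphi$ may freely enter the $\vee\textsf{Rpl}$-subderivation because that rule imposes no restriction on undischarged assumptions, whereas in~(v) the $\cf\!\!\textsf{Rpl}_C$-subderivation must contain only the discharged hypothesis, which is the case because $\vvee\textsf{I}$ is a one-step rule with no further premises. Once these formal points are checked, each of the five derivations is entirely mechanical.
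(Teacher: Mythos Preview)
Your proposal is correct and follows essentially the same approach as the paper's proof: items (i)--(ii) via the standard $\vvee\textsf{I}/\vvee\textsf{E}$ argument, item (iii) via $\vee\textsf{Rpl}$ and $\vee\textsf{Com}$ (the paper explicitly remarks that $\vee\textsf{E}$ is unavailable here), and items (iv)--(v) via the distributivity rules $\vee\!\vvee\textsf{Dst}$, $\cf\!\!\vvee\textsf{Dst}$ for the forward directions and $\vvee\textsf{E}$ combined with $\vee\textsf{Rpl}$ or $\cf\!\!\textsf{Rpl}_C$ (plus $\vvee\textsf{I}$) for the converses. Your explicit verification of side condition~(3) for $\cf\!\!\textsf{Rpl}_C$ and of the unrestricted assumption-passing in $\vee\textsf{Rpl}$ is more detailed than the paper's sketch but entirely in line with it.
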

\begin{proof}
Items (i
),(ii
) follow from the standard argument using $\vvee\textsf{I}$ and $\vvee\textsf{E}$. Item (iii
) is proved easily by applying $\vee\textsf{Rpl}$ and $\lor$\textsf{Com}; we remark that it cannot be proved by the usual natural deduction arguments, as rule $\lor\textsf{E}$ is available only for $\CO$ formulas. 
For item (iv
), the left to right direction follows from $\vee\vvee\textsf{Dst}$. For the right to left direction, we first derive $\varphi\vee\psi\vdash\varphi\vee(\psi\vvee\chi)$ and $\varphi\vee\chi\vdash\varphi\vee(\psi\vvee\chi)$ by applying $\vvee\textsf{I}$ and $\vee\textsf{Rpl}$. Then $(\varphi\vee\psi)\vvee(\varphi\vee\chi)\vdash\varphi\vee(\psi\vvee\chi)$ follows from $\vvee\textsf{E}$. For item (v
), the left to right direction follows from $\cf\!\!\vvee$\textsf{Dst}, and the other direction is proved by using $\vvee\textsf{I}$,  $\cf\!\textsf{\corrb{Rpl}}_C$ and $\vvee\!\textsf{E}$.
\end{proof}

An important lemma towards the completeness theorem states that every $\COV$-formula $\varphi$ is provably equivalent to the $\vvee$-disjunction of a (finite) set of $\CO$-formulas. Formulas of this type are called \emph{resolutions} of $\varphi$ in \cite{CiaRoe2011}.


\begin{df}
Let $\varphi$ be a $\COV$-formula. Define the set $\mathcal R(\varphi)$ of its \textbf{resolutions} inductively as follows:
\begin{itemize}
\item[-] $\mathcal R(X=x) = \{X=x\}$,
\item[-] $\mathcal R(\neg\alpha) = \{\neg\alpha\}$, 
\item[-] $\mathcal R(\psi\land\chi) = \{\alpha\land\beta \ | \ \alpha \in \mathcal R(\psi), \beta \in \mathcal R(\chi) \}$,
\item[-] $\mathcal R(\psi\lor\chi) = \{\alpha\lor\beta \ | \ \alpha \in \mathcal R(\psi), \beta \in \mathcal R(\chi)\}$,
\item[-] $\mathcal R(\psi\vvee\chi) = \mathcal R(\psi)\cup\mathcal R(\chi)$,
\item[-] $\mathcal R(\SET X=\SET x\cf\varphi) = \{\SET X=\SET x\cf\alpha \ | \ \alpha \in \mathcal R(\varphi)\}$.
\end{itemize}
\end{df}

\noindent The set $\mathcal R(\varphi)$ is clearly a finite set of $\CO$-formulas. 

%

\begin{lm} \label{normal_form_derivable}
For any formula $\varphi\in \COV$, we have that 
\(\varphi\dashv\vdash\bigvvee \mathcal R(\varphi).\)
\end{lm}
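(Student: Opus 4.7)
The plan is to prove this by straightforward induction on the complexity of $\varphi$, using the derivable distributivity laws from Proposition \ref{derivable_rules_covvee} to push the global disjunction $\vvee$ to the outside.

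The base cases are immediate: if $\varphi = X=x$ or $\varphi = \neg\alpha$, then $\mathcal R(\varphi) = \{\varphi\}$, so $\bigvvee \mathcal R(\varphi) = \varphi$. The case $\varphi = \psi \vvee \chi$ is also essentially trivial: by the induction hypothesis $\psi \dashv\vdash \bigvvee \mathcal R(\psi)$ and $\chi \dashv\vdash \bigvvee \mathcal R(\chi)$, so using the associativity and commutativity of $\vvee$ (Proposition \ref{derivable_rules_covvee}(i)) together with $\vvee\textsf{I}$ and $\vvee\textsf{E}$, we derive $\psi \vvee \chi \dashv\vdash (\bigvvee \mathcal R(\psi)) \vvee (\bigvvee \mathcal R(\chi)) \dashv\vdash \bigvvee(\mathcal R(\psi) \cup \mathcal R(\chi))$.

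The three remaining cases ($\land$, $\lor$, $\cf$) all reduce to a single pattern: using the induction hypothesis, it suffices to show that the connective in question distributes (in both directions) over finite $\vvee$-disjunctions. Concretely:
\begin{itemize}
\item[-] For $\land$, iterate Proposition \ref{derivable_rules_covvee}(ii) to obtain $(\bigvvee_{i} \alpha_i) \land (\bigvvee_{j} \beta_j) \dashv\vdash \bigvvee_{i,j}(\alpha_i \land \beta_j)$.
\item[-] For $\lor$, iterate Proposition \ref{derivable_rules_covvee}(iv) (together with $\vee\textsf{Com}$) to obtain $(\bigvvee_{i} \alpha_i) \lor (\bigvvee_{j} \beta_j) \dashv\vdash \bigvvee_{i,j}(\alpha_i \lor \beta_j)$.
\item[-] For $\cf$, iterate Proposition \ref{derivable_rules_covvee}(v) to obtain $\SET X=\SET x \cf \bigvvee_{i} \alpha_i \dashv\vdash \bigvvee_{i}(\SET X = \SET x \cf \alpha_i)$.
\end{itemize}
In each case, the resulting $\vvee$-disjunction is exactly $\bigvvee \mathcal R(\varphi)$ by the definition of resolutions. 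To combine the distribution with the induction hypothesis, one uses $\vvee\textsf{E}$ and $\vvee\textsf{I}$ in the standard way, exploiting the fact that $\land$, $\lor$ and $\cf\!$ are congruences for provable equivalence (which follows, respectively, from $\land\textsf{I}/\land\textsf{E}$, from $\vee\textsf{Rpl}$ combined with $\vee\textsf{Com}$, and from $\cf\!\!\textsf{Rpl}_C$).

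No single step is hard in isolation, but some care is needed in the $\lor$-case, since we cannot use the usual $\lor\textsf{E}$ rule for non-$\CO$ formulas; this is precisely why $\vee\textsf{Rpl}$ was included in the calculus, and why Proposition \ref{derivable_rules_covvee}(iv) was derivable without appeal to $\lor\textsf{E}$. The main bookkeeping hurdle is iterating binary distributivity into an $n$-ary form, which is a routine sub-induction on the number of resolutions. Once this is handled, the overall proof is a direct structural induction, so I expect no substantive obstacle.
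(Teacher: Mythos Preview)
Your proposal is correct and follows essentially the same approach as the paper: a structural induction on $\varphi$, handling the base cases trivially, the $\vvee$-case via associativity/commutativity and $\vvee\textsf{I}/\vvee\textsf{E}$, and the $\land$, $\lor$, $\cf$ cases by distributing over $\vvee$ using Proposition~\ref{derivable_rules_covvee}(ii), (iv), (v) respectively, together with the appropriate congruence rules ($\land\textsf{I}/\land\textsf{E}$, $\vee\textsf{Rpl}$ with $\vee\textsf{Com}$, and $\cf\!\!\textsf{Rpl}_C$). Your remark about the $\lor$-case needing $\vee\textsf{Rpl}$ rather than $\lor\textsf{E}$ is exactly the point the paper makes implicitly by citing $\lor\textsf{Rpl}$ there.
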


\begin{proof}
We prove the lemma by induction on $\varphi$. 
If $\varphi$ is $X=x$ or $\neg\alpha$ for some  $\CO$-formula $\alpha$, then $\mathcal R(\varphi)=\{\varphi\}$, and 
$\varphi\dashv\vdash\bigvvee \mathcal R(\varphi)$ holds trivially. 



Now, suppose $\psi\dashv\vdash\bigvvee \mathcal R(\psi)$ and $\chi\dashv\vdash\bigvvee \mathcal R(\chi)$. If $\varphi=\psi\wedge\chi$, 
we derive by applying Proposition \ref{derivable_rules_covvee}(\ref{derivable_rules_covvee_con_distr}), $\land\textsf{I}$ and $\land\textsf{E}$
that 
%
\[\psi\wedge\chi\dashv\vdash \big(\bigvvee \mathcal R(\psi)\big)\wedge\big(\bigvvee \mathcal R(\chi)\big)\dashv\vdash \bigvvee\{\alpha\wedge \beta\mid \alpha\in \mathcal R(\psi),\beta\in \mathcal R(\chi)\}\dashv\vdash \bigvvee \mathcal R(\psi\wedge\chi).\]

If $\varphi=\psi\vee\chi$, we have analogous derivations using Proposition \ref{derivable_rules_covvee}(iv), $\lor\textsf{I}$ and $\lor\textsf{Rpl}$. 

If $\varphi=\psi\vvee\chi$, then by the rules $\vvee\textsf{I}$ and $\vvee\textsf{E}$ together with Proposition \ref{derivable_rules_covvee}(i)   
we have that
\[\psi\vvee\chi\dashv\vdash \big(\bigvvee \mathcal R(\psi)\big)\vvee\big(\bigvvee \mathcal R(\chi)\big)\dashv\vdash
\bigvvee \big(\mathcal R(\psi) \cup  \mathcal R(\chi)\big)
\dashv\vdash \bigvvee \mathcal R(\psi\vvee\chi).\]

If $\varphi=\SET X=\SET x\cf\psi$, then \allowdisplaybreaks
\begin{align*}
\SET X=\SET x\cf\psi&\dashv\vdash \SET X=\SET x\cf \bigvvee \mathcal R(\psi)\tag{$\cf\!\!\textsf{Rpl}_C$}
\\
&\dashv\vdash \bigvvee \{\SET X=\SET x\cf \alpha\mid \alpha\in \mathcal R(\psi)\} \tag{Proposition \ref{derivable_rules_covvee}(v)
}
\\
&\dashv\vdash \bigvvee \mathcal R(\SET X=\SET x\cf\psi).
\end{align*}
\end{proof}

\begin{teo}[Completeness]\label{TEOCOMPLCOU}
For any set  $\Gamma\cup\{\psi\}$ of $\COV$-formulas, we have that 
\(\Gamma\models^g\psi\iff\Gamma\vdash\psi.\)
\end{teo}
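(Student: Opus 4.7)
The plan is to establish soundness by a routine check of the new rules (noting that $\vvee$, $\vee\textsf{Rpl}$, $\cf\!\!\vee\textsf{Dst}$, $\vee\!\vvee\!\textsf{Dst}$, and $\cf\!\!\vvee\!\textsf{Dst}$ are all easily verified from the semantic clauses, and reusing soundness of the $\CO$ rules from Theorem \ref{completeness_co}). For completeness, the strategy is to use Lemma \ref{normal_form_derivable} to reduce to $\CO$-reasoning via the resolutions, and then invoke the disjunction property (Theorem \ref{splitting_prop}) together with the completeness of $\CO$ (Theorem \ref{completeness_co}).

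First I would observe that since the signature $\sigma$ is finite, the set $\SEM$ is finite, and hence the collection of generalized causal teams is finite as well; in particular, up to $\approx$-equivalence, there are only finitely many formulas modulo $\equiv^g$. This yields a trivial form of compactness: whenever $\Gamma\models^g\psi$, already $\Gamma_0\models^g\psi$ for some finite subset $\Gamma_0\subseteq\Gamma$. It thus suffices to treat the case $\Gamma=\{\gamma_1,\dots,\gamma_n\}$. By Lemma \ref{normal_form_derivable}, each $\gamma_i\dashv\vdash\bigvvee\mathcal R(\gamma_i)$ and $\psi\dashv\vdash\bigvvee\mathcal R(\psi)$, where all formulas in $\mathcal R(\gamma_i)$ and $\mathcal R(\psi)$ are in $\CO$.

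For every choice function $f$ assigning to each $i\in\{1,\dots,n\}$ some $f(i)\in\mathcal R(\gamma_i)$, let $\Gamma_f:=\{f(1),\dots,f(n)\}\subseteq\CO$. A direct semantic argument (using soundness of $\vvee\textsf{I}$) shows $\Gamma\models^g\psi\Rightarrow \Gamma_f\models^g\psi$: if $T\models\Gamma_f$, then $T\models f(i)$ implies $T\models\bigvvee\mathcal R(\gamma_i)\equiv\gamma_i$, hence $T\models\Gamma$, so $T\models\psi$. Since each $\Gamma_f$ is a set of $\CO$-formulas, the disjunction property (Theorem \ref{splitting_prop}) applied to $\Gamma_f\models^g\bigvvee\mathcal R(\psi)$ yields some $\beta_f\in\mathcal R(\psi)$ with $\Gamma_f\models^g\beta_f$. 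Since $\beta_f$ is itself a $\CO$-formula, the completeness of $\CO$ (Theorem \ref{completeness_co}) gives $\Gamma_f\vdash\beta_f$, whence $\Gamma_f\vdash\bigvvee\mathcal R(\psi)\vdash\psi$ using $\vvee\textsf{I}$ and Lemma \ref{normal_form_derivable}.

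Finally, to lift the finitely many derivations $\Gamma_f\vdash\psi$ (indexed by choice functions $f$) to a single derivation $\Gamma\vdash\psi$, I would proceed by iterated case-analysis: starting from $\gamma_1\vdash\bigvvee_{\alpha\in\mathcal R(\gamma_1)}\alpha$ via Lemma \ref{normal_form_derivable}, apply $\vvee\textsf E$ to split on the value of $f(1)$; within each branch $\alpha\in\mathcal R(\gamma_1)$, apply the same procedure to $\gamma_2$, and so on. After $n$ layers the leaves are exactly the derivations $\Gamma_f\vdash\psi$, and the discharged assumptions combine to give $\Gamma\vdash\psi$. The main obstacle is purely notational: organizing the nested $\vvee\textsf E$ applications so that the $\CO$-assumptions in $\Gamma_f$ are correctly discharged in the right order. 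Once this bookkeeping is made precise (by induction on $n$), the argument is schematic and the completeness follows.
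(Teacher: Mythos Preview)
Your proposal is correct and follows essentially the same strategy as the paper: reduce to $\CO$ via resolutions (Lemma \ref{normal_form_derivable}), apply the disjunction property (Theorem \ref{splitting_prop}) to a $\CO$ premise set, invoke $\CO$-completeness (Theorem \ref{completeness_co}), and reassemble with $\vvee\textsf{I}$/$\vvee\textsf{E}$. The only difference is in the handling of $\Gamma$: the paper observes that, since there are only finitely many classes of (generalized) causal teams over $\sigma$, any set $\Gamma$ is semantically equivalent to a \emph{single} $\COV$-formula $\varphi$, and then works with the resolutions $\mathcal R(\varphi)$ directly --- avoiding your choice-function indexing and the nested $\vvee\textsf{E}$ bookkeeping. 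Your compactness-plus-iterated-case-analysis route achieves the same thing, just with a bit more overhead.
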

\begin{proof}
We prove the ``$\Longrightarrow$'' direction. Observe that there are only finitely many classes of causal teams of signature $\sigma$. Thus, any set of $\COV$-formulas is equivalent to a single $\COV$-formula, and it then suffices to prove the statement for $\Gamma = \{\varphi\}$.  

Now suppose $\varphi\models\psi$. Then by Lemma \ref{normal_form_derivable} 
and soundness we have that
\(
\bigvvee \mathcal R(\varphi) \models \bigvvee\mathcal R(\psi).
\)
Thus, for every $\gamma\in \mathcal R(\varphi)$, 
\(
\gamma \models^g \bigvvee\mathcal R(\psi),
\)
which further implies, 
by 
Lemma \ref{splitting_prop}, that there is an $\alpha_\gamma\in \mathcal R(\psi)$ such that $\gamma \models \alpha_\gamma$. Since $\gamma,\alpha_\gamma$ are $\CO$-formulas, and the system for $\COV$ extends that for $\CO$, we obtain by  the completeness theorem of $\CO$ (Theorem \ref{completeness_co})  that $\gamma \vdash \alpha_\gamma$. 
Applying $\vvee$\textsf{I} and Lemma \ref{normal_form_derivable}, we obtain
 \(
\gamma \vdash \bigvvee \mathcal R(\psi) \vdash \psi
\)
for each $\gamma\in \mathcal R(\varphi)$. Thus, by Lemma \ref{normal_form_derivable} and $\vvee$\textsf{E}, we conclude that
 \(
\varphi\vdash\bigvvee \mathcal R(\varphi) \vdash \psi.
\)
\end{proof}
\subsection{Axiomatizing $\COd$ over generalized causal teams}\label{sec:axiom_cod_g}



In this section we provide calculi for the language $\COD$, as interpreted over the generalized semantics. The resulting system is a variant of the deduction system for propositional dependence logic given in \cite{YanVaa2016}.


Given a subformula $\theta$  of $\varphi$, we use the 
notation $\varphi(\psi/[\theta,k])$ for the formula obtained by replacing \emph{the $k$th occurrence}  of the formula $\theta$ in $\varphi$ with $\psi$. For instance, if $\varphi=\,\,\depc{X}\vee(Y=y\cf\depc{X})$, then $\varphi(X=x/[\depc{X},2])=\,\,\depc{X}\vee(Y=y\cf X=x)$.

\begin{df}\label{system_cod_g}
The system for $\COD$ over generalized causal teams consists of all the rules of the system for $\CO$ (Definition \ref{co-system-df}) plus the additional rules for $\vee$ from Definition \ref{COV_system_df} and the following rules for dependence atoms:

\vspace{-10pt}

{\normalfont \begin{center}
\renewcommand{\arraystretch}{1.8}
\begin{tabular}{|C{0.4\linewidth}C{0.54\linewidth}|}
\hline
\AxiomC{}\noLine\UnaryInfC{}\noLine\UnaryInfC{}\noLine\UnaryInfC{$X=x$} \RightLabel{$\textsf{ConI}$}\UnaryInfC{$\depc{X}$}\noLine\UnaryInfC{}\noLine\UnaryInfC{}\noLine\UnaryInfC{}\noLine\UnaryInfC{}\DisplayProof 

&

\AxiomC{}\noLine\UnaryInfC{}\noLine\UnaryInfC{}\noLine\UnaryInfC{}\noLine\UnaryInfC{$\dep{X_1,\dots,X_n}{Y}$}\AxiomC{$\depc{X_1}\,\dots\, \depc{X_n}$} \RightLabel{$\textsf{DepE}$}\BinaryInfC{$\depc{Y}$}\noLine\UnaryInfC{}\noLine\UnaryInfC{}\noLine\UnaryInfC{}\noLine\UnaryInfC{}\DisplayProof
\\


\AxiomC{}\noLine\UnaryInfC{}\noLine\UnaryInfC{}\noLine\UnaryInfC{}\noLine\UnaryInfC{}\noLine\UnaryInfC{}\noLine\UnaryInfC{}\noLine\UnaryInfC{}\noLine\UnaryInfC{$\varphi$}\AxiomC{}\noLine\UnaryInfC{$\forall x\in Ran(X)$}\noLine\UnaryInfC{[$\varphi(X=x/[\depc{X},k])$]}\noLine\UnaryInfC{$\vdots$}\noLine\UnaryInfC{$\psi$}\RightLabel{$\textsf{ConE}$}\BinaryInfC{$\psi$}\noLine\UnaryInfC{} \DisplayProof

&

\AxiomC{}\noLine\UnaryInfC{[$\depc{X_1}$]} \AxiomC{}\noLine\UnaryInfC{$\dots$} \AxiomC{}\noLine\UnaryInfC{[$\depc{X_n}$]}\noLine\TrinaryInfC{}
\branchDeduce
\DeduceC{$\depc{Y}$}\RightLabel{$\mathsf{DepI}$}\UnaryInfC{$\dep{X_1,\dots,X_n}{Y}$}\DisplayProof
\\
\hline
\end{tabular}
\end{center}

\noindent Again, for the rules from Definition \ref{co-system-df} we assume that $\alpha$ ranges over $\CO$ formulas only. The rule $\textsf{ConE}$ states that if $\psi$ can be derived from the formula $\varphi(X=x/[\depc{X},k])$ for every $x\in \Ran(X)$, then we conclude $\psi$ from $\varphi$. The rule $\mathsf{DepI}$ is read as: if the constancy atom $\depc{Y}$ can be derived from the constancy atoms $\depc{X_1},\dots,\depc{X_n}$ (possibly together with other assumptions $\Gamma$), then the dependence atom $\dep{X_1,\dots,X_n}{Y}$ follows (from $\Gamma$).
}

\end{df}

\noindent Since the above system  contains all applicable rules from the system for $\COV$, Proposition \ref{derivable_rules_co} and Proposition \ref{derivable_rules_covvee}(iii) are still derivable in the system for $\COD$.
It is easy to show that the entailment relation $\vdash$ is monotone with respect to the substitution of positive subformula occurrences. 

\begin{lm}\label{monotonicity_derivations}
Let $\varphi,\theta,\theta'$ be $\COD$-formulas. Suppose $\theta$ is a subformula of $\varphi$ that is not in the scope of negation $\neg$, nor in the antecedent of a counterfactual implication $\cf$. Then, $\theta\vdash\theta'$ implies $\varphi\vdash\varphi(\theta'/[\theta,k])$.
\end{lm}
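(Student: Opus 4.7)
The plan is to proceed by induction on the structure of the formula $\varphi$. As a preliminary observation, if $\theta$ coincides with $\varphi$ itself, then $k=1$, $\varphi(\theta'/[\theta,1])=\theta'$, and the conclusion is immediate from the hypothesis. Otherwise $\theta$ is a proper subformula and I split on the outer connective of $\varphi$. The atomic cases $\varphi=X=x$ and $\varphi=\dep{\SET X}{Y}$ collapse to this base case. The case $\varphi=\neg\alpha$ does not arise, since any proper subformula of $\neg\alpha$ lies in the scope of $\neg$, which the positivity assumption excludes. Thus three cases remain: conjunction, disjunction, and counterfactual.

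For $\varphi=\psi_1\wedge\psi_2$, the $k$th occurrence of $\theta$ in $\varphi$ falls in one of the conjuncts, say at position $k'$ of $\psi_1$, where it remains in a positive position. The induction hypothesis then gives $\psi_1\vdash\psi_1(\theta'/[\theta,k'])$. From $\varphi$ I derive $\psi_1$ and $\psi_2$ using $\wedge\textsf{E}$, apply the inductive subderivation to the first conjunct, and reassemble by $\wedge\textsf{I}$ to obtain $\psi_1(\theta'/[\theta,k'])\wedge\psi_2=\varphi(\theta'/[\theta,k])$. For $\varphi=\SET X=\SET x\cf\psi$, the positivity hypothesis places the occurrence inside $\psi$, hence not in the antecedent, so the induction hypothesis yields $\psi\vdash\psi(\theta'/[\theta,k'])$, and a single application of $\cf\!\!\textsf{Rpl}_C$ produces $\varphi\vdash\SET X=\SET x\cf\psi(\theta'/[\theta,k'])$, as required.

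The disjunctive case $\varphi=\psi_1\vee\psi_2$ is where I expect the main conceptual obstacle. The naive approach would be to appeal to $\vee\textsf{E}$, but in our calculus (inherited from Definition \ref{co-system-df}) this rule is restricted to $\CO$-formula conclusions, whereas $\psi_i(\theta'/[\theta,k'])$ need not be a $\CO$-formula. The additional rule $\vee\textsf{Rpl}$ imported from the $\COV$ system by Definition \ref{system_cod_g} is precisely what bypasses this restriction: assuming without loss of generality that the occurrence sits in $\psi_1$ at position $k'$, the induction hypothesis supplies a subderivation $[\psi_1]\vdots\psi_1(\theta'/[\theta,k'])$, and applying $\vee\textsf{Rpl}$ to $\varphi$ with this subderivation yields $\psi_1(\theta'/[\theta,k'])\vee\psi_2=\varphi(\theta'/[\theta,k])$. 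If the occurrence instead lies in $\psi_2$, two flanking applications of $\vee\textsf{Com}$ reduce the situation to the previous subcase. The inductive invariant that the replaced occurrence remains positive in the relevant subformula is preserved throughout, completing the argument.
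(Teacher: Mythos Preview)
Your proof is correct and follows the same inductive approach as the paper's (one-line) argument. The paper additionally mentions $\cf\!\!\textsf{Rpl}_A$ in the counterfactual case, but your observation that only $\cf\!\!\textsf{Rpl}_C$ is actually needed---since the positivity hypothesis already rules out occurrences in the antecedent---is accurate, and your explicit handling of the $\vee$ case via $\vee\textsf{Rpl}$ and $\vee\textsf{Com}$ fills in exactly the detail the paper leaves implicit.
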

\begin{proof}
A routine inductive proof, using $\cf\!\!\textsf{Rpl}_A$ and $\cf\!\!\textsf{Rpl}_C$ in  the $\cf$ case.
\end{proof}

In the next proposition we derive some other useful clauses involving dependence atoms.

\begin{prop}\label{derivable_rules_cod}
\begin{enumerate}
\item[(i)]\label{derivable_rules_cod_1} $\dep{\SET X}{Y},\SET X=\SET x\vdash\depc{Y}$

\item[(ii)]\label{derivable_rules_cod_2} $\dep{\SET X}{Y}\dashv\vdash \bigvee_{\SET x \in Ran(\SET X)} (\SET X = \SET x \ \wedge  \depc{Y})$ 
\end{enumerate}
\end{prop}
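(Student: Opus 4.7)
Item (i) has a short derivation: decomposing $\SET X = \SET x$ into $X_1 = x_1, \dots, X_n = x_n$ by $\land\textsf{E}$ and applying $\textsf{ConI}$ gives $\depc{X_1}, \dots, \depc{X_n}$, and a single application of $\textsf{DepE}$ with $\dep{\SET X}{Y}$ then yields $\depc{Y}$.

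For the left-to-right direction of (ii), I would first derive $\vdash \bigvee_{\SET x \in \Ran(\SET X)} \SET X = \SET x$ by iterating $\textsf{ValDef}$ together with the distributivity of $\land$ over $\lor$ (Proposition \ref{derivable_rules_covvee}(iii)). Conjoining with the hypothesis $\dep{\SET X}{Y}$ via $\land\textsf{I}$ and distributing once more yields $\bigvee_{\SET x}(\dep{\SET X}{Y} \land \SET X = \SET x)$; item (i) together with $\land\textsf{I}$ then gives $\dep{\SET X}{Y} \land \SET X = \SET x \vdash \SET X = \SET x \land \depc{Y}$ inside each disjunct, and Lemma \ref{monotonicity_derivations} (or, equivalently, iterated $\vee\textsf{Rpl}$) delivers the target disjunction.

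For the right-to-left direction, write $\Phi$ for $\bigvee_{\SET x \in \Ran(\SET X)}(\SET X = \SET x \land \depc{Y})$. My plan is to apply $\mathsf{DepI}$, reducing the goal to deriving $\depc{Y}$ from $\Phi$ under the discharged assumptions $\depc{X_1}, \dots, \depc{X_n}$. Iterating $\textsf{ConE}$ on each $\depc{X_i}$ further reduces matters to proving, for every $\SET x^* \in \Ran(\SET X)$, the entailment $\Phi, \SET X = \SET x^* \vdash \depc{Y}$. Under the auxiliary assumption $\SET X = \SET x^*$, each disjunct $\SET X = \SET x \land \depc{Y}$ of $\Phi$ either directly yields $\depc{Y}$ by $\land\textsf{E}$ (when $\SET x = \SET x^*$) or yields $\bot$ via $\textsf{ValUnq}$ and $\neg\textsf{E}$ (when $\SET x \neq \SET x^*$); iterated $\vee\textsf{Rpl}$, combined with $\vee\textsf{Com}$ and $\vee\textsf{Ass}$ for rearrangement, therefore gives $\Phi, \SET X = \SET x^* \vdash \depc{Y} \vee \bot \vee \cdots \vee \bot$.

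The hard part will be the final collapse $\depc{Y} \vee \bot \vee \cdots \vee \bot \vdash \depc{Y}$: since $\depc{Y}$ is not a $\CO$-formula, $\vee\textsf{E}$ cannot be applied to reach this conclusion directly. The trick is to invoke $\textsf{ConE}$ on the unique occurrence of $\depc{Y}$ in this formula; for each $y \in \Ran(Y)$, substituting $Y=y$ for $\depc{Y}$ produces the pure $\CO$-formula $(Y=y) \vee \bot \vee \cdots \vee \bot$, from which $Y=y$ follows by iterated $\vee\textsf{E}$ and ex falso, and then $\depc{Y}$ follows by $\textsf{ConI}$. Applying $\textsf{ConE}$ discharges the disjunction and yields $\depc{Y} \vee \bot \vee \cdots \vee \bot \vdash \depc{Y}$, completing the derivation of item (ii).
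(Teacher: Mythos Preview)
Your proof is correct and follows the same overall architecture as the paper's---\textsf{DepI} to reduce to constancy atoms, \textsf{ConE} to instantiate them, \textsf{ValUnq} to kill the mismatched disjuncts, and $\lor\textsf{E}$ once everything is in $\CO$---but you apply \textsf{ConE} in a different order.

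The paper instantiates \emph{all} constancy atoms at once: not only the assumptions $\depc{X_i}$ but also the occurrences of $\depc{Y}$ inside $\Phi$. This turns $\Phi$ into the $\CO$-formula $\bigvee_{\SET x}(\SET X=\SET x\wedge Y=y)$ from the outset, so that after distributing $\SET X=\SET x'$ across the disjunction and applying \textsf{ValUnq}, one can collapse $(\SET X=\SET x'\wedge Y=y)\vee\bigvee\bot$ directly by the ordinary $\lor\textsf{E}$ rule, then finish with $\land\textsf{E}$ and \textsf{ConI}. In other words, the paper sidesteps precisely the ``hard part'' you identify, by never letting $\depc{Y}$ appear in a disjunction that needs collapsing. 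Your route---keeping $\depc{Y}$ around, reaching $\depc{Y}\vee\bot\vee\cdots\vee\bot$, and only then invoking \textsf{ConE} on its unique $\depc{Y}$ occurrence---is equally valid and has the mild advantage that only a single \textsf{ConE} on $\depc{Y}$ is needed (rather than one per disjunct of $\Phi$), at the cost of the extra collapse step. Either ordering is fine; the paper's is slightly more streamlined, yours makes the obstacle posed by the restricted $\lor\textsf{E}$ more explicit.
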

\begin{proof}
Item (i) follows from  $\textsf{ConI}$ and $\textsf{DepE}$. We now prove item (ii). We will implicitly use Lemma \ref{monotonicity_derivations} in many of the steps. For the left to right direction, we first have by $\textsf{ValDef}$  that $\vdash \bigvee_{\SET x \in Ran(\SET X)} \SET X = \SET x $. Thus, 
\begin{align*}
\dep{\SET X}{Y}&\vdash\, \dep{\SET X}{Y}\wedge (\bigvee_{\SET x \in Ran(\SET X)} \SET X = \SET x)\tag{$\land\textsf{I}$}\\
&\vdash \bigvee_{\SET x \in Ran(\SET X)} (\SET X = \SET x\wedge  \dep{\SET X}{Y})\tag{Proposition \ref{derivable_rules_covvee}(iii), $\lor$\textsf{Com}}\\
&\vdash \bigvee_{\SET x \in Ran(\SET X)} (\SET X = \SET x\wedge  \depc{Y})\tag{item (i), $\vee\textsf{Rpl}$}.
\end{align*}
For the right to left direction, putting $\SET X=\langle X_1,\dots,X_n\rangle$, by $\textsf{DepI}$, it suffices to derive $\depc{X_1},\dots,\depc{X_n}, \bigvee_{\SET x \in Ran(\SET X)} (\SET X = \SET x\wedge  \depc{Y})\vdash \depc{Y}$, which, by $\textsf{ConE}$, is further reduced to deriving $\SET X=\SET x', \bigvee_{\SET x \in Ran(\SET X)} (\SET X = \SET x\wedge  Y=y)\vdash \depc{Y}$ for all $\SET x'\in \Ran(\SET X)$ and all $y\in \Ran(Y)$. Now, we have that
\begin{align*}
    \SET X=\SET x', \bigvee_{\SET x \in Ran(\SET X)} (\SET X = \SET x\wedge  Y=y)&\vdash \bigvee_{\SET x' \in Ran(\SET X)} (\SET X = \SET x\wedge \SET X = \SET x'\wedge  Y=y)\tag{$\land\textsf{I}$, Proposition \ref{derivable_rules_covvee}(iii)}\\
    &\vdash  (\SET X = \SET x\wedge  Y=y)\vee\bigvee_{\substack{\SET x'\neq \SET x \\ \SET x' \in Ran(\SET X)}} \bot \tag{\textsf{ValUnq}}\\
   &\vdash  \SET X = \SET x\wedge  Y=y \tag{$\neg\textsf{E}$, $\lor\textsf{E}$}\\
&\vdash  \depc{Y}\tag{$\land\textsf{E}$, $\textsf{ConI}$}.
\end{align*}
\end{proof}

\noindent Since dependence atoms never occur in the scope of $\neg$ or in the antecedent of $\cf$, by Lemma \ref{monotonicity_derivations} and Proposition \ref{derivable_rules_cod}(ii), we can transform every $\COD$-formula into an equivalent one with constancy dependence atoms only. 

\begin{coro}\label{cod_constancy_nf}
For any \COD-formula $\varphi$, let $\varphi^\ast$ be the formula obtained from $\varphi$ by replacing every occurrence of $\dep{\SET X}{Y}$ by $\bigvee_{\SET x \in Ran(\SET X)} (\SET X = \SET x\wedge  \depc{Y})$. Then $\varphi\dashv\vdash\varphi^\ast$.
\end{coro}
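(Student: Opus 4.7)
The plan is to argue by iterated substitution, invoking Proposition~\ref{derivable_rules_cod}(ii) at each dependence atom and transporting the resulting interprovability outwards by Lemma~\ref{monotonicity_derivations}. The essential preliminary observation is syntactic: inspecting the grammar of $\COD$, negation $\neg$ is applied only to $\CO$-formulas (which contain no dependence atoms), and the antecedent of a counterfactual $\SET X = \SET x \cf \psi$ is always a conjunction of equalities. Consequently, every occurrence of a dependence atom $\dep{\SET X}{Y}$ in a $\COD$-formula sits in a position which is neither inside the scope of $\neg$ nor in the antecedent of $\cf$. This is precisely the hypothesis needed for Lemma~\ref{monotonicity_derivations} to apply at that occurrence.

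Now enumerate all occurrences of dependence atoms in $\varphi$ as $\theta_1, \dots, \theta_n$ (from left to right, say). Set $\varphi_0 := \varphi$, and define $\varphi_{i+1}$ to be the formula obtained from $\varphi_i$ by replacing the (leftmost remaining) occurrence $\theta_{i+1} = \dep{\SET X}{Y}$ with the disjunction $\bigvee_{\SET x \in \Ran(\SET X)} (\SET X = \SET x \wedge \depc{Y})$. By Proposition~\ref{derivable_rules_cod}(ii) we have
\[
\dep{\SET X}{Y} \dashv\vdash \bigvee_{\SET x \in \Ran(\SET X)} (\SET X = \SET x \wedge \depc{Y}),
\]
and since the position of $\theta_{i+1}$ in $\varphi_i$ meets the hypothesis of Lemma~\ref{monotonicity_derivations} (substitution does not disturb this, as the substituted disjunction does not introduce new negations or counterfactual antecedents), we obtain $\varphi_i \dashv\vdash \varphi_{i+1}$. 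Chaining these $n$ interprovabilities yields $\varphi = \varphi_0 \dashv\vdash \varphi_n = \varphi^\ast$, as required.

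The only step that could be considered delicate is checking that Lemma~\ref{monotonicity_derivations}'s hypothesis is preserved along the chain of substitutions (i.e.\ that replacing one dependence atom does not accidentally move a still-untreated dependence atom into the scope of a $\neg$ or into a $\cf$-antecedent). This is immediate, however, because the replacing formula $\bigvee_{\SET x} (\SET X = \SET x \wedge \depc{Y})$ introduces only $\vee$, $\wedge$, equalities and constancy atoms, none of which create new negation scopes or counterfactual antecedents. Thus the induction goes through without obstruction, and the corollary follows.
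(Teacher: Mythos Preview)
Your proof is correct and follows essentially the same approach as the paper: observe that the grammar of $\COD$ guarantees dependence atoms never occur in the scope of $\neg$ or in a $\cf$-antecedent, then combine Proposition~\ref{derivable_rules_cod}(ii) with Lemma~\ref{monotonicity_derivations} and iterate. Your version is simply more explicit about the iteration and the preservation of the lemma's hypothesis along the chain, which the paper leaves to the reader.
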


The soundness of $\textsf{DepI}$ and $\textsf{DepE}$ follows from the easily verified fact that
\[\Gamma,\depc{X_1},\dots,\depc{X_n}\models\,\depc{Y}\iff \Gamma\models\,\dep{X_1,\dots,X_n}{Y}.\]
The rule $\textsf{ConI}$ is clearly sound, since $\depc{X}\equiv \bigvvee_{x\in \Ran(X)}X=x$.  The soundness of the more complex rule $\textsf{ConE}$ follows from the equivalence
\begin{equation}\label{eq_phi_phi_f}
    \varphi\equiv\varphi(\bigvvee_{x\in \Ran(X)}X=x/[\depc{X},k])\equiv \bigvvee_{x\in \Ran(X)}\varphi(X=x/[\depc{X},k]),
\end{equation}
which is a special case of a more general equivalence we shall prove in Lemma \ref{instantiation_form} below. In order to state the more general fact, let us first introduce some terminologies.

Let $\mathbf{d}=\langle [\depc{X_1},k_1],\dots,[\depc{X_n},k_n]\rangle$ be a sequence of occurrences of constancy dependence atoms in a formula $\varphi$. A function $f:\{1,\dots,n\}\to \bigcup_{1\leq i\leq n}\Ran(X_i)$ is called an \textbf{instantiating function} over $ \mathbf{d}$ if $f(i)\in \Ran(X_i)$ for each $i$.   
We define an \textbf{$f$-instantiation} of $\varphi$, denoted by $\varphi_{f}$, as the formula 
\[\varphi(X_1=f(1)/\![\depc{X_1},k_1]\,\dots,X_n=f(n)/\![\depc{X_n},k_n]).\]
If $\mathbf{d}$ lists all occurrences of constancy dependence atoms in $\varphi$, then we call the  formula $\varphi_{f}$ a \textbf{full instantiation} of $\varphi$.

\begin{lm}\label{instantiation_form}
 Let $\mathbf{d}$ be a sequence of occurrences of constancy atoms in a $\COD$-formula $\varphi$, and $F$ denote the set of all  instantiating functions over $\mathbf{d}$. Then $\varphi\equiv\bigvvee_{f\in F}\varphi_{f}$. 
\end{lm}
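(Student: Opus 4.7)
The plan is to prove the equivalence by induction on the structure of $\varphi$, uniformly in the choice of $\mathbf{d}$. This way the connective cases can dispatch the claim to appropriately restricted sub-sequences of $\mathbf{d}$ for each immediate subformula.

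For the base cases, if $\varphi$ is $X=x$, $\neg \alpha$, or a genuine dependence atom $\dep{\SET{X}}{Y}$ with $\SET{X}\neq\emptyset$, then $\varphi$ contains no constancy atoms, so $\mathbf{d}$ is empty, $F$ is the singleton containing the empty function $f_\emptyset$, and $\varphi_{f_\emptyset}=\varphi$. The only nontrivial base case is $\varphi = \depc{Y}$ with $\mathbf{d}=\langle[\depc{Y},1]\rangle$: here $F = \{f_y : y\in\Ran(Y)\}$ with $f_y(1)=y$, each $\varphi_{f_y}$ is $Y=y$, and the conclusion is exactly the equivalence $\depc{Y}\equiv\bigvvee_{y\in\Ran(Y)}Y=y$ from equation \eqref{con_atm_dfnable_vvee}.

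For the inductive cases, split $\mathbf{d}$ according to which subformula each occurrence belongs to, and identify an instantiating function $f\in F$ with the corresponding pair (or singleton, in the $\cf$ case) of instantiating functions on the sub-sequences. In the case $\varphi=\psi\wedge\chi$, write $\mathbf{d}$ as the concatenation of $\mathbf{d}_\psi$ and $\mathbf{d}_\chi$, with instantiating function sets $G$ and $H$ respectively; then $\varphi_f=\psi_g\wedge\chi_h$ for the pair $(g,h)$ corresponding to $f$, and by the induction hypothesis together with Proposition \ref{derivable_rules_covvee}(ii),
\[
\psi\wedge\chi \equiv \Big(\bigvvee_{g\in G}\psi_g\Big)\wedge\Big(\bigvvee_{h\in H}\chi_h\Big) \equiv \bigvvee_{(g,h)\in G\times H}(\psi_g\wedge\chi_h)\equiv \bigvvee_{f\in F}\varphi_f.
\]
The case $\varphi=\psi\vee\chi$ is analogous, using the distributive law $\psi\vee(\chi_1\vvee\chi_2)\equiv(\psi\vee\chi_1)\vvee(\psi\vee\chi_2)$ from Proposition \ref{derivable_rules_covvee}(iv) (applied twice, together with commutativity of $\vee$). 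The case $\varphi=\SET{X}=\SET{x}\cf\psi$ uses the fact that all occurrences in $\mathbf{d}$ lie in $\psi$ (since constancy atoms cannot appear in antecedents of counterfactuals), and Proposition \ref{derivable_rules_covvee}(v) gives
\[
\SET{X}=\SET{x}\cf\psi \equiv \SET{X}=\SET{x}\cf\bigvvee_{g\in G}\psi_g \equiv \bigvvee_{g\in G}\big(\SET{X}=\SET{x}\cf\psi_g\big)\equiv \bigvvee_{f\in F}\varphi_f.
\]

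The main obstacle is purely bookkeeping: one has to ensure that the indexing of occurrences in $\mathbf{d}$ is consistent with the split into sub-sequences when passing to subformulas, and that the bijection between $F$ and $G\times H$ (or between $F$ and $G$ in the $\cf$ case) produces exactly the full instantiations $\varphi_f$. Once this correspondence is set up, the real content of the argument reduces to equation \eqref{con_atm_dfnable_vvee} at atoms and to the three distributive equivalences for $\vvee$ over $\wedge$, $\vee$, and $\cf$.
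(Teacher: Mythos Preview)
Your proof is correct and follows essentially the same inductive strategy as the paper's: the base case for constancy atoms reduces to equation \eqref{con_atm_dfnable_vvee}, and the connective cases use the distributive laws of $\vvee$ over $\wedge$, $\vee$, and $\cf$. One small wrinkle: the propositions you cite (Proposition \ref{derivable_rules_covvee}(ii),(iv),(v)) are provability results in the $\COV$ calculus, whereas here you need the corresponding \emph{semantic} equivalences for formulas that may contain dependence atoms; these equivalences are immediate from the semantic clauses (and this is how the paper phrases them), so the appeal is harmless but technically slightly misplaced.
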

\begin{proof}
We prove the proposition by induction on $\varphi$. If $\varphi$ is a constancy dependence atom $\depc{X_1}$, it is easy to verify that $\depc{X_1}\equiv (\bigvvee_{x\in \Ran(X_i)}X_1=x)=(\bigvvee_{f\in F}X_1=f(1))$.
If $\varphi$ is a $\CO$-formula, then the claim is trivial. 

If $\varphi=\psi\ast\chi$ for $\ast\in\{\wedge,\vee\}$, then the claim follows from the induction hypothesis and the fact that $(\eta\vvee\theta)\ast\chi\equiv (\eta\ast\chi)\vvee(\theta\ast\chi)$. If $\varphi=\SET X=\SET x\cf \psi$, then by induction hypothesis we have that $\psi\equiv\bigvvee_{f\in F}\psi_{f}$. We then have that
\[\SET X=\SET x\cf\psi\,\equiv\, \SET X=\SET x\cf\bigvvee_{f\in F}\psi_{f}\,\equiv\, \bigvvee_{f\in F}( \SET X=\SET x\cf\psi_{f})\,\equiv\, \bigvvee_{f\in F}(\SET X=\SET x\cf\psi)_{f}.\]
\end{proof}

\noindent Now, we know that the equivalence (\ref{eq_phi_phi_f}) holds, and thus our system for  $\COD$ is sound.

For any \COD-formula $\varphi$, consider the equivalent formula $\varphi^\ast$ given by Corollary \ref{cod_constancy_nf}. Denote the set of all full  instantiations of $\varphi^\ast$ by $\I(\varphi)$. Since all dependence atoms occurring in $\varphi^\ast$ are constancy atoms, $\I(\varphi)$ is a finite set of $\CO$-formulas.

\begin{coro}\label{instantiation_nf_sem}
$\varphi\equiv\bigvvee\I(\varphi)$.
\end{coro}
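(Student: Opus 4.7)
The plan is to obtain the conclusion as a direct consequence of the two facts immediately preceding it: Corollary \ref{cod_constancy_nf}, which reduces an arbitrary \COD-formula to one in which every dependence atom is a constancy atom, and Lemma \ref{instantiation_form}, which handles the case of constancy atoms. Since the system for \COD{} has already been shown to be sound over generalized causal teams, the provable equivalence $\varphi \dashv\vdash \varphi^\ast$ from Corollary \ref{cod_constancy_nf} immediately yields the semantic equivalence $\varphi \equiv \varphi^\ast$.

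The key step is then to apply Lemma \ref{instantiation_form} to $\varphi^\ast$, taking $\mathbf{d}$ to be the sequence listing \emph{all} occurrences of constancy atoms appearing in $\varphi^\ast$ (in some fixed order, say left-to-right). The lemma gives $\varphi^\ast \equiv \bigvvee_{f \in F} (\varphi^\ast)_f$, where $F$ is the set of instantiating functions over $\mathbf{d}$. By the definition of $\I(\varphi)$ as the set of full instantiations of $\varphi^\ast$, the family $\{(\varphi^\ast)_f \mid f \in F\}$ is exactly $\I(\varphi)$. Chaining the two equivalences yields $\varphi \equiv \varphi^\ast \equiv \bigvvee \I(\varphi)$, as required.

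There is essentially no obstacle here; the only minor subtleties are bookkeeping. First, one should note that the particular ordering chosen for the sequence $\mathbf{d}$ is immaterial because $\vvee$ is commutative and associative up to semantic equivalence (as is clear from its semantic clause). Second, one should check that $\I(\varphi)$ does indeed coincide with $\{(\varphi^\ast)_f \mid f \in F\}$ and not just with some subset: this is immediate from the definition, since $F$ ranges over \emph{all} instantiating functions, which by construction produces every possible way of simultaneously replacing each constancy atom $\depc{X_i}$ by an equality $X_i = x$ with $x \in \Ran(X_i)$. Finally, since $\varphi^\ast$ contains only constancy atoms as dependence atoms, every full instantiation of $\varphi^\ast$ is a $\CO$-formula, which confirms the remark preceding the corollary that $\I(\varphi)$ is a finite set of $\CO$-formulas.
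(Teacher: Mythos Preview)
Your proposal is correct and follows essentially the same approach as the paper: invoke Corollary \ref{cod_constancy_nf} together with soundness to get $\varphi \equiv \varphi^\ast$, then apply Lemma \ref{instantiation_form} to $\varphi^\ast$ with $\mathbf{d}$ listing all constancy-atom occurrences, and identify the resulting disjunction with $\bigvvee \I(\varphi)$. Your additional bookkeeping remarks (on the irrelevance of the ordering of $\mathbf{d}$ and on the identification of $\{(\varphi^\ast)_f \mid f \in F\}$ with $\I(\varphi)$) are sound elaborations of steps the paper leaves implicit.
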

\begin{proof}
By Corollary \ref{cod_constancy_nf} and the soundness of the rules, we have $\varphi\equiv\varphi^\ast$. Then by Lemma \ref{instantiation_form}, $\bigvvee\I(\varphi)\equiv \varphi^\ast\equiv\varphi$.
\end{proof}

As a further corollary, we have that each validity of $\COd$ is ``witnessed'' by a $\Co$ validity, one of its instatiations; or, more generally, we have the following analogous of the disjunction property:


\begin{coro}[Instantiation property]\label{instantiation_prop}
Let $\Delta$ be a set of \CO-formulas, and $\varphi$ a \COD-formula. Then $\Delta\models\varphi$ iff $\Delta\models\varphi_f^\ast$ for some full instantiation $\varphi_f^\ast\in \mathcal{I}(\varphi)$ of $\varphi^\ast$ via an instantiating function $f$.
\end{coro}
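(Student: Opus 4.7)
The plan is to reduce the statement to the disjunction property (Theorem \ref{splitting_prop}) via the normal form given by Corollary \ref{instantiation_nf_sem}. First, I would observe that $\I(\varphi)$ is a \emph{finite} set of $\CO$-formulas, since the signature $\sigma$ is finite and $\varphi^\ast$ contains only constancy atoms $\depc{X}$ (each with finitely many values to instantiate). Thus $\bigvvee \I(\varphi)$ is a well-formed $\COv$-formula, and by Corollary \ref{instantiation_nf_sem} we have $\varphi \equiv \bigvvee \I(\varphi)$. In particular, $\Delta \models \varphi$ iff $\Delta \models \bigvvee \I(\varphi)$.

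For the forward direction, suppose $\Delta \models \varphi$; then $\Delta \models \bigvvee \I(\varphi)$. Enumerate $\I(\varphi) = \{\psi_1, \dots, \psi_n\}$. Since $\Delta$ consists of $\CO$-formulas and each $\psi_i \in \CO$, a straightforward induction on $n$ using Theorem \ref{splitting_prop} yields the existence of some $i$ with $\Delta \models \psi_i$. Indeed, the base case $n=1$ is trivial; for the inductive step, we write $\bigvvee_{i\leq n+1}\psi_i = \psi_{n+1} \vvee \bigvvee_{i\leq n}\psi_i$ and apply Theorem \ref{splitting_prop} to obtain either $\Delta \models \psi_{n+1}$ (and we are done) or $\Delta \models \bigvvee_{i\leq n}\psi_i$ (and we apply the induction hypothesis). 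Since each $\psi_i$ is, by construction, a full instantiation $\varphi^\ast_f$ of $\varphi^\ast$ under some instantiating function $f$, this gives the required witness.

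For the converse direction, suppose $\Delta \models \varphi_f^\ast$ for some $f$ with $\varphi_f^\ast \in \I(\varphi)$. Then trivially $\Delta \models \bigvvee \I(\varphi)$, since $\varphi_f^\ast$ is one of the disjuncts and by the semantics of $\vvee$ any team satisfying $\varphi_f^\ast$ also satisfies the whole global disjunction. By Corollary \ref{instantiation_nf_sem} again, this gives $\Delta \models \varphi$.

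I do not anticipate any real obstacle: the whole argument is a clean application of the finite disjunction property, and the only care needed is in verifying that the finitude of $\I(\varphi)$ permits induction, and that every element of $\I(\varphi)$ is indeed a $\CO$-formula so that Theorem \ref{splitting_prop} applies. Both points are immediate from the construction of $\varphi^\ast$ (which eliminates all non-constancy dependence atoms via Corollary \ref{cod_constancy_nf}) and the definition of full instantiation.
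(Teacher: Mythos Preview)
Your proof is correct and follows exactly the approach indicated by the paper, which simply cites Corollary \ref{instantiation_nf_sem} and Theorem \ref{splitting_prop}; you have merely spelled out the routine induction on the size of $\I(\varphi)$ and the trivial converse direction. One minor remark: in applying Theorem \ref{splitting_prop} the hypothesis needed is only that $\Delta\subseteq\CO$, not that the disjuncts lie in $\CO$, so your emphasis on each $\psi_i\in\CO$ is harmless but unnecessary for that step.
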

\begin{proof}
By Corollary \ref{instantiation_nf_sem} and the disjunction property (Theorem \ref{splitting_prop}).
\end{proof}

To prove the completeness, we will make use of the disjunctive normal form $\varphi\equiv\bigvvee\I(\varphi)$. While the global disjunction $\vvee$ in the equivalence is not in the  
 vocabulary of $\COD$, we now show that in our system $\varphi$ behaves proof-theoretically as $\bigvvee\I(\varphi)$, in the sense that it proves the same consequences and it is derivable from the same assumptions. 

\begin{lm}\label{LEMTRACE1COD}\label{instantiation_2_form}
Let $\Gamma\cup\{\varphi,\psi\}$ be a set of $\COD$-formulas. Let  $\mathbf{d}=\langle  \depc{X_1},\dots,\depc{X_n}\rangle$ be a sequence of occurrences of constancy atoms in $\varphi$.
\begin{enumerate}
\item[(i)] For any instantiating function $f$ over $\mathbf{d}$, we have that $\varphi_{f}\vdash\varphi$.
\item[(ii)] If $\Gamma,\varphi_{f}\vdash\psi$ for all instantiating functions  $f$  over $\mathbf{d}$, then $\Gamma,\varphi\vdash\psi$.
\end{enumerate}
\end{lm}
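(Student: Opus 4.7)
The plan is to prove the two items by different routes, with (i) being essentially a monotonicity argument and (ii) being a straightforward induction on $n$ using the rule $\textsf{ConE}$.

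For (i), the key observation is that every occurrence of a constancy atom $\depc{X_i}$ in a $\COD$-formula $\varphi$ sits in a ``positive'' position in the sense required by Lemma \ref{monotonicity_derivations}: by our syntactic restriction, negation $\neg$ applies only to $\Co$-formulas, so constancy atoms never occur in the scope of $\neg$; moreover, the antecedents of $\cf$ are atomic equalities $\SET{X}=\SET{x}$, so constancy atoms never occur in counterfactual antecedents either. Since rule $\textsf{ConI}$ gives $X_i = f(i) \vdash \depc{X_i}$, Lemma \ref{monotonicity_derivations} allows us to replace each occurrence of the subformula $X_i = f(i)$ in $\varphi_f$ with $\depc{X_i}$. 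Performing these $n$ replacements one at a time yields a derivation $\varphi_f \vdash \varphi$.

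For (ii), I would proceed by induction on the length $n$ of $\mathbf{d}$. The base case $n=0$ is trivial because the only instantiating function is the empty one and $\varphi_f = \varphi$. For the inductive step, write $\mathbf{d} = \langle d_1, d_2, \dots, d_{n+1}\rangle$ where $d_1 = [\depc{X_1}, k_1]$. For each $a \in \Ran(X_1)$, let $\varphi^a := \varphi(X_1 = a / d_1)$; since replacing $d_1$ by a $\Co$-formula does not destroy the other constancy-atom occurrences, the remaining positions $d_2, \dots, d_{n+1}$ give rise to a sequence $\mathbf{d}'_a$ of $n$ occurrences in $\varphi^a$. Any instantiating function $g$ over $\mathbf{d}'_a$ extends to an instantiating function $f^a_g$ over $\mathbf{d}$ by setting $f^a_g(1) = a$, and one checks immediately that $(\varphi^a)_g = \varphi_{f^a_g}$. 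Hence the hypothesis $\Gamma, \varphi_f \vdash \psi$ (for every $f$) specialises, for each fixed $a$, to $\Gamma, (\varphi^a)_g \vdash \psi$ for every $g$; by the induction hypothesis applied to $\mathbf{d}'_a$, this yields $\Gamma, \varphi^a \vdash \psi$. Having a derivation of $\psi$ from $\Gamma, \varphi^a$ for each $a \in \Ran(X_1)$, the rule $\textsf{ConE}$ (applied with the major premise $\varphi$ at position $d_1$) concludes $\Gamma, \varphi \vdash \psi$.

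The main conceptual point — and the only place where one has to be careful — is the bookkeeping of ``occurrence indices'' under substitution: when we replace $d_1 = [\depc{X_1}, k_1]$ by the equality $X_1 = a$, we need to know that the occurrences $d_2, \dots, d_{n+1}$ still make sense in $\varphi^a$ and that their positions transform functorially, so that $(\varphi^a)_g$ coincides with $\varphi_{f^a_g}$. Because the substituted expression $X_1 = a$ contains no constancy atoms, the enumeration of the remaining constancy-atom occurrences of $\varphi^a$ is (up to a harmless re-indexing of the $k$'s) inherited from $\varphi$, so this bookkeeping is routine. Apart from this minor point, no essential obstacle arises, and the lemma will follow by direct application of $\textsf{ConI}$, Lemma \ref{monotonicity_derivations}, and $\textsf{ConE}$.
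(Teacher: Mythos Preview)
Your proof is correct and follows essentially the same route as the paper. For (i) the paper does exactly what you do: use $\textsf{ConI}$ to get $X_i=f(i)\vdash\depc{X_i}$ and then apply Lemma~\ref{monotonicity_derivations} repeatedly. For (ii) the paper presents the argument iteratively rather than as an explicit induction on $n$ (and happens to peel off the positions in the opposite order, restoring $\depc{X_1}$ first via $\textsf{ConE}$, then $\depc{X_2}$, etc.), but this is purely cosmetic; the content and the rules invoked are the same.
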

\begin{proof}
(i) By $\textsf{ConI}$, we have $X_i=f(i)\vdash \depc{X_i}$ for each $1\leq i\leq n$. Then we obtain $\varphi(X_1=f(1)/[\depc{X_1},k_1],\dots,X_n=f(n)/[\depc{X_n},k_n])\vdash \varphi$ by repeatedly applying Lemma \ref{monotonicity_derivations}.

(ii) Let $f_1$ be an arbitrary instantiating function over $\mathbf{d}$. For any arbitrary $x\in \Ran(X_1)$, let $g_x$ be the instantiating function over $\mathbf{d}$ such that $g_x(1)=x$ and $g_x(i)=f_1(i)$ for all $2\leq i\leq n$. By the assumption, we have then  $\Gamma,\varphi_{g_x}\vdash\psi$. Since we can obtain this clause for each $x\in \Ran(X_1)$, 
we conclude by $\textsf{ConE}$ that 
\begin{equation}\label{LEMTRACE1COD_eq1}
\Gamma,\varphi(X_2=f_1(2)/[\depc{X_2},k_2],\dots,X_n=f_1(n)/[\depc{X_n},k_n])\vdash\psi
\end{equation}
for an arbitrary instantiating function $f_1$ over $\mathbf{d}$. 
By the same argument, we can show, by using (\ref{LEMTRACE1COD_eq1}) and $\textsf{ConE}$, that 
\[\Gamma,\varphi(X_3=f_2(3)/[\depc{X_3},k_3],\dots,X_n=f_2(n)/[\depc{X_n},k_n])\vdash\psi\]
for arbitrary instantiating function $f_2$ over $\langle [\depc{X_2},k_2],\dots,[\depc{X_n},k_n]\rangle$. Proceeding in the same way, in the end we obtain that $\Gamma,\varphi\vdash\psi$, as required.
\end{proof}

Now, we can prove the completeness theorem by essentially the same argument as in the proof of Theorem \ref{TEOCOMPLCOU}.

\begin{teo}[Completeness]\label{TEOCOMPCODG}
For any set $\Gamma\cup\{\psi\}$ of $\COD$-formulas, we have that 
\(\Gamma\models^{g} \psi\iff \Gamma \vdash\psi .\)
\end{teo}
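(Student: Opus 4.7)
The soundness direction ``$\Leftarrow$'' is a routine inductive verification, with the key cases (soundness of $\mathsf{DepI}$, $\mathsf{DepE}$, $\mathsf{ConI}$ and $\mathsf{ConE}$) already discussed in the text just after Definition \ref{system_cod_g}. For the harder completeness direction ``$\Rightarrow$'' I will imitate, step by step, the proof of the corresponding $\COV$-completeness theorem (Theorem \ref{TEOCOMPLCOU}), replacing the r\^ole of the resolutions $\mathcal R(\varphi)$ and the disjunction property for $\vvee$ by the full instantiations $\mathcal I(\varphi)$ of Corollary \ref{instantiation_nf_sem} and the Instantiation property (Corollary \ref{instantiation_prop}). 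The indispensable proof-theoretic bridge is Lemma \ref{instantiation_2_form}, which says that $\varphi$ behaves \emph{syntactically} like the global disjunction $\bigvvee \mathcal I(\varphi)$ of its full instantiations, even though $\vvee$ is not in the language of $\COD$.

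\paragraph{Step 1: reduction to a single premise.}
Since $\sigma$ is finite, there are only finitely many non-equivalent generalized causal teams over $\sigma$, hence finitely many classes $\K_\gamma$ definable by $\COD$-formulas. Therefore, whenever $\Gamma\models^g\psi$, one can replace $\Gamma$ by a finite subset $\Gamma_0\subseteq \Gamma$ with $\K_{\bigwedge\Gamma_0}=\K_\Gamma$, and it suffices to prove the statement for $\Gamma=\{\varphi\}$ with $\varphi=\bigwedge\Gamma_0$; once $\varphi\vdash\psi$ is obtained, repeated applications of $\wedge\mathsf{E}$ and $\wedge\mathsf{I}$ yield $\Gamma\vdash\psi$.

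\paragraph{Step 2: passing to $\CO$-instantiations.}
Assume $\varphi\models^g\psi$. By Corollary \ref{instantiation_nf_sem} we have $\varphi\equiv\bigvvee\mathcal I(\varphi)$ and $\psi\equiv\bigvvee\mathcal I(\psi)$, where $\mathcal I(\varphi),\mathcal I(\psi)$ are finite sets of $\CO$-formulas. For an arbitrary $\gamma\in\mathcal I(\varphi)$ we then obtain $\gamma\models^g\bigvvee\mathcal I(\psi)$, and the disjunction property (Theorem \ref{splitting_prop}), applied to the $\CO$-formula $\gamma$ on the left, produces a single $\alpha_\gamma\in \mathcal I(\psi)$ with $\gamma\models^g\alpha_\gamma$. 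Since $\gamma$ and $\alpha_\gamma$ are both $\CO$-formulas, the completeness of $\CO$ (Theorem \ref{completeness_co}) gives $\gamma\vdash\alpha_\gamma$ already in the $\CO$-fragment of our calculus, and in particular in the system for $\COD$.

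\paragraph{Step 3: assembling $\varphi\vdash\psi$.}
By Lemma \ref{instantiation_2_form}(i) applied to $\psi^\ast$ and the full instantiation $\alpha_\gamma=\psi^\ast_f$, we have $\alpha_\gamma\vdash\psi^\ast$, whence $\alpha_\gamma\vdash\psi$ by Corollary \ref{cod_constancy_nf}. Chaining this with $\gamma\vdash\alpha_\gamma$ from the previous step yields $\gamma\vdash\psi$ for every $\gamma\in\mathcal I(\varphi)$; that is, $\varphi^\ast_f\vdash\psi$ for every full instantiation $\varphi^\ast_f$ of $\varphi^\ast$. Lemma \ref{instantiation_2_form}(ii), applied to all instantiating functions over the sequence of all constancy atoms of $\varphi^\ast$, then gives $\varphi^\ast\vdash\psi$, and Corollary \ref{cod_constancy_nf} finally delivers $\varphi\vdash\psi$.

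\paragraph{Expected main obstacle.}
The genuinely new element compared with the $\COV$ case is the absence of $\vvee$ in the object language, so one cannot literally mimic the proof of Theorem \ref{TEOCOMPLCOU}. The key technical point is therefore to make sure that the syntactic analogues of $\vvee\mathsf{I}$ and $\vvee\mathsf{E}$ for the finite ``disjunction'' over full instantiations are indeed available in the calculus; this is exactly what Lemma \ref{instantiation_2_form} provides, so once that lemma is in place the completeness argument itself is a direct adaptation of the $\COV$ proof. The only care needed is in the bookkeeping of the translation $\varphi\mapsto\varphi^\ast$ (Corollary \ref{cod_constancy_nf}), which must be invoked both on the hypothesis side and on the conclusion side so that Lemma \ref{instantiation_2_form} can legitimately be applied.
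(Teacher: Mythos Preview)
Your proof is correct and follows essentially the same approach as the paper's: reduce to a single premise, pass to $\CO$-instantiations via Corollary \ref{instantiation_nf_sem}, apply the disjunction property and $\CO$-completeness to each instantiation, then reassemble using Lemma \ref{instantiation_2_form}. Your version is in fact slightly more explicit than the paper's in tracking the passage through $\varphi^\ast$ and $\psi^\ast$ via Corollary \ref{cod_constancy_nf}, which the paper leaves implicit when it writes $\beta\vdash\psi$ and $\Gamma\vdash\psi$ directly.
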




\begin{proof}
 We prove the direction ``$\Longrightarrow$". 
As in Theorem \ref{TEOCOMPLCOU}, it suffices to prove it for $\Gamma=\{\varphi\}$.
Suppose $\varphi\models \psi$. By Corollary \ref{instantiation_nf_sem}, we have that
\(
\bigvvee\I(\varphi)  \models \bigvvee\I(\psi).
\)
It then follows that for every $\gamma\in\I(\varphi)$, we have that
\(\gamma \models \bigvvee\I(\psi).
\)
Since each $\gamma$ is a $\CO$-formula, by Theorem \ref{splitting_prop}, there is a \CO-formula $\beta\in \I(\psi)$ such that
\(
\gamma \models \beta.
\)
Now, by applying Theorem \ref{completeness_co} and Lemma \ref{instantiation_2_form}(i), we obtain   that
\(
\gamma \vdash \beta\vdash \psi
\)
for all $\gamma\in \I(\varphi)$, which implies  by Lemma \ref{LEMTRACE1COD}(ii) that $\Gamma \vdash \psi$.
\end{proof}


\subsection{Axiomatizing $\COv$ and $\COd$  over causal teams}\label{AXCOCT}

In this section, we axiomatize $\COv$ and $\COd$  over causal teams. 
It can be verified that all the rules in the systems of the two logics over generalized causal teams are also sound over causal teams; in the case of rules 
 without discharging of assumptions, this immediately follows from the fact that causal teams can be regarded as generalized causal teams with uniform function components. To obtain complete systems over causal teams, we shall then add new axioms or rules to characterize the property of having uniform function components (and thus being ``indistinguishable'' from a causal team). Recall from Corollary \ref{LEMCHARCT} that the property is defined by the formula $\bigvvee_{\mathcal F \in \FUN}\Phi^\mathcal F$, which is, however, not a formula of \COd. To give a unified axiomatization for both \COv~ and \COd, we will instead extend the corresponding system with a new rule. Let us now present our systems. 


\begin{df}
The system for $\COV$ over causal teams consists of all rules of $\COV$ over generalized causal teams (Definition \ref{COV_system_df}) plus the following rule:
\begin{center}{\normalfont
\begin{tabular}{|C{0.96\linewidth}|}
\hline
\AxiomC{}\noLine\UnaryInfC{$\forall \F\in \FUN$}\noLine\UnaryInfC{[$\Phi^\F$]}\noLine\UnaryInfC{$\vdots$} \noLine\UnaryInfC{$ 
\psi$}\RightLabel{\textsf{FunE}}\UnaryInfC{$\psi$}\noLine\UnaryInfC{}\DisplayProof \\\hline
\end{tabular}
}
\end{center}

Similarly, the system for $\COD$ over causal teams consists of all rules of $\COD$ over generalized causal teams (Definition \ref{system_cod_g}) plus the above rule \textsf{FunE}.
\end{df}

The new rule \textsf{FunE} states that if $\psi$ can be derived from $\Phi^\F$ for every $\F\in \FUN$, then $\psi$ can be derived. The soundness of this rule  follows immediately from Corollary \ref{LEM_unf_ct}.
We write $\vdash^c$ for the entailment relation associated with the system of $\COV$ or $\COD$ over  causal teams (which system is meant will always be clear from the context). Obviously, $\vdash^c$ is an extension of the entailment relation $\vdash^g$ for the corresponding system over generalized causal teams.

\begin{lm}\label{FunE_lm}
For any set $\Gamma\cup\{\psi\}$ of $\COV$-formulas (or $\COD$-formulas), we have that   
\[\Gamma,\bigvvee_{\F\in \FUN}\Phi^\F\models^{g}\psi \iff \Gamma,\Phi^\F\models^{g}\psi\text{ for all }\F\in\FUN\iff \Gamma\vdash^{c}\psi.\]
\end{lm}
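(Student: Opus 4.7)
The statement bundles three assertions: the first equivalence is purely semantic (over generalized causal teams), the second one bridges $\models^g$ with the causal-team derivability $\vdash^c$. The natural plan is to handle them separately, exploiting the fact that $\vdash^c$ extends $\vdash^g$ by just the rule $\textsf{FunE}$.

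For the first equivalence I would read it straight off the semantic clause for $\vvee$: a (generalized) causal team $T$ satisfies $\bigvvee_{\F\in\FUN}\Phi^\F$ iff there is some particular $\F\in\FUN$ with $T\models \Phi^\F$. Consequently, the class of teams satisfying $\Gamma\cup\{\bigvvee_\F\Phi^\F\}$ is exactly $\bigcup_{\F\in\FUN}\{T\mid T\models \Gamma\cup\{\Phi^\F\}\}$, and the equivalence $\Gamma,\bigvvee_\F\Phi^\F\models^g\psi\iff \forall \F\,(\Gamma,\Phi^\F\models^g\psi)$ is immediate. Note that no disjunction-property argument is needed here, because this is a statement about a fixed team, not an abstract entailment.

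For the forward direction of the second equivalence, I would invoke the completeness of $\vdash^g$ (Theorem \ref{TEOCOMPLCOU} for $\COV$, Theorem \ref{TEOCOMPCODG} for $\COD$): from $\Gamma,\Phi^\F\models^g\psi$ for each $\F\in\FUN$ we obtain $\Gamma,\Phi^\F\vdash^g\psi$, and since the system $\vdash^c$ contains all rules of $\vdash^g$, these derivations lift to $\vdash^c$. One application of $\textsf{FunE}$, which discharges $\Phi^\F$ in each of the finitely many branches, one for each $\F\in\FUN$, yields $\Gamma\vdash^c\psi$.

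For the backward direction I would go through causal-team semantics. First I check that $\vdash^c$ is sound with respect to $\models^c$: all rules inherited from $\vdash^g$ are sound on causal teams by identifying $T$ with the uniform generalized team $T^g$ and invoking Lemma \ref{LEMIDENTIFY}; the new rule $\textsf{FunE}$ is sound because every causal team satisfies $\bigvvee_{\F\in\FUN}\Phi^\F$ by Corollary \ref{LEM_unf_ct}, so by the semantic clause for $\vvee$ some particular $\Phi^\F$ already holds in $T$, reducing the goal to one of the hypothesised implications $\Gamma,\Phi^\F\models^c\psi$. Thus $\Gamma\vdash^c\psi$ entails $\Gamma\models^c\psi$. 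Lemma \ref{LEMCTMODELS} then translates this to $\Gamma,\bigvvee_\F\Phi^\F\models^g\psi$, and the first equivalence delivers $\Gamma,\Phi^\F\models^g\psi$ for every $\F$. The whole argument is essentially a repackaging of previously established facts; the only point that genuinely needs verification is the soundness of $\textsf{FunE}$ over causal teams, and this is just the one-line observation above. I do not foresee any substantial technical obstacle.
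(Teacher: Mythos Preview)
Your proposal is correct and follows essentially the same route as the paper: the first equivalence is dispatched directly from the semantics of $\vvee$, the forward direction of the second uses $\vdash^g$-completeness plus one application of \textsf{FunE}, and the backward direction goes via soundness of $\vdash^c$ over causal teams and then Lemma~\ref{LEMCTMODELS}. You have in fact spelled out a bit more than the paper does (the explicit verification of the first equivalence and of the soundness of \textsf{FunE}), but the argument is the same.
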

\begin{proof}
The first ``$\iff$" is easy to verify. For the second ``$\iff$", suppose first that $\Gamma,\Phi^\F\models^{g}\psi$ for all $\F\in\FUN$. Then we have $\Gamma,\Phi^\F\vdash^{g}\psi$ for all $\F\in\FUN$ by the completeness theorem of the \COV-system (or \COD-system) over generalized causal teams. This then gives that $\Gamma,\Phi^\F\vdash^{c}\psi$ for all $\F\in\FUN$. Finally we derive $\Gamma\vdash^c\psi$ by  applying the rule $\textsf{FunE}$. Conversely, if $\Gamma\vdash^{c}\psi$, then we have $\Gamma\models^{c}\psi$ by the soundness theorem. But then we conclude by Lemma \ref{LEMCTMODELS} that $\Gamma,\bigvvee_{\F\in \FUN}\Phi^\F\models^{g}\psi$, which completes the proof.
\end{proof}

\begin{teo}[Completeness]For any set $\Gamma\cup\{\psi\}$ of $\COV$-formulas (or \COD-formulas), we have that  
\(\Gamma\models^{c}\psi \iff \Gamma\vdash^{c}\psi.\)
\end{teo}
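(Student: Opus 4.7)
The plan is to prove the theorem by directly chaining together the two key lemmas already established, namely Lemma~\ref{LEMCTMODELS} (which translates causal team consequence into generalized causal team consequence modulo the uniformity formula $\bigvvee_{\F\in \FUN}\Phi^\F$) and Lemma~\ref{FunE_lm} (which in turn equates this generalized consequence relation with derivability in the extended system via the rule \textsf{FunE}). Since both lemmas apply uniformly to $\COV$ and $\COD$, the argument will be identical for the two logics.

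For soundness, I would verify that each rule of the extended system preserves validity over (recursive) causal teams. The rules inherited from the systems over generalized causal teams remain sound when restricted to causal teams, because, via Lemma~\ref{LEMIDENTIFY}, every causal team $T$ can be identified with the uniform generalized causal team $T^g$, and validity over all generalized causal teams entails validity over all uniform ones. The only new rule, \textsf{FunE}, is sound by Corollary~\ref{LEM_unf_ct}: since $\models^c \bigvvee_{\F\in \FUN}\Phi^\F$, every causal team satisfies some $\Phi^\F$, so any $\psi$ derivable (in the causal sense) from each $\Phi^\F$ under assumptions $\Gamma$ is already a causal-team consequence of $\Gamma$ alone.

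For completeness, suppose $\Gamma \models^c \psi$. By Lemma~\ref{LEMCTMODELS} this is equivalent to
\[
\Gamma,\ \bigvvee_{\F\in \FUN}\Phi^\F \ \models^g\ \psi.
\]
Invoking Lemma~\ref{FunE_lm}, which expresses exactly the chain of equivalences
\[
\Gamma,\ \bigvvee_{\F\in \FUN}\Phi^\F \ \models^g\ \psi \ \iff\ \Gamma,\Phi^\F \models^g \psi \text{ for all }\F\in \FUN \ \iff\ \Gamma \vdash^c \psi,
\]
we immediately conclude $\Gamma \vdash^c \psi$, as required.

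Since all of the substantive work has already been done in earlier sections, there is no real obstacle here; the only delicate point is making sure that Lemma~\ref{LEMCTMODELS} has indeed been proved at the required generality, i.e.\ for formulas of $\COV$ and of $\COD$, not just of $\CO$. Provided this generality is in place (as stated), the completeness theorem drops out as an immediate corollary of the two lemmas, illustrating the strategy announced in the introduction: first axiomatize the generalized semantics, then internalize the uniformity property via the rule \textsf{FunE} to recover the completeness of the calculus over (recursive) causal teams.
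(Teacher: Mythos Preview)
Your proof is correct and follows exactly the same route as the paper: apply Lemma~\ref{LEMCTMODELS} to pass from $\Gamma\models^c\psi$ to $\Gamma,\bigvvee_{\F\in\FUN}\Phi^\F\models^g\psi$, then invoke Lemma~\ref{FunE_lm} to conclude $\Gamma\vdash^c\psi$. Your additional remarks on soundness and on the required generality of Lemma~\ref{LEMCTMODELS} are accurate and simply make explicit what the paper leaves implicit.
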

\begin{proof}
It suffices to show the direction ``$\Longrightarrow$".
Suppose $\Gamma\models^{c}\psi$. By Lemma \ref{LEMCTMODELS}, we have that $\Gamma,\bigvvee_{\mathcal F\in \FUN} \Phi^{\mathcal F}\models^{g} \psi$, which then gives $\Gamma\vdash^c \psi$ by Lemma \ref{FunE_lm}. 
\end{proof}

Now that we have axiomatized the two logics $\COv$ and $\COd$ over causal teams, let us devote the rest of the section to alternative complete systems for these logics, or equivalently, alternative ways to axiomatize the property of a generalized causal team having a uniform function component.

As mentioned already, such a property can be axiomatized in \COV~ using the following axiom instead of the rule \textsf{FunE}:
{\normalfont
\begin{center}
\begin{tabular}{|C{0.96\linewidth}|}
\hline
\AxiomC{}\noLine\UnaryInfC{}\noLine\UnaryInfC{} \RightLabel{$\textsf{Unf}_{\footnotesize\vvee}$}\UnaryInfC{$\bigvvee_{\mathcal F\in \FUN} \Phi^{\mathcal F}$}\noLine\UnaryInfC{}\DisplayProof\\\hline
\end{tabular}
\end{center}}
\noindent It is straightforward to show that the rule \textsf{FunE} and the above axiom $\textsf{Unf}_{\footnotesize\vvee}$ are inter-derivable from the rules $\vvee\textsf{I}$ and  $\vvee\textsf{E}$. Therefore, the system consisting of all rules of $\COV$ over generalized causal teams together with the axiom $\textsf{Unf}_{\footnotesize\vvee}$ is also sound and complete for \COV~ over causal teams.

We now turn to the logic \COd. Consider the following axiom schema: For $s,t\in \ASS$  and $\F,\G\in \FUN$,
\begin{center}
\begin{tabular}{|C{0.96\linewidth}|}
\hline
\AxiomC{}\noLine\UnaryInfC{}\noLine\UnaryInfC{} \RightLabel{$\textsf{Unf}_{\textsf{D}}$}\UnaryInfC{$ \Xi^{\{(s,\F),(t,\G)\}}$}\noLine\UnaryInfC{}\DisplayProof \\
{\footnotesize where $\F\not\sim\G$\quad\quad\quad\quad}
\\\hline
\end{tabular}
\end{center}
We now show that the above axiom schema $\textsf{Unf}_{\textsf{D}}$ also defines uniformity of generalized causal teams.

\begin{lm}
For any generalized causal team $T$ over some signature $\sigma$,  
\[T\models \Xi^{\{(s,\F),(t,\G)\}}\text{ for all }s,t\in \ASS\text{ and }\F,\G\in \FUN\text{ with }\F\not\sim\G \iff T\text{ is uniform}.\]
\end{lm}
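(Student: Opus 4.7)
The plan is to reduce the statement to an easy reformulation via Lemma \ref{LEMNOTSUB}, then argue directly about the structure of $\preccurlyeq$-witnesses. Concretely, for each pair $(s,\F),(t,\G)\in \SEM$ with $\F\not\sim\G$, the generalized causal team $U := \{(s,\F),(t,\G)\}$ is nonempty and has weak cardinality $|U/_\approx|=2$, so $\Xi^U$ is a well-defined $\COD$-formula. By Lemma \ref{LEMNOTSUB}, $T\models \Xi^{U} \iff U\not\preccurlyeq T$. Thus the whole claim reduces to showing: $T$ is uniform iff, for every such pair with $\F\not\sim\G$, $\{(s,\F),(t,\G)\}\not\preccurlyeq T$.

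For the easy direction ($\Rightarrow$), I would argue by contrapositive. Suppose $T$ is not uniform, so there exist $(s,\F),(t,\G)\in T$ with $\F\not\sim\G$. Then $\{(s,\F),(t,\G)\}\subseteq T$, so $\{(s,\F),(t,\G)\}\preccurlyeq T$ (taking $R = \{(s,\F),(t,\G)\}$ itself in the definition), which by Lemma \ref{LEMNOTSUB} gives $T\not\models\Xi^{\{(s,\F),(t,\G)\}}$.

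For the other direction ($\Leftarrow$), I would also proceed contrapositively. Suppose $\{(s,\F),(t,\G)\}\preccurlyeq T$ for some pair with $\F\not\sim\G$. Then there is $R\subseteq T$ with $\{(s,\F),(t,\G)\}\approx R$. Because $\F\not\sim\G$, the $\approx$-classes of $(s,\F)$ and $(t,\G)$ are distinct, so splitting $\{(s,\F),(t,\G)\}$ by $\sim$-classes of function components yields $\{(s,\F),(t,\G)\}^\F=\{(s,\F)\}$ and $\{(s,\F),(t,\G)\}^\G=\{(t,\G)\}$. The relation $\approx$ requires agreement, for every $\mathcal H\in \FUN$, on the team component of the $\mathcal H$-slice; hence $(R^\F)^-=\{s\}$ and $(R^\G)^-=\{t\}$. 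This forces $R$ to contain some $(s,\F')$ with $\F'\sim\F$ and some $(t,\G')$ with $\G'\sim\G$, and since $\sim$ is transitive together with $\F\not\sim\G$ we get $\F'\not\sim\G'$. As $R\subseteq T$, both witnesses lie in $T$, so $T$ is not uniform.

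The argument is essentially routine given Lemma \ref{LEMNOTSUB} and the properties of $\approx$ and $\preccurlyeq$ established in Section \ref{SECCHARFUN} and Lemma \ref{preccurlyeq_prop}. The only step that requires a little care is the unpacking of $\{(s,\F),(t,\G)\}\approx R$ into the statement that $R$ contains representatives in both $\sim$-classes; this is where one must use that $\F\not\sim\G$ ensures the two $\sim$-slices of $U$ are disjoint singletons with distinct team components, so that $R$ cannot ``collapse'' them. Once this is in place, the equivalence follows without further machinery.
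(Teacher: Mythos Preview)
Your argument is correct and takes essentially the same route as the paper: both reduce via Lemma~\ref{LEMNOTSUB} to showing that $T$ is uniform iff no two-element team $\{(s,\F),(t,\G)\}$ with $\F\not\sim\G$ satisfies $\{(s,\F),(t,\G)\}\preccurlyeq T$, and then argue each implication by locating or ruling out witnesses inside $T$. One small slip: your direction labels are swapped (assuming ``$T$ not uniform'' is the contrapositive of $\Leftarrow$, not of $\Rightarrow$, in your own phrasing of the biconditional), but since you establish both implications this does not affect correctness.
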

\begin{proof}
Suppose $T$ is uniform. Then $T=T^{\mathcal{H}}$ for some $\mathcal{H}\in \FUN$. For any $\F,\G\in \FUN$ with $\F\not\sim\G$, it must be that either $\F\not\sim\mathcal{H}$ or $\G\not\sim\mathcal{H}$. Thus, for any $s,t\in \ASS$, we have $\{(s,\F),(t,\G)\}\not\preccurlyeq T$, which by Lemma \ref{LEMNOTSUB} implies that $T\models \Xi^{\{(s,\F),(t,\G)\}}$. 

Conversely, suppose $T$ is not uniform. Then, there  exist $(s,\F),(t,\G)\in T$ such that $\F\not\sim\G$. Clearly $\{(s,\F),(t,\G)\}\preccurlyeq T$. But this means, by Lemma \ref{LEMNOTSUB}, that $T\not\models \Xi^{\{(s,\F),(t,\G)\}}$. 
\end{proof}

The above lemma implies (by Corollary \ref{LEMCHARCT}) that the axiom schema $\textsf{Unf}_{\textsf{D}}$ is equivalent to the axiom $\textsf{Unf}_{\footnotesize \vvee}$, and thus to the rule \textsf{FunE}, given  $\vvee\textsf{I}$ and  $\vvee\textsf{E}$. From these it follows that the system consisting of all rules of $\COD$ over generalized causal teams together with the axiom  schema $\textsf{Unf}_{\textsf{D}}$ is sound and complete for \COD~ over causal teams.

\section{Conclusion}\label{sec:conclusion}

\todob{Say something about extending results with disjunctive antecedents, such as in Briggs or Santorio.}

We have answered the main questions concerning the expressive power and the existence of complete deduction calculi for the languages that were proposed in \cite{BarSan2018} and \cite{BarSan2019}, and which involve both interventionist counterfactuals and (contingent) dependencies. 
We have introduced a generalized causal team semantics that extends the causal discourse to cases in which there may be uncertainty about the causal laws; and we have analyzed the properties of the languages from \cite{BarSan2018,BarSan2019} also when they are evaluated in this generalized semantics.
We believe the present work may provide guidelines for the investigation of further notions of dependence and causation in causal team semantics and its variants.

Our work shows that many of the methodologies developed in the literature on team semantics can be adapted to the (generalized) causal team semantics. 
At the same time, a number of peculiarities emerged that set apart these semantic frameworks from the usual team semantics. 
 An interesting anomaly (as noted in \cite{BarSan2019}) is the existence of formulas whose interpretation only depends on the causal structure; this is the case for the  ``direct cause'' formula mentioned in the introduction (call it $\beta_{\mathrm{DC}}(X,V)$), or for the following formula:
 \[
 \beta_{\mathrm{En}}(V):=\bigvee_{X\in\SET W_V} \beta_{\mathrm{DC}}(X,V),
 \]
 which can be shown to characterize the property of $V$ being endogenous (via a non-constant function).
 One unusual aspect of such formulas is that not only the class of causal teams they define is downward closed, but also its complement  (taken together with the empty team) is. 
 Some more peculiarities have emerged that only concern the original causal team semantics: first, the failure of the disjunction property for $\vvee$, and of analogous instantiation properties 
  for the dependence atoms; and secondly, the collapse of $\vvee$ (global disjunction) to $\lor$ (tensor disjunction) for some special classes of disjuncts. This kind of ``collapse'' is possible in pure team semantics (and over generalized causal teams) only if one of the disjuncts entails the other, which is however not the case in some of the examples we isolated. 

 We point out that our calculi are sound only for \emph{recursive} systems, i.e., when the causal graph is acyclic. The general case (and special cases such as the ``Lewisian'' systems considered in \cite{Zha2013}) will require a separate study. We point out, however, that each of our deduction systems can be adapted to the case of \emph{unique-solution} (possibly generalized) causal teams by replacing the \textsf{Recur} rule with an inference rule that expresses the \emph{Reversibility} axiom from \cite{GalPea1998}:
\[
\Large \frac{(\SET{X}=\SET{x} \land Y=y) \cf W=w \hspace{25pt} (\SET{X}=\SET{x} \land W=w)\cf Y=y}{\SET{X}=\SET{x} \cf Y=y}\normalsize \hspace{5pt} \text{ (for $Y\neq W$) }.
\]
See \cite{BarSan2019} for the details on how the semantics should be adapted for this case. The analysis of causal languages beyond the unique-solution case leads naturally to the consideration of operators (such as the \emph{might}-counterfactuals) that do not preserve downward closure. It is difficult to guess, at this stage, to what extent the methodologies of team semantics can be adapted to the study of causal languages that are not downward closed. When this issue is solved, it might be interesting to investigate languages that also include non-downward closed  contingent (in)dependencies, such as the \emph{independence atoms} and \emph{inclusion} atoms considered in the literature on pure team semantics (\cite{GraVaa2013,Gal2012}). \corrb{Finally, it would be interesting to consider counterfactuals with more complex antecedents -- e.g., antecedents containing disjunctions or negations. There have been a number of (often conflicting) proposals in the literature for the semantics of such interventionist counterfactuals \cite{Bri2012,Alo2009,CiaZhaCha2018,Sch2018,San2019}; it is our hope that causal team semantics can shed some light on this issue. For example, a natural interpretation of a counterfactual $(X=1\lor X=2)\cf \psi$ is that $\psi$ certainly holds, even if we are not sure which of the interventions $do(X=1)$ or $do(X=2)$ has been performed. This uncertainty may be modeled by taking the union of the two (generalized) causal teams resulting from applying each intervention.}


\section*{Acknowledgments}
We would like to thank Johan van Benthem for an interesting discussion with the first author, which inspired us to further generalize causal team semantics towards the direction taken in this paper. We also thank two anonymous referees for their helpful comments.


The first author was supported by grant 316460 of the Academy of Finland. 
The second author was supported by grant 330525 of the Academy of Finland, and Research Funds of the University of Helsinki.


\section*{References}

\bibliographystyle{elsarticle-harv}


\begin{thebibliography}{32}
\expandafter\ifx\csname natexlab\endcsname\relax\def\natexlab#1{#1}\fi
\providecommand{\url}[1]{\texttt{#1}}
\providecommand{\href}[2]{#2}
\providecommand{\path}[1]{#1}
\providecommand{\DOIprefix}{doi:}
\providecommand{\ArXivprefix}{arXiv:}
\providecommand{\URLprefix}{URL: }
\providecommand{\Pubmedprefix}{pmid:}
\providecommand{\doi}[1]{\href{http://dx.doi.org/#1}{\path{#1}}}
\providecommand{\Pubmed}[1]{\href{pmid:#1}{\path{#1}}}
\providecommand{\bibinfo}[2]{#2}
\ifx\xfnm\relax \def\xfnm[#1]{\unskip,\space#1}\fi
\bibitem[{Alonso-Ovalle(2009)}]{Alo2009}
\bibinfo{author}{Alonso-Ovalle, L.}, \bibinfo{year}{2009}.
\newblock \bibinfo{title}{Counterfactuals, correlatives, and disjunction}.
\newblock \bibinfo{journal}{Linguistics and Philosophy} \bibinfo{volume}{32},
  \bibinfo{pages}{207--244}.
\newblock \URLprefix \url{https://doi.org/10.1007/s10988-009-9059-0},
  \DOIprefix\doi{10.1007/s10988-009-9059-0}.
\bibitem[{Barbero and Galliani(2020)}]{BarGal2020}
\bibinfo{author}{Barbero, F.}, \bibinfo{author}{Galliani, P.},
  \bibinfo{year}{2020}.
\newblock \bibinfo{title}{Embedding causal team languages into dependence
  logic}, in: \bibinfo{booktitle}{Workshop on Logics of Dependence and
  Independence (LoDE 2020V)}, p.~\bibinfo{pages}{4}.
\bibitem[{Barbero and Sandu(2018)}]{BarSan2018}
\bibinfo{author}{Barbero, F.}, \bibinfo{author}{Sandu, G.},
  \bibinfo{year}{2018}.
\newblock \bibinfo{title}{Team semantics for interventionist counterfactuals
  and causal dependence}, in: \bibinfo{booktitle}{Proceedings 3rd Workshop on
  formal reasoning about Causation, Responsibility, and Explanations in Science
  and Technology, CREST@ETAPS 2018, Thessaloniki, Greece, 21st April 2018}, pp.
  \bibinfo{pages}{16--30}.
\newblock \DOIprefix\doi{10.4204/EPTCS.286.2}.
\bibitem[{Barbero and Sandu(2021)}]{BarSan2019}
\bibinfo{author}{Barbero, F.}, \bibinfo{author}{Sandu, G.},
  \bibinfo{year}{2021}.
\newblock \bibinfo{title}{Team semantics for interventionist counterfactuals:
  observations vs. interventions}.
\newblock \bibinfo{journal}{Journal of Philosophical Logic} ,
  \bibinfo{pages}{1--51}.
\bibitem[{Barbero et~al.(2020)Barbero, Schulz, Smets, Vel{\'a}zquez-Quesada and
  Xie}]{BarSchSmeVelXie2020}
\bibinfo{author}{Barbero, F.}, \bibinfo{author}{Schulz, K.},
  \bibinfo{author}{Smets, S.}, \bibinfo{author}{Vel{\'a}zquez-Quesada, F.R.},
  \bibinfo{author}{Xie, K.}, \bibinfo{year}{2020}.
\newblock \bibinfo{title}{Thinking about causation: A causal language with
  epistemic operators}, in: \bibinfo{editor}{Martins, M.A.},
  \bibinfo{editor}{Sedl{\'a}r, I.} (Eds.), \bibinfo{booktitle}{Dynamic Logic.
  New Trends and Applications}, \bibinfo{publisher}{Springer International
  Publishing}, \bibinfo{address}{Cham}. pp. \bibinfo{pages}{17--32}.
\bibitem[{Barbero and Yang(2020)}]{BarYan2020}
\bibinfo{author}{Barbero, F.}, \bibinfo{author}{Yang, F.},
  \bibinfo{year}{2020}.
\newblock \bibinfo{title}{Counterfactuals and dependencies on causal teams:
  expressive power and deduction systems}, in: \bibinfo{editor}{Olivetti, N.},
  \bibinfo{editor}{Verbrugge, R.}, \bibinfo{editor}{Negri, S.},
  \bibinfo{editor}{Sandu, G.} (Eds.), \bibinfo{booktitle}{Advances in Modal
  Logic}, \bibinfo{publisher}{College publications}, \bibinfo{address}{United
  Kingdom}. pp. \bibinfo{pages}{73--93}.
\newblock \URLprefix
  \url{https://www.helsinki.fi/en/conferences/advances-in-modal-logic-2020}.
\bibitem[{Briggs(2012)}]{Bri2012}
\bibinfo{author}{Briggs, R.}, \bibinfo{year}{2012}.
\newblock \bibinfo{title}{Interventionist counterfactuals}.
\newblock \bibinfo{journal}{Philosophical Studies: An International Journal for
  Philosophy in the Analytic Tradition} \bibinfo{volume}{160},
  \bibinfo{pages}{139--166}.
\bibitem[{Ciardelli and Barbero(2019)}]{CiaBar2019}
\bibinfo{author}{Ciardelli, I.}, \bibinfo{author}{Barbero, F.},
  \bibinfo{year}{2019}.
\newblock \bibinfo{title}{Undefinability in inquisitive logic with tensor}, in:
  \bibinfo{editor}{Blackburn, P.}, \bibinfo{editor}{Lorini, E.},
  \bibinfo{editor}{Guo, M.} (Eds.), \bibinfo{booktitle}{Logic, Rationality, and
  Interaction}, \bibinfo{publisher}{Springer Berlin Heidelberg},
  \bibinfo{address}{Berlin, Heidelberg}. pp. \bibinfo{pages}{29--42}.
\bibitem[{Ciardelli and Roelofsen(2011)}]{CiaRoe2011}
\bibinfo{author}{Ciardelli, I.}, \bibinfo{author}{Roelofsen, F.},
  \bibinfo{year}{2011}.
\newblock \bibinfo{title}{Inquisitive logic}.
\newblock \bibinfo{journal}{Journal of Philosophical Logic}
  \bibinfo{volume}{40}, \bibinfo{pages}{55--94}.
\newblock \URLprefix \url{https://doi.org/10.1007/s10992-010-9142-6},
  \DOIprefix\doi{10.1007/s10992-010-9142-6}.
\bibitem[{Ciardelli et~al.(2018)Ciardelli, Zhang and
  Champollion}]{CiaZhaCha2018}
\bibinfo{author}{Ciardelli, I.}, \bibinfo{author}{Zhang, L.},
  \bibinfo{author}{Champollion, L.}, \bibinfo{year}{2018}.
\newblock \bibinfo{title}{Two switches in the theory of counterfactuals}.
\newblock \bibinfo{journal}{Linguistics and Philosophy} \bibinfo{volume}{41},
  \bibinfo{pages}{577--621}.
\newblock \URLprefix \url{https://doi.org/10.1007/s10988-018-9232-4},
  \DOIprefix\doi{10.1007/s10988-018-9232-4}.
\bibitem[{Engstr{\"o}m(2012)}]{Eng2012}
\bibinfo{author}{Engstr{\"o}m, F.}, \bibinfo{year}{2012}.
\newblock \bibinfo{title}{Generalized quantifiers in {D}ependence {L}ogic}.
\newblock \bibinfo{journal}{Journal of Logic, Language and Information}
  \bibinfo{volume}{21}, \bibinfo{pages}{299--324}.
\bibitem[{Galles and Pearl(1998)}]{GalPea1998}
\bibinfo{author}{Galles, D.}, \bibinfo{author}{Pearl, J.},
  \bibinfo{year}{1998}.
\newblock \bibinfo{title}{An axiomatic characterization of causal
  counterfactuals}.
\newblock \bibinfo{journal}{Foundations of Science} \bibinfo{volume}{3},
  \bibinfo{pages}{151--182}.
\bibitem[{Galliani(2012)}]{Gal2012}
\bibinfo{author}{Galliani, P.}, \bibinfo{year}{2012}.
\newblock \bibinfo{title}{Inclusion and exclusion dependencies in team
  semantics - on some logics of imperfect information}.
\newblock \bibinfo{journal}{Annals of Pure and Applied Logic}
  \bibinfo{volume}{163}, \bibinfo{pages}{68--84}.
\bibitem[{Gr{\"a}del and Hoelzel(2018)}]{GraHoe2018}
\bibinfo{author}{Gr{\"a}del, E.}, \bibinfo{author}{Hoelzel, M.},
  \bibinfo{year}{2018}.
\newblock \bibinfo{title}{{Dependency Concepts up to Equivalence}}, in:
  \bibinfo{editor}{Ghica, D.}, \bibinfo{editor}{Jung, A.} (Eds.),
  \bibinfo{booktitle}{27th EACSL Annual Conference on Computer Science Logic
  (CSL 2018)}, \bibinfo{publisher}{Schloss Dagstuhl--Leibniz-Zentrum fuer
  Informatik}, \bibinfo{address}{Dagstuhl, Germany}. pp.
  \bibinfo{pages}{25:1--25:21}.
\newblock \DOIprefix\doi{10.4230/LIPIcs.CSL.2018.25}.
\bibitem[{Gr{\"a}del and V{\"a}{\"a}n{\"a}nen(2013)}]{GraVaa2013}
\bibinfo{author}{Gr{\"a}del, E.}, \bibinfo{author}{V{\"a}{\"a}n{\"a}nen, J.},
  \bibinfo{year}{2013}.
\newblock \bibinfo{title}{Dependence and independence}.
\newblock \bibinfo{journal}{Studia Logica} \bibinfo{volume}{101},
  \bibinfo{pages}{399--410}.
\bibitem[{Halpern(2016)}]{Hal2016}
\bibinfo{author}{Halpern, J.}, \bibinfo{year}{2016}.
\newblock \bibinfo{title}{Actual Causality}.
\newblock \bibinfo{publisher}{MIT Press}.
\bibitem[{Halpern(2000)}]{Hal2000}
\bibinfo{author}{Halpern, J.Y.}, \bibinfo{year}{2000}.
\newblock \bibinfo{title}{Axiomatizing causal reasoning}.
\newblock \bibinfo{journal}{J. Artif. Int. Res.} \bibinfo{volume}{12},
  \bibinfo{pages}{317--337}.
\bibitem[{Hitchcock(2001)}]{Hit2001}
\bibinfo{author}{Hitchcock, C.}, \bibinfo{year}{2001}.
\newblock \bibinfo{title}{The intransitivity of causation revealed in equations
  and graphs}.
\newblock \bibinfo{journal}{The Journal of Philosophy} \bibinfo{volume}{98},
  \bibinfo{pages}{273--299}.
\bibitem[{Hodges(1997)}]{Hod1997}
\bibinfo{author}{Hodges, W.}, \bibinfo{year}{1997}.
\newblock \bibinfo{title}{Compositional semantics for a language of imperfect
  information}.
\newblock \bibinfo{journal}{Logic Journal of the IGPL} \bibinfo{volume}{5},
  \bibinfo{pages}{539--563}.
\bibitem[{Pearl(2000)}]{Pea2000}
\bibinfo{author}{Pearl, J.}, \bibinfo{year}{2000}.
\newblock \bibinfo{title}{Causality: Models, Reasoning, and Inference}.
\newblock \bibinfo{publisher}{Cambridge University Press},
  \bibinfo{address}{New York, NY, USA}.
\bibitem[{Santorio(2019)}]{San2019}
\bibinfo{author}{Santorio, P.}, \bibinfo{year}{2019}.
\newblock \bibinfo{title}{Interventions in premise semantics}.
\newblock \bibinfo{journal}{Philosophers' Imprint} \bibinfo{volume}{19},
  \bibinfo{pages}{1--27}.
\bibitem[{Schulz(2018)}]{Sch2018}
\bibinfo{author}{Schulz, K.}, \bibinfo{year}{2018}.
\newblock \bibinfo{title}{The similarity approach strikes back: Negation in
  counterfactuals}, in: \bibinfo{editor}{Sauerland, U.}, \bibinfo{editor}{Solt,
  S.} (Eds.), \bibinfo{booktitle}{Proceedings of Sinn und Bedeutung 22}, pp.
  \bibinfo{pages}{343--360}.
\bibitem[{Spirtes et~al.(1993)Spirtes, Glymour and Scheines}]{SpiGlySch1993}
\bibinfo{author}{Spirtes, P.}, \bibinfo{author}{Glymour, C.},
  \bibinfo{author}{Scheines, R.N.}, \bibinfo{year}{1993}.
\newblock \bibinfo{title}{Causation, Prediction, and Search}.
  volume~\bibinfo{volume}{81} of \textit{\bibinfo{series}{Lecture Notes in
  Statistics}}.
\newblock \bibinfo{publisher}{Springer New York}.
\bibitem[{Starr(2019)}]{Sta2019}
\bibinfo{author}{Starr, W.}, \bibinfo{year}{2019}.
\newblock \bibinfo{title}{Counterfactuals}, in: \bibinfo{editor}{Zalta, E.N.}
  (Ed.), \bibinfo{booktitle}{The Stanford Encyclopedia of Philosophy}.
  \bibinfo{edition}{fall 2019} ed.. \bibinfo{publisher}{Metaphysics Research
  Lab, Stanford University}.
\bibitem[{Strotz and Wold(1960)}]{StrWol1960}
\bibinfo{author}{Strotz, R.H.}, \bibinfo{author}{Wold, H.O.},
  \bibinfo{year}{1960}.
\newblock \bibinfo{title}{Recursive vs. nonrecursive systems: An attempt at
  synthesis (part i of a triptych on causal chain systems)}.
\newblock \bibinfo{journal}{Econometrica: Journal of the Econometric Society} ,
  \bibinfo{pages}{417--427}.
\bibitem[{V{\"a}{\"a}n{\"a}nen(2007)}]{Vaa2007}
\bibinfo{author}{V{\"a}{\"a}n{\"a}nen, J.}, \bibinfo{year}{2007}.
\newblock \bibinfo{title}{{D}ependence {L}ogic: A New Approach to
  {I}ndependence {F}riendly Logic}. volume~\bibinfo{volume}{70} of
  \textit{\bibinfo{series}{London Mathematical Society Student Texts}}.
\newblock \bibinfo{publisher}{Cambridge University Press}.
\bibitem[{V{\"a}{\"a}n{\"a}nen(2008)}]{Vaa2008}
\bibinfo{author}{V{\"a}{\"a}n{\"a}nen, J.}, \bibinfo{year}{2008}.
\newblock \bibinfo{title}{Modal dependence logic}, in: \bibinfo{editor}{Apt,
  K.}, \bibinfo{editor}{van Rooij, R.} (Eds.), \bibinfo{booktitle}{New
  Perspectives on Games and Interaction}. \bibinfo{publisher}{Amsterdam
  University press}. volume~\bibinfo{volume}{5} of
  \textit{\bibinfo{series}{Texts in Logic and Games}}, pp.
  \bibinfo{pages}{237--254}.
\bibitem[{Woodward(2003)}]{Woo2003}
\bibinfo{author}{Woodward, J.}, \bibinfo{year}{2003}.
\newblock \bibinfo{title}{Making Things Happen}. volume \bibinfo{volume}{114}
  of \textit{\bibinfo{series}{Oxford Studies in the Philosophy of Science}}.
\newblock \bibinfo{publisher}{Oxford University Press}.
\bibitem[{Yang(2017)}]{Yan2017}
\bibinfo{author}{Yang, F.}, \bibinfo{year}{2017}.
\newblock \bibinfo{title}{Uniform definability in propositional dependence
  logic}.
\newblock \bibinfo{journal}{The Review of Symbolic Logic} \bibinfo{volume}{10},
  \bibinfo{pages}{65--79}.
\bibitem[{Yang and V{\"a}{\"a}n{\"a}nen(2016)}]{YanVaa2016}
\bibinfo{author}{Yang, F.}, \bibinfo{author}{V{\"a}{\"a}n{\"a}nen, J.},
  \bibinfo{year}{2016}.
\newblock \bibinfo{title}{Propositional logics of dependence}.
\newblock \bibinfo{journal}{Annals of Pure and Applied Logic}
  \bibinfo{volume}{167}, \bibinfo{pages}{557 -- 589}.
\newblock \DOIprefix\doi{http://dx.doi.org/10.1016/j.apal.2016.03.003}.
\bibitem[{Yang and V{\"a}{\"a}n{\"a}nen(2017)}]{YanVaa2017}
\bibinfo{author}{Yang, F.}, \bibinfo{author}{V{\"a}{\"a}n{\"a}nen, J.},
  \bibinfo{year}{2017}.
\newblock \bibinfo{title}{Propositional team logics}.
\newblock \bibinfo{journal}{Annals of Pure and Applied Logic}
  \bibinfo{volume}{168}, \bibinfo{pages}{1406 -- 1441}.
\newblock \DOIprefix\doi{http://dx.doi.org/10.1016/j.apal.2017.01.007}.
\bibitem[{Zhang(2013)}]{Zha2013}
\bibinfo{author}{Zhang, J.}, \bibinfo{year}{2013}.
\newblock \bibinfo{title}{A lewisian logic of causal counterfactuals}.
\newblock \bibinfo{journal}{Minds and Machines} \bibinfo{volume}{23},
  \bibinfo{pages}{77--93}.

\end{thebibliography}

\end{document}